\numberwithin{equation}{section}
\numberwithin{figure}{section}
\theoremstyle{plain}
\newtheorem{thm}{\protect\theoremname}[section]
  \theoremstyle{remark}
  \newtheorem{rem}[thm]{\protect\remarkname}
  \theoremstyle{plain}
  \newtheorem{lem}[thm]{\protect\lemmaname}
  \theoremstyle{definition}
\newcommand{\Rmnum}[1]{\expandafter\@slowromancap\romannumeral#1@}\makeatother
\numberwithin{equation}{section}
\newcommand{\set}[1]{\left\{#1\right\}}
\newcommand{\defs}{:=}
\DeclareMathOperator{\di}{div}
\DeclareMathOperator*{\dist}{dist}
\newcommand{\dif}{\mathrm{d}}
\DeclareSymbolFont{lettersA}{U}{pxmia}{m}{it}
\DeclareMathSymbol{\piup}{\mathord}{lettersA}{"19}
\newcommand{\Real}{\mathbb R}
\newcommand{\mr}[1]{\mathrm{#1}}
\newcommand{\mb}[1]{\mathbf{#1}}
\newcommand{\mcc}{\mathcal{C}}
\newcommand{\mcp}{\mathcal{P}}
\newcommand{\mcr}{\mathcal{R}}
  \providecommand{\definitionname}{Definition}
  \providecommand{\lemmaname}{Lemma}
  \providecommand{\remarkname}{Remark}
\providecommand{\theoremname}{Theorem}
\begin{document}

\title[]{On admissible positions of Transonic Shocks for Steady Euler Flows In A 3-D Axisymmetric Cylindrical Nozzle}

\author{Beixiang Fang}

\author{Xin Gao}

\address{B.X. Fang: School of Mathematical Sciences, MOE-LSC, and SHL-MAC, Shanghai
Jiao Tong University, Shanghai 200240, China }

\email{\texttt{bxfang@sjtu.edu.cn}}

\address{X. Gao: School of Mathematical Sciences, Shanghai
	Jiao Tong University, Shanghai 200240, China }

\email{\texttt{sjtu2015gx@sjtu.edu.cn}}

\keywords{steady Euler system; axisymmetric nozzles; transonic shocks; receiver pressure; existence;}
\subjclass[2010]{35A01, 35A02, 35B20, 35B35, 35B65, 35J56, 35L65, 35L67, 35M30, 35M32, 35Q31, 35R35, 76L05, 76N10}

\date{\today}

\begin{abstract}

This paper concerns with the existence of transonic shocks for steady Euler flows in a 3-D axisymmetric cylindrical nozzle, which are governed by the Euler equations with the slip boundary condition on the wall of the nozzle and a receiver pressure at the exit. Mathematically, it can be formulated as a free boundary problem with the shock front being the free boundary to be determined. In dealing with the free boundary problem, one of the key points is determining the position of the shock front. To this end, a free boundary problem for the linearized Euler system will be proposed, whose solution gives an initial approximating position of the shock front. Compared with 2-D case, new difficulties arise due to the additional 0-order terms and singularities along the symmetric axis. New observation and careful analysis will be done to overcome these difficulties. Once the initial approximation is obtained, a nonlinear iteration scheme can be carried out, which converges to a transonic shock solution to the problem.

\end{abstract}

\maketitle
\tableofcontents{}

\section{Introduction}

In this paper, we are concerned with the existence of steady transonic shocks, especially the position of the shock front, in a 3-D axisymmetric cylindrical nozzle (see Figure \ref{fig:1}). The steady flow in the nozzle is supposed to be governed by the Euler system which reads:\begin{align}
&\di(\rho \mathbf{u})=0,\label{eq1}\\
&\di(\rho \mathbf{u}\otimes \mathbf{u})+\nabla p=0,\label{eq2}\\
&\di(\rho(e+\frac12 |\mathbf{u}|^2+\displaystyle\frac{p}{\rho})\mathbf{u})=0,\label{eq3}
\end{align}
where  $\mb{x}\defs(x_1,x_2,x_3)$ are the space variables, ``$\di $'' is the divergence operator with respect to $\mb{x}$, $\mathbf{u}=(u_1,u_2,u_3)^T $ is the velocity field, $\rho$, $p$ and $e$ stand for the density, pressure, and the internal energy respectively. Moreover, the fluid is supposed to be a polytropic gas with the state equation
\[
	p=A(s){\rho}^{\gamma},
\]
where $s$ is the entropy, $\gamma>1$ is the adiabatic exponent, and $A(s)=(\gamma-1)\exp(\displaystyle\frac{s-s_0}{c_v})$ with $c_v$ the specific heat at constant volume.

\begin{figure}[!h]
	\centering
	\includegraphics[width=0.45\textwidth]{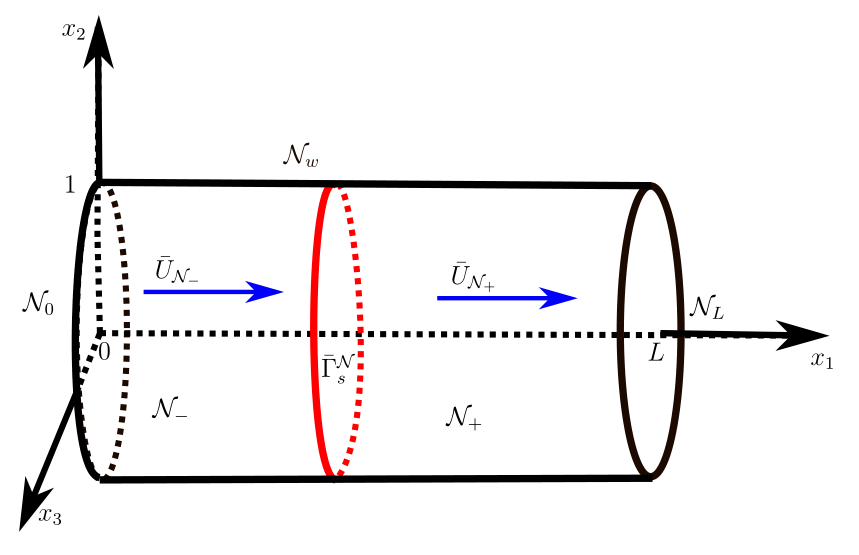}
	\caption{The transonic shock flows in the cylindrical nozzle.\label{fig:1}}
\end{figure}

It is well-known, in the Euler system, the equation $\eqref{eq3}$ can be replaced by the Bernoulli's law below:
\begin{equation}
\di (\rho \mathbf{u} B)=0,
\end{equation}
where the Bernoulli constant $B$ is given by, with $i = \displaystyle\frac{\gamma p}{(\gamma -1)\rho}$ being the enthalpy,
\begin{equation}
B =\displaystyle\frac12 |\mathbf{u}|^2 + i.
\end{equation}

For Euler flows, a shock front is a strong discontinuity of the distribution functions of the state parameters $U_{\mathcal {N}} = (u_1, u_2, u_3, p,\rho)^T$ of the fluid. Let $\Gamma_{\mr{s}}^{\mathcal {N}} \defs\set{x_1 = \psi_{\mathcal {N}}(x')}$, with $ x'\defs (x_2,x_3)$, be the position of a shock front, then the following Rankine-Hugoniot(R-H) conditions should be satisfied:
\begin{align}
&[(1 , - \nabla_{x'}\psi_{\mathcal {N}}(x'))\cdot \rho \mathbf{u}] = 0,\label{eq810}\\
&[((1 , - \nabla_{x'} \psi_{\mathcal {N}} (x'))\cdot \rho \mathbf{u})\mathbf{u}] + (1 , - \nabla_{x'}\psi_{\mathcal {N}}(x'))^T [p] = 0,\\
&[(1 , - \nabla_{x'} \psi_{\mathcal {N}}(x'))\cdot \rho \mathbf{u} B] = 0,\label{eq813}
\end{align}
where $[\cdot]$ denotes the jump of the quantity across the shock front $ \Gamma_{\mr{s}}^{\mathcal {N}} $, and $\nabla_{x'} \defs (\partial_{x_2}, \partial_{x_3})$. Moreover, the entropy condition $ [p]>0 $ should also hold, which means that the pressure increases across  $ \Gamma_{\mr{s}}^{\mathcal {N}} $.

\subsection{Steady plane normal shocks in a flat cylindrical nozzle.}

Given a flat cylindrical nozzle:
\begin{equation}
\mathcal {N}: = \left\{(x_1,x_2,x_3)\in {\mathbb{R}}^3: 0 < x_1 < L,\  0 \leq x_2^2 + x_3^2  < 1\right\},
\end{equation}
with the entrance $ \mathcal{N}_0 $, the exit $ \mathcal{N}_L $, and the wall $ \mathcal{N}_w $ being
\begin{equation*}
\begin{split}
\mathcal{N}_0 &\defs \mathcal{N} \cap \{(x_1,x_2,x_3)\in {\mathbb{R}}^3 : x_1 = 0\},\\
\mathcal{N}_L &\defs \mathcal{N} \cap \{(x_1,x_2,x_3)\in {\mathbb{R}}^3 : x_1 = L \}, \\
\mathcal{N}_w &\defs \mathcal{N} \cap \{(x_1 ,x_2 ,x_3 )\in {\mathbb{R}}^3 : x_2^2 + x_3^2 = 1\},
\end{split}
\end{equation*}
it is well-known that there may exist plane normal shocks in it with the shock front being a plane perpendicular to the $ x_1 $-axis (see Figure \ref{fig:1}).
Let $x_1 = \bar{x}_{\mr{s}}$ with $\bar{x}_{\mr{s}} \in (0,L)$ be the position of the plane shock front,  $\bar{U}_{\mathcal {N}_-}\defs(\bar{q}_-,0,0,\bar{p}_-,\bar{\rho}_-)$ be the state of the uniform supersonic flow ahead of it, and $\bar{U}_{\mathcal {N}_+}\defs(\bar{q}_+,0,0,\bar{p}_+,\bar{\rho}_+)$ be the state of the uniform subsonic flow behind it. (In this paper, the subscript ``$ - $'' will be used to denote the parameters ahead of the shock front and the subscript ``$ + $'' behind the shock front.) Then the R-H conditions $\eqref{eq810}$-$\eqref{eq813}$ yield
\begin{equation}\label{eq20}
\left\{
\begin{aligned}
&[\bar{\rho} \bar{q}] = \bar{\rho}_{+} \bar{q}_{+} - \bar{\rho}_{-} \bar{q}_{-}=0,\\
&[\bar{p}+\bar{\rho} \bar{q}^2] =  (\bar{p}_{+}+\bar{\rho}_{+} \bar{q}_{+}^2) - (\bar{p}_{-}+\bar{\rho}_{-} \bar{q}_{-}^2)=0,\\
&[\bar{B}] =  \bar{B}_{+} - \bar{B}_{-}=0.
\end{aligned}
\right.
\end{equation}
Then direct computations yield that
\begin{align}
&\bar{p}_+=\Big((1+{\mu}^2)\bar{M}_-^2-{\mu}^2\Big)\bar{p}_-,\label{eq39}\\
&\bar{q}_+  = {\mu}^2\Big(\bar{q}_- + \frac{2}{\gamma+1}\frac{\bar{c}_{-}^2}{\bar{q}_-}\Big),\\
&\bar{\rho}_+ =\frac{{\bar{\rho}_- \bar{q}_-}}{\bar{q}_+}=\frac{{\bar{\rho}_- \bar{q}_-^2}}{\bar{c}_{*}^2}\label{eqqe},
\end{align}
where $c = \sqrt{\displaystyle\frac{\gamma p}{\rho}}$ is the sonic speed, $M = \displaystyle\frac{q}{c}$ is the Mach number, and
\begin{equation}
\begin{aligned}
\bar{c}_{*}^2 := \bar{q}_+ \bar{q}_- = {\mu}^2\Big(\bar{q}_-^2 + \frac{2}{\gamma+1}\bar{c}_-^2\Big),\,\,\,{\mu}^2=\frac{\gamma-1}{\gamma+1},\,\,\,
\bar{M}_-^2 = \displaystyle\frac{\bar{q}_-^2}{\bar{c}_-^2}.
\end{aligned}
\end{equation}

\begin{rem}
	By applying the entropy condition $ [\bar{p}]>0 $, the equation \eqref{eq39} yields that $ \bar{M}_->1 $, that is, $\bar{q}_-> \bar{c}_{*}$. Then, since $\bar{q}_+ \bar{q}_- = \bar{c}_{*}^2$, it is obvious that $\bar{q}_+ < \bar{c}_{*}$, which implies that  $ \bar{M}_+ <1 $. 	
	That is, the flow is supersonic ahead of the shock front, and is subsonic behind it.
\end{rem}

\begin{rem}\label{rem:plane_normal_shocks}
	For each $\bar{x}_{\mr{s}} \in (0,L)$, it is obvious that $ (\bar{U}_{\mathcal {N}_+};\ \bar{U}_{\mathcal {N}_-};\ \bar{x}_{\mr{s}}) $ gives a plane transonic shock solution to the steady Euler system \eqref{eq1}-\eqref{eq3} in the following sense: the position of the shock front is $ \bar{\Gamma}_{{\mr{s}}}^{\mathcal {N}}\defs\set{x_{1}=\bar{x}_{\mr{s}},\ x_2^2 + x_3^2  \leq 1} $, the state of the fluid $ {U}_{\mathcal {N}}\equiv\bar{U}_{\mathcal {N}_-} $ within the region between $ \mathcal {N}_0 $ and $ \bar{\Gamma}_{{\mr{s}}}^{\mathcal {N}}  $ in the nozzle $ \mathcal {N} $, and the state of the fluid  $ {U}_{\mathcal {N}}\equiv\bar{U}_{\mathcal {N}_+} $ within the region between $ \bar{\Gamma}_{{\mr{s}}}^{\mathcal {N}} $ and $ \mathcal {N}_L $.
	Therefore, as the state $ \bar{U}_{\mathcal {N}_-} $ is given, the state of the flow behind the shock front $ \bar{U}_{\mathcal {N}_+} $ is uniquely determined by \eqref{eq39}-\eqref{eqqe}, while the position of the plane shock front could be arbitrary in the flat nozzle $ \mathcal {N} $.
\end{rem}

\subsection{The steady transonic shock problem in an axisymmetric perturbed nozzle.}

Based on the above steady plane normal shock solutions in a flat cylindrical nozzle, this paper is going to study the existence of transonic shocks in an axisymmetric 3-D nozzle, which is a small perturbation of a flat cylindrical nozzle, with a pressure condition at the exit.

Let $ (z, r, \varpi) $ be the cylindrical coordinate in $ \Real^{3} $ such that
\begin{align*}
  (x_1, x_2, x_3) = (z, r \cos \varpi,r\sin \varpi),
\end{align*}
 and the perturbed nozzle is axisymmetric with respect to $ x_1 $-axis:
\begin{equation}
\widetilde{\mathcal {N}}: = \{(z,r,\varpi)\in {\mathbb{R}}^3: 0 < z < L,\  0< r  < 1 + \varphi(z)\},
\end{equation}
where, with $ \sigma>0 $ sufficiently small and $ \Theta(\cdot) $ a given function defined on $ [0,\ L] $,
\begin{equation}
\varphi(z) := \int_{0}^{z} \tan(\sigma \Theta(\tau))\dif \tau.
\end{equation}

In this paper, we further assume that the states of the fluid in the nozzle are also axisymmetric  with respect to $ x_1 $-axis such that $ U_{\mathcal {N}} $ are independent of $ \varpi $, and
\[
	-u_2 \sin \varpi + u_3 \cos \varpi = 0.
\]
Then direct computations yield that the 3-D steady Euler equations $\eqref{eq1}$-$\eqref{eq3}$ are reduced to
\begin{equation}\label{eq536}
\left\{
\begin{array}{l}
\partial_z (r \rho u) + \partial_r(r\rho v) = 0,\\
\\
\partial_z (\rho u v) + \partial_r (p+\rho v^2) + \displaystyle\frac{\rho v^2}{r} = 0,\\
\\
\partial_z (p+ \rho u^2) + \partial_r (\rho u v) + \displaystyle\frac{\rho u v }{r} = 0,\\
\\
\partial_z (r \rho u B ) +\partial_r (r \rho v B) =0,
\end{array}
\right.
\end{equation}
where
\begin{equation}
u = u_1,\,\,\,v = u_2 \cos \varpi + u_3 \sin \varpi.
\end{equation}
Hence, it suffices to determine $U \defs (\theta,p,q,s)$, with $\theta = \arctan\displaystyle\frac{v}{u}$ the flow angle and $q = \sqrt{u^2 + v^2}$ the magnitude of the flow velocity, as functions of variables $ (z,r) $.

Moreover, under the cylindrical coordinate, the position of a shock front which is axisymmetric with respect to $ x_1 $-axis can be denoted as $ \set{z=\psi_{\widetilde{\mathcal {N}}}(r)}  $, and the Rankine-Hugoniot conditions $\eqref{eq810}$-$\eqref{eq813}$ across it become
\begin{align}
&[\rho u]-\psi_{\widetilde{\mathcal {N}}}^{'}[\rho v]=0,\label{eq889}\\
&[\rho uv]-\psi_{\widetilde{\mathcal {N}}}^{'}[p+\rho v^2]=0,\\
&[p+\rho u^2]-\psi_{\widetilde{\mathcal {N}}}^{'}[\rho u v]=0,\\
&[B]=0.\label{eq899}
\end{align}

In the $ (z,r) $-space, let
\begin{equation}\label{eq801}
\widetilde{\mathcal{N}}:  = \{ (z, r)\in \mathbb{R}^2 : 0 < z < L,\ 0 < r < 1+\varphi(z)\},
\end{equation}
denotes the domain of the nozzle with the boundaries
\begin{equation}
\begin{aligned}
&E_0 = \{ (z, r)\in \mathbb{R}^2 :  z = 0,\,\,\, 0 < r < 1+ \varphi(0) \},\\
&W_0 = \{ (z, r)\in \mathbb{R}^2 : 0 < z < L,\,\,\,  r = 0\},\\
&E_L= \{ (z, r)\in \mathbb{R}^2 :  z = L,\,\,\, 0 < r < 1+\varphi(L)\},\\
&W_{\varphi} = \{ (z, r)\in \mathbb{R}^2 : 0 < z < L,\,\,\,  r = 1+\varphi(z)\},
\end{aligned}
\end{equation}
in which $ E_0 $ is the entry, $ E_L  $ is the exit, $ W_{\varphi} $ is the nozzle wall, and $W_0 $ denotes the symmetry axis of the nozzle.
In this paper, we are going to determine the steady flow pattern with a single shock front in $ \widetilde{\mathcal{N}}$, satisfying the slip boundary condition on the nozzle wall $ W_{\varphi} $, for given supersonic states at the entry $ E_0 $ and given pressure condition at the exit $ E_L $.
The problem is formulated as a free boundary problem described in detail below.

\vskip 0.5cm

{\bf The Free Boundary Problem {$\textit{FBPC}$}}

 \begin{figure}[!h]
	\centering
	\includegraphics[width=0.45\textwidth]{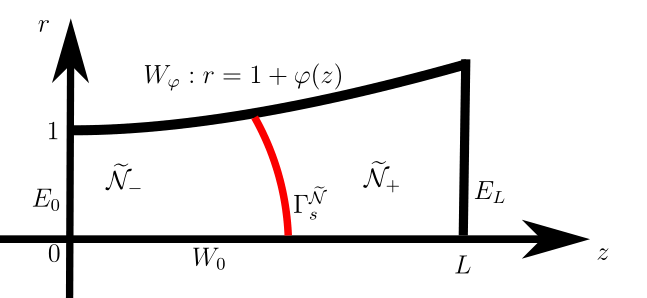}
	\caption{The shock front in the cylindrical coordinate.\label{fig:2}}
\end{figure}

Let $\alpha\in(\frac12,1)$. Given $ \bar{U}_-\defs (0,\bar{p}_-,\bar{q}_-,\bar{s}_-) $, $P_{\mr{e}} \in \mcc^{2,\alpha}(\bar{\mathbb{R}}_+)$, $\Theta \in \mcc^{2,\alpha}([0,L])$, try to determine the states of the fluid $ U $ in the nozzle with a single shock front  $\Gamma_{\mr{s}}^{\widetilde{\mathcal {N}}}\defs \{z = \psi_{\widetilde{\mathcal {N}}}(r)\}$ ( see Figure \ref{fig:2}) such that:
\begin{enumerate}
	\item The nozzle domain $ \widetilde{\mathcal{N}} $ is divided by $ \Gamma_{\mr{s}}^{\widetilde{\mathcal {N}}} $ into two parts:
	\begin{align}
	& \widetilde{\mathcal{N}} _- = \{ (z, r)\in \mathbb{R}^2 : 0 < z < \psi_{\widetilde{\mathcal {N}}} (r), \ 0 < r < 1+ \varphi(z)\},\\
	& \widetilde{\mathcal{N}} _+ = \{ (z, r)\in \mathbb{R}^2 : \psi_{\widetilde{\mathcal {N}}}(r) < z < L,\ 0 < r < 1+ \varphi(z)\},
	\end{align}
	where $ \widetilde{\mathcal{N}} _- $ denotes the region of the supersonic flow ahead of the shock front, and $ \widetilde{\mathcal{N}} _+ $ is the region of the subsonic flow behind it;
	
	\item In $ \widetilde{\mathcal{N}}_- $, the states of the fluid  $ U =  U_-(z,r) $, which satisfies the Euler system  $\eqref{eq536}$, given supersonic state at the entry of the nozzle
	\begin{equation}\label{eq808}
	U_-= \bar{U}_-,\quad \text{on} \quad  E_0,
	\end{equation}
	and the slip boundary condition on the wall of the nozzle
	\begin{equation}\label{eq909}
	\theta_- = \sigma \Theta(z), \quad \text{on} \quad W_{\varphi}\cap\overline{\widetilde{\mathcal{N}}_-};
	\end{equation}
	
	\item In $ \widetilde{\mathcal{N}}_+$, the states of the fluid  $ U=  U_+(z,r) $, which satisfies the Euler system  $\eqref{eq536}$, the slip boundary condition on the wall of the nozzle
	\begin{equation}\label{eq911}
	\theta_+ = \sigma \Theta(z), \quad\text{on} \quad W_{\varphi}\cap\overline{\widetilde{\mathcal{N}}_+},
	\end{equation}
	and given pressure at the exit of the nozzle
	\begin{equation}\label{eq910}
	p_+ = p_{\mr{e}}(L,r) :=  \bar{p}_+ +  \sigma P_{\mr{e}} (r),\quad \text{on}\quad E_L;
	\end{equation}
	
	\item On the shock front $ \Gamma_{\mr{s}}^{\widetilde{\mathcal {N}}} $, the Rankine-Hugoniot conditions $\eqref{eq889}$-$\eqref{eq899}$ hold for the states $(U_- , U_+)$;
	
	\item Finally, on the axis $ \Gamma_{2}$, under the assumption of axisymmetric, both $U_-$ and $U_+$ satisfy
	\begin{equation}\label{eq45}
	\theta_- = 0 , \quad  \partial_r (p_-, q_-, s_-)=0, \quad \partial_r^2 \theta_- = 0, \quad \text{on} \quad W_0\cap\overline{\widetilde{\mathcal{N}}_-},
	\end{equation}
	and
	\begin{equation}\label{eq46}
	\theta_+ = 0,\quad  \partial_r (p_+, q_+, s_+) =0, \quad \partial_r^2 \theta_+ = 0,\quad \text{on} \quad W_0\cap\overline{\widetilde{\mathcal{N}}_+}.
	\end{equation}
\end{enumerate}
\begin{rem}
$\alpha\in (\frac12,1)$ is a sufficient condition, which is needed to establish a prior estimates of the solution. One is referred to Section 3 for details.
\end{rem}

\begin{rem}
The condition \eqref{eq45} guarantees that the compatibility conditions hold for the supersonic solution (see Lemma \ref{lem1} in Section 4). The condition \eqref{eq46} will be verified in Section 3 - Section 6.
\end{rem}
\vskip 0.5cm

This paper will deal with the problem {\bf {$\textit{FBPC}$}} and establish the existence of the transonic shock solution in the 3-D axisymmetric nozzle by showing the following theorem.

\begin{thm}\label{thm:ExistenceNozzleShocks}
	Assume that
	\begin{equation}\label{eq:assumption_001}
		\Theta(z) > 0, \quad \text{for any } z\in(0,L),
	\end{equation}
	and
	\begin{equation}\label{eq20000}
	\Theta(0) = \Theta'(0) = \Theta''(0) = 0.
	\end{equation}

	Let
	\begin{align}
	\mathcal{R}(z) : =& \int_{0}^{L} \Theta(\tau) \dif\tau  - \dot{k}\int_0^{z} \Theta(\tau)\dif\tau, \label{eq:criterion_function}\\
	\mathcal{P}_{\mr{e}} :=& 2\displaystyle\frac{1 - \bar{M}_+^2}{\bar{\rho}_+^2 \bar{q}_+^3} \int_0^{1}  t P_{\mr{e}}(t) \dif t, \label{eq:criterion_pressure_exit}
	\end{align}
 with $\dot{k} : = [\bar{p}]\Big(\displaystyle\frac{\gamma -1}{\gamma \bar{p}_+} + \displaystyle\frac{1}{\bar{\rho}_+ \bar{q}_+^2}\Big) >0$,
	such that
	\begin{equation}\label{eq:85}
	\begin{split}
		&\mathcal{R}^*\defs \sup\limits_{z\in(0,L)}\mathcal{R} (z) = \int_{0}^{L} \Theta(\tau) \dif\tau,\\
		&\mathcal{R}_*\defs \inf\limits_{z\in(0,L)}\mathcal{R}(z) = (1-\dot{k})\int_{0}^{L} \Theta(\tau) \dif\tau.
	\end{split}
	\end{equation}

	Then, if
	\begin{equation}\label{eq:067}
	\mathcal{R}_* < \mcp_{\mr{e}} < \mathcal{R}^*,
	\end{equation}
	then there exists a sufficiently small constant $ \sigma_0 > 0 $, such that for any $ 0<\sigma\leq\sigma_0 $, there exists at least a transonic shock solution $ (U_- , U_+;\ \psi_{\widetilde{\mathcal {N}}}(r)) $ to the free boundary problem {\bf {$\textit{FBPC}$}}.	
\end{thm}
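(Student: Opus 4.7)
The plan is to reduce the free boundary problem \textbf{\textit{FBPC}} to two subproblems: first, a hyperbolic supersonic problem in $\widetilde{\mathcal{N}}$ whose solution is $O(\sigma)$-close to the constant state $\bar U_-$; second, a free boundary problem for the Euler system in the subsonic region $\widetilde{\mathcal{N}}_+$, to be solved by a linearization that determines an approximate shock position $z^\ast$, followed by a nonlinear iteration converging to a genuine transonic shock solution.

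\textbf{Step 1: the supersonic state.} Using \eqref{eq20000} at the corner $W_\varphi\cap E_0$ together with the $C^{2,\alpha}$ regularity of $\Theta$, classical characteristics methods for the quasilinear symmetric hyperbolic system \eqref{eq536} in $\widetilde{\mathcal{N}}$ with boundary data \eqref{eq808}-\eqref{eq909} yield an axisymmetric $C^{2,\alpha}$ supersonic solution $U_-$ on the entire nozzle satisfying the symmetry conditions \eqref{eq45} and the estimate $\|U_--\bar U_-\|_{C^{2,\alpha}}\lesssim \sigma$. This reduces \textbf{\textit{FBPC}} to finding $(\psi_{\widetilde{\mathcal{N}}}, U_+)$ in the subsonic region, with $U_-$ regarded as known.

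\textbf{Step 2: linearization and the determination of $z^\ast$.} Write $\psi_{\widetilde{\mathcal{N}}}(r)=z^\ast+\sigma\eta(r)$ for some $z^\ast\in(0,L)$ to be fixed, and $U_+=\bar U_{\mathcal{N}_+}+\sigma U_1+o(\sigma)$. Substituting into \eqref{eq536} and retaining only the linear order in $\sigma$ gives an elliptic-hyperbolic linear system for $U_1=(\theta_1,p_1,q_1,s_1)$ on the fixed rectangle $(z^\ast,L)\times(0,1)$; similarly the R--H relations \eqref{eq889}-\eqref{eq899} linearize to algebraic/differential coupling between $\eta$, $U_1$, and the known supersonic perturbation on $\{z=z^\ast\}$, while \eqref{eq910}-\eqref{eq911} yield the boundary data at $E_L$ and $W_\varphi$. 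The resulting linear free boundary problem is overdetermined: the Bernoulli and entropy equations are transport equations along the subsonic streamlines, whose integrals from $\{z=z^\ast\}$ to $E_L$ against the linearized R--H identities and an integration of the pressure equation in $r$ produce a single scalar compatibility identity, which after simplification becomes exactly $\mathcal{P}_{\mathrm e}=\mathcal{R}(z^\ast)$. Hypothesis \eqref{eq:assumption_001} makes $\mathcal{R}$ strictly decreasing on $[0,L]$, giving the range $[\mathcal{R}_\ast,\mathcal{R}^\ast]$ in \eqref{eq:85}; hence \eqref{eq:067} guarantees the existence of an interior $z^\ast\in(0,L)$ with $\mathcal{R}(z^\ast)=\mathcal{P}_{\mathrm e}$, and this $z^\ast$ serves as the leading-order position of the shock front.

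\textbf{Step 3: solving the linearized problem and nonlinear iteration.} With $z^\ast$ fixed by the solvability criterion, the linear problem can be reduced to a second-order elliptic equation for $p_1$ (or equivalently for a stream function) on $(z^\ast,L)\times(0,1)$, with mixed Neumann-type data on $E_L$, $W_0$, $W_\varphi$, $\{z=z^\ast\}$ derived from the linearized R--H conditions and the exit pressure. The remaining components of $U_1$ and the correction $\eta(r)$ are then recovered from the transport equations and the linearized R--H system. Finally, the nonlinear problem is solved by a fixed-point iteration in $C^{2,\alpha}$: at each step $n$, given $(\psi^{(n)},U_+^{(n)})$, solve the linearized problem of Step~2 around them (with the shock position shifted by a small correction determined by the same integral solvability identity, evaluated on the current approximation) to produce $(\psi^{(n+1)},U_+^{(n+1)})$. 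Strictness of \eqref{eq:067} ensures that the perturbed criterion remains solvable at each iterate, and a standard contraction estimate in a closed ball of radius $O(\sigma)$ around the linear approximation closes the argument for $\sigma\le\sigma_0$.

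\textbf{Main obstacles.} The principal new difficulty compared with the 2-D setting, flagged in the abstract, is the $1/r$-singular zeroth-order terms $\rho v^2/r,\ \rho uv/r$ in \eqref{eq536} along the axis $W_0$. These terms contribute non-trivially to the linearized system and can obstruct both the elliptic estimate for $p_1$ and the contraction estimate in the iteration. The assumption $\alpha>1/2$ and the axis symmetry conditions \eqref{eq45}-\eqref{eq46} (which force $\theta/r$ and $v/r$ to be controlled at the axis) are what allow one to work in weighted $C^{2,\alpha}$ spaces where these singular terms are treated as lower-order perturbations. The secondary obstacle is verifying that the integral identity coming from the compatibility at each iteration is stable under the perturbation; this is the reason the criterion $\mathcal{R}_\ast<\mathcal{P}_{\mathrm e}<\mathcal{R}^\ast$ must be a strict inequality rather than just solvability of $\mathcal{R}(z^\ast)=\mathcal{P}_{\mathrm e}$.
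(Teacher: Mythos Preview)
Your high-level strategy matches the paper's: solve the supersonic problem first, linearize the subsonic free boundary problem, extract a scalar solvability condition that reads $\mathcal{R}(z^\ast)=\mathcal{P}_{\mathrm e}$ and pins down the approximate shock position, then close by a contraction iteration. You also correctly identify the two main obstacles (the $1/r$ singular lower-order terms along the axis, and the need for strict inequality in \eqref{eq:067} so that the solvability criterion survives perturbation).

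Where your proposal diverges from the paper is in the technical machinery used to execute this plan, and the differences are not merely cosmetic. First, the paper does \emph{not} work in the cylindrical $(z,r)$ coordinates: it applies a modified Lagrange transformation (Section~2.1) that straightens streamlines, trivializes the transport equations for entropy and Bernoulli, and turns the free shock front into a graph over a fixed interval. Your Step~3, where you ``recover the remaining components from the transport equations,'' is considerably more delicate in physical coordinates because the streamlines are unknown and curved; the Lagrange change of variables is what makes this step routine. Second, your treatment of the axis singularity (``work in weighted $C^{2,\alpha}$ spaces where these singular terms are lower-order perturbations'') understates what is actually needed. The paper's core analytic contribution is Theorem~3.1: the linearized elliptic sub-system for $(\theta,p)$ is recast, via potential functions and the substitution $\Phi=y\tilde\Phi$, as a five-dimensional Laplace equation (and, for the other potential, a three-dimensional one), so that the $1/\eta$ and $1/\eta^2$ coefficients are absorbed into a higher-dimensional radial Laplacian with no singularity. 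The required H\"older estimates near the axis then follow from standard interior/boundary regularity for the lifted problem, with the condition $\alpha>\tfrac12$ entering to make $\mathbf{F}_1\in L^2$ (see \eqref{hh}). This dimension-lifting trick is the mechanism by which the axis conditions \eqref{eq46} are verified, not assumed; your proposal does not supply a comparable device, and simply invoking weighted spaces would leave the well-posedness of the elliptic step, and hence the derivation of the solvability identity, unjustified.

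In short: your outline is the right one, but the two load-bearing technical ingredients---the modified Lagrange coordinates and the higher-dimensional reformulation of the singular elliptic problem---are precisely what the paper supplies and your proposal leaves as black boxes.
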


\begin{rem}
	In Theorem \ref{thm:ExistenceNozzleShocks}, the assumption \eqref{eq:assumption_001} is imposed in order to the simplicity of the presentation of this paper. The existence of the transonic shock solutions can also be established for general $ \Theta(\cdot) $ if \eqref{eq:067} holds and there exists $ z_*\in(0,L) $ such that
	\begin{equation}
		\mcr(z_*) = \mcp_{\mr{e}},\quad\text{and }\quad\Theta(z_*)\not=0.
	\end{equation}
	Actually, for general $ \Theta(\cdot) $, similar as the existence of transonic shock solutions in a 2-D nozzle established in \cite{FB63}, there may exist more than one transonic shock solutions to the free boundary problem {\bf {$\textit{FBPC}$}}.
\end{rem}

In dealing with the free boundary problem {\bf {$\textit{FBPC}$}}, one of the key difficulties is determining the position of the shock front.
However, there is no information on it since the problem {\bf {$\textit{FBPC}$}} is going to be solved near the steady plane normal shock solutions and, as pointed out in Remark \ref{rem:plane_normal_shocks}, the position of the shock front can be arbitrary in the flat nozzle.
This difficulty also arises for 2-D transonic shock problem in an almost flat nozzle and Fang and Xin successfully overcome it in \cite{FB63} by designing a free boundary problem of the linearized 2-D Euler system based on the background normal shock solution, which provides information on the position of the shock front as long as it is solvable: the free boundary can be regarded as an initial approximating position of the shock front.
It turns out that this idea also works for the 3-D axisymmetric case studied in this paper, and an initial approximating position of the shock front can be obtained by solving the free boundary problem of the linearized 3-D axisymmetric Euler system based on the plane normal shocks (see Section 4 for details).
Different from the problem for 2-D case in \cite{FB63}, there exist additional 0-order terms and singularities along the symmetric axis in the linearized Euler system for 3-D axisymmetric case. These differences will bring new difficulties in solving the free boundary problem and determining the position of the free boundary.
They need further observation and careful analysis which will be done in this paper.
Once the initial approximation of the transonic shock solution is obtained, nonlinear iteration process similar as in \cite{FB63} can be executed which converges to a transonic shock solution to the problem {\bf {$\textit{FBPC}$}}.

The study on gas flows with shocks in a nozzle plays a fundamental role in the operation of turbines, wind tunnels and rockets.
Thanks to steady efforts made by many mathematicians, there have been plenty of results on it from different viewpoints and for different models, for instance, see \cite{BW2018,CG2006,CG2003,CS2005,CS2008,
CS2009,CR,FB63,LJ2013,WS94,XZ2009,XZ2005,YH2006,YH2007} and references therein.
For steady multi-dimensional flows with shocks in a finite nozzle, in order to determine the position of the shock front, Courant and Friedrichs pointed out in \cite{CR} that, without rigorous mathematical analysis, additional conditions are needed to be imposed at the exit of the nozzle and the pressure condition is preferred among different possible options.
From then on, many mathematicians have been working on this issue and there have been many substantial progresses.
In particular, Chen-Feldman proved in \cite{CG2003} the existence of transonic shock solutions in a finite flat
nozzle for multi-dimensional potential flows with given potential value at the exit and an assumption that the shock front passes through a given point. Later in \cite{CG2006}, with given vertical component of the velocity at the exit, Chen-Chen-Song established the existence of the shock solutions for 2-D steady Euler flows. See also \cite{ParkRyu_2019arxiv} for a recent result for 3-D axisymmetric case. Both existence results are established under the assumption that the shock front passes through a given point, which is employed to deal with the same difficulty as the problem in this paper that the position of the shock front of the unperturbed shock solutions can be arbitrary in the nozzle.
Without such an artificial assumption, recently in \cite{FB63}, Fang-Xin establish the existence of the transonic shock solutions in an almost flat nozzle with the pressure condition at the exit, as suggested by Courant-Friedrichs in \cite{CR}. It is interesting that the results in \cite{FB63} indicate that, for a generic nozzle and given pressure condition at the exit, there may exist more than one shock solutions, that is, there may exist more than one admissible positions of the shock front.
It should be noted that, in a diverging nozzle which is an expanding angular sector, the position of the shock front can be uniquely determined by the pressure condition at the exit under the assumption that the flow states depend only on the radius (see \cite{CR}). And the structural stability of this shock solution under small perturbation of the nozzle boundary as well as the pressure condition at the exit has been established for 2-D case in a series of papers \cite{LiXinYin2009MRL}, \cite{LJ2013} by Li-Xin-Yin and in \cite{CS2009} by Chen. See also \cite{Yong,WS94} for a recent advance towards 3-D axisymmetric case.

\subsection{Organization of the paper.}

The paper is organized as follows. In Section 2, the problem {\bf {$\textit{FBPC}$}} is reformulated by a modified Lagrange transformation, introduced by Weng-Xie-Xin in \cite{WS94}, which straightens the stream line without the degeneracy along the symmetric axis. Then the free boundary problem for the linearized Euler system is described, which serves to determine the initial approximation. Finally, the main theorem to be proved is given.
In Section 3, we shall establish a well-posed theory for boundary value problem of the elliptic sub-system of the linearized Euler system at the subsonic state behind the shock front. It turns out that there exists a solution to the problem if and only if a solvability condition is satisfied for the boundary data. This solvability condition will be employed to determine the position of the free boundary.
In Section 4, we prove the existence of the initial approximation by applying the theorem proved in Section 3. Then a nonlinear iteration scheme will be described, starting from the initial approximation, in Section 5. Finally, in Section 6, the nonlinear iteration scheme will be verified to be well-defined and contractive, which concludes the proof for the main theorem.

\section{Reformulation by Lagrange Transformation and Main Results}

For 2-D steady Euler system, it is convenient to introduce the Lagrange transformation which straighten the streamline (see, for instance, \cite{CS2005,FB63,LJ2013}). The idea also applies to the 3-D steady axisymmetric Euler system. However, degeneracy occurs along the symmetric axis such that it is not invertible.
Weng-Xie-Xin introduced in \cite{WS94} a modified Lagrange transformation which successfully overcame this difficulty. We are going to apply this modified Lagrange transformation to reformulate the problem {\bf {$\textit{FBPC}$}}.

\subsection{The modified Lagrange transformation.}

For the 3-D steady axisymmetric Euler system, the Lagrange transformation is defined as, with $\eta(z,0) = 0$,
\begin{equation}
\left\{
\begin{aligned}
&\xi = z,\\
&\eta = \int_{(0,0)}^{(z,r)} t \rho u(s,t)\dif t - t \rho v(s,t)\dif s.
\end{aligned}
\right.
\end{equation}
It can be easily verified that its Jacobi matrix degenerates along the symmetric axis $ r=0 $, which yields that it is not invertible. This difficulty can be overcome with a modification introduced in \cite{WS94} by Weng-Xie-Xin, which will be used in this paper.

The modification is as follows. Let
\begin{equation}
\left\{
\begin{aligned}
&\tilde{\xi} = \xi,\\
&\tilde{\eta} = \eta^{\frac12}.
\end{aligned}
\right.
\end{equation}
Then the Jacobian of the modified transformation is
\[ \displaystyle\frac{\partial(\tilde{\xi},\tilde{\eta})}{\partial(z,r)}= \left|\begin{matrix}
\partial_z \tilde{\xi}& \partial_r\tilde{\xi}\\
\partial_z \tilde{\eta}& \partial_r\tilde{\eta}
\end{matrix}\right|=
\left|\begin{matrix}
1& 0\\
-\displaystyle\frac{r \rho v}{2\tilde{\eta}} & \displaystyle\frac{r \rho u}{2\tilde{\eta}}
\end{matrix}\right| = \displaystyle\frac{r \rho u}{2\tilde{\eta}}.\]
For the background steady plane normal shock solution  $\bar{U}_\pm$, it is easily seen that
\begin{equation}
{\eta }(z,r) = \int_{0}^{r} t \bar{\rho}_\pm \bar{q}_\pm \dif t = \displaystyle\frac12 \bar{\rho}_\pm \bar{q}_\pm r^2,
\end{equation}
such that $ \tilde{\eta} (z,r) = r $, where it is assume that, without loss of generality, $\bar{\rho}_+\bar{q}_+ =\bar{\rho}_-\bar{q}_-=2$.  Therefore, it can be anticipated that, if  $U_\pm$ is close to the background solution $\bar{U}_\pm$, then there exist positive constants  $C_1$ and  $C_2$, depending only on  $\bar{U}_\pm$, such that $C_1 r \leq \tilde{\eta} (z,r) \leq C_2 r$, which implies that there exists a constant  $ C_3 $, depending on $\bar{U}_\pm$, such that the Jacobian of the modified Lagrange transformation satisfies
\begin{equation}
\displaystyle\frac{\partial(\tilde{\xi},\tilde{\eta})}{\partial(z,r)} \geq C_3 > 0,
\end{equation}
that is, it does not degenerate and is invertible.

Under the modified Lagrange transformation, we have $r(\tilde{\xi},0) = 0$ and
\begin{align}
\dif z = \dif \tilde{\xi},\quad \dif r = \displaystyle\frac{2\tilde{\eta}}{r\rho u}\dif \tilde{\eta} + \displaystyle\frac{v}{u}\dif \tilde{\xi}.
\end{align}
It follows that
\begin{align}
r = r(\tilde{\xi}, \tilde{\eta})= \left(2 \int_{0}^{\tilde{\eta}}\displaystyle\frac{2 t }{\rho u (\tilde{\xi}, t)} \dif t\right)^{\frac12}.
\end{align}
In particular, at the background state, one has
\begin{align}
  \bar{r} = \Big(2 \int_{0}^{\tilde{\eta}}\displaystyle\frac{2 t }{\bar{\rho}_\pm \bar{q}_\pm} \dif t\Big)^{\frac12} = \sqrt{\frac{2}{\bar{\rho}_\pm \bar{q}_\pm}}\tilde{\eta}.
\end{align}

Then, it follows after direct computations that, under the modified Lagrange transformation, the Euler  system $\eqref{eq536}$ becomes
\begin{equation}\label{eq3927}
\begin{aligned}
&\partial_{\tilde\xi} \Big(\displaystyle\frac{2{\tilde\eta}}{r\rho u}\Big) - \partial_{\tilde\eta} \Big(\displaystyle\frac{v}{u}\Big) = 0,\\
&\partial_{\tilde\xi} v + \displaystyle\frac{r}{2{\tilde\eta}}\partial_{\tilde\eta} p = 0,\\
&\partial_{\tilde\xi} \Big(u+ \displaystyle\frac{p}{\rho u}\Big) - \displaystyle\frac{r}{2{\tilde\eta}} \partial_{\tilde\eta} \Big(\displaystyle\frac{pv}{u}\Big) - \displaystyle\frac{pv}{r\rho u^2} = 0,\\
&\partial_{\tilde\xi} B =0.
\end{aligned}
\end{equation}
For simplicity of the notations, we drop `` $ \tilde{} $ '' hereafter as there is no confusion taking place.

Further computation yields that the system \eqref{eq3927} can be rewritten as
\begin{align}
&\partial_{\eta} p - \displaystyle\frac{2{\eta}}{r}\cdot \displaystyle\frac{\sin\theta}{\rho q} \partial_{\xi} p + \displaystyle\frac{2{\eta}}{r}\cdot q \cos\theta\cdot \partial_{\xi} \theta = 0,\label{eq2916}\\
&\partial_{\eta} \theta - \displaystyle\frac{2{\eta}}{r}\cdot\displaystyle\frac{\sin\theta}{\rho q}\partial_{\xi} \theta + \displaystyle\frac{2{\eta}}{r}\cdot\displaystyle\frac{\cos\theta}{\rho q}\displaystyle\frac{M^2 - 1}{\rho q^2} \partial_{\xi} p + \displaystyle\frac{2{\eta} }{r^2}\cdot\displaystyle\frac{\sin\theta}{\rho q} = 0,\label{eq2917}\\
&\rho q \partial_{\xi} q + \partial_{\xi} p = 0,\label{eq2918}\\
&\partial_{\xi} s = 0,\label{eq2919}
\end{align}
where the equation \eqref{eq2918} can also be replaced by the following equation in the conservative form:
\begin{equation}\label{eq2902}
\partial_{{\xi}} {B} = 0.
\end{equation}

\begin{rem}
	It is easy to see that the equations \eqref{eq2918} and \eqref{eq2919} are transport equations which are hyperbolic. Moreover, the equations $\eqref{eq2916}$ and $\eqref{eq2917}$ can be rewritten in the matrix form as below:
	\begin{equation}\label{eq080}
	\partial_{\eta} (\theta, p )^T + A(U)\partial_{\xi} (\theta, p)^T + a(U)= 0,
	\end{equation}
	where $a(U) = \Big( \displaystyle\frac{2{\eta}}{r^2}\cdot\displaystyle\frac{\sin\theta}{\rho q},\, 0\Big)^T$, and
	
	\[ A(U) = \displaystyle\frac{2{\eta}}{r}\displaystyle\frac{1}{\rho q}\begin{pmatrix}
	 -\sin\theta&\displaystyle\frac{M^2 -1}{\rho q^2}\cos\theta\\     \rho q^2\cos\theta&
	 -\sin\theta
	\end{pmatrix}. \]	
	Direct computations yield that the eigenvalues of $A(U)$ are
	\begin{equation}
	\lambda_\pm = \displaystyle\frac{2{\eta}}{r}\displaystyle\frac{1}{\rho q}\Big(- \sin\theta \pm \sqrt{M^2 -1}\cos\theta\Big),
	\end{equation}
	which yields that, for supersonic flows with the Mach number $M>1$ such that $ \lambda_\pm $ are real, the system $\eqref{eq080}$ is hyperbolic, while for subsonic flows with $M<1$ such that $ \lambda_\pm $ are a pair of conjugate complex numbers, the system $\eqref{eq080}$ is elliptic. Therefore, the system \eqref{eq2916}-\eqref{eq2919} is hyperbolic as $ M>1 $, while it is elliptic-hyperbolic composite as $ M<1 $.
\end{rem}

Let ${\Gamma}_{\mr{s}} \defs\{ ({{\xi}}, {{\eta}})\in \mathbb{R}^2 : {\xi} = {\psi}({\eta}),\,\,0 <{\eta} <1 \}$ be the position of a shock front, under the modified Lagrange transformation, the Rankine-Hugoniot conditions  $\eqref{eq889}$-$\eqref{eq899}$ across the shock front become
\begin{align}
&\displaystyle\frac{2\eta}{r}\Big[\displaystyle\frac{1}{\rho u}\Big] + \psi'\Big[\displaystyle\frac{v}{u}\Big] = 0,\label{eq2903}\\
&[v] - \psi' \displaystyle\frac{r}{2\eta} [p] = 0,\label{eq2904}\\
&\Big[ u+ \displaystyle\frac{p}{\rho u}\Big] + \psi' \displaystyle\frac{r}{2\eta} \Big[\displaystyle\frac{p v}{u}\Big] =0,\label{eq2905}\\
&[B] = 0.\label{eq2906}
\end{align}
Using the equation $\eqref{eq2904}$, we can eliminate the quantity $\psi'$ in the equations $\eqref{eq2903}$ and $\eqref{eq2905}$ respectively to obtain
\begin{align}\label{eq82}
&G_1(U_+, U_-) : = \Big[\displaystyle\frac{1}{\rho u}\Big][p] + \Big[\displaystyle\frac{v}{u}\Big][v]= 0,\\
&G_2(U_+, U_-) : = \Big[u+ \displaystyle\frac{p}{\rho u}\Big] [p]  + \Big[\displaystyle\frac{p v}{u}\Big][v]=0,
\end{align}
moreover, we denote
\begin{align}
&G_3(U_+, U_-): = \Big[\frac12 q^2+i\Big]=0,\label{eq060}\\
&G_4(U_+, U_-;\psi'): = [v] - \psi'\displaystyle\frac{r}{2\eta}[p]=0.\label{eq3999}
\end{align}

Under the modified Lagrange transformation, the domain $\widetilde{\mathcal {N}}$ becomes
\begin{equation}
\begin{aligned}
{\Omega} = \{ ({\xi}, {\eta})\in \mathbb{R}^2 : 0 < {\xi} < L, \,\,\,0 < {\eta} < 1\},
\end{aligned}
\end{equation}
and it is separated by a shock front ${\Gamma}_{\mr{s}} $
into two parts (see Figure \ref{fig:3}):
the supersonic region and subsonic region, denoted by
\begin{equation}\label{eq:supersonic_region_Lagrange}
{\Omega}_- = \{ ({\xi}, {\eta})\in \mathbb{R}^2 : 0 < {\xi} < \psi({\eta}), \,\,\,0 < {\eta} < 1\},
\end{equation}
\begin{equation}
{\Omega}_+ = \{ ({\xi}, {\eta})\in \mathbb{R}^2 : \psi({\eta}) < {\xi} < L, \,\,\,0 < {\eta} < 1\}
\end{equation}
respectively.
The boundaries $E_0$, $W_0$, $E_L$, $W_\varphi$ become
\begin{equation}\label{eq73}
\begin{aligned}
&{\Gamma}_1 = \{ ({{\xi}}, {{\eta}})\in \mathbb{R}^2 : {\xi} =0,\,\,\, 0 <{\eta} <1 \},\\
&{\Gamma}_2 = \{ ({{\xi}}, {{\eta}})\in \mathbb{R}^2 :  0 <{\xi} < L, \,\,\,{\eta} =0 \},\\
&{\Gamma}_3 = \{ ({{\xi}}, {{\eta}})\in \mathbb{R}^2 : {\xi}  = L ,\,\,\, 0<{\eta} <1 \},\\
&{\Gamma}_4 = \{ ({{\xi}}, {{\eta}})\in \mathbb{R}^2 :  0 < {\xi} < L , \,\,\,{\eta} =1 \}.
\end{aligned}
\end{equation}
Thus, the free boundary problem {\bf {$\textit{FBPC}$}} is reformulated as follows under the modified Lagrange transformation.

\vskip 0.5cm

{\bf The Free Boundary Problem {$\textit{FBPL}$}}

\begin{figure}[!h]
	\centering
	\includegraphics[width=0.45\textwidth]{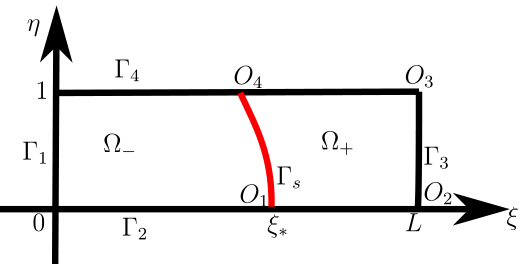}
	\caption{The transonic shock flows in the Lagrangian coordinate.\label{fig:3}}
\end{figure}

Try to determine $ (U_{-}(\xi,\eta),\ U_{+}(\xi,\eta),\ \psi(\eta)) $ in $ \Omega $ such that:
\begin{enumerate}
	\item $ U_{-}(\xi,\eta) $ satisfies the equations  \eqref{eq2916}-\eqref{eq2919} in $ \Omega_- $, and the following boundary conditions:
	\begin{align}
	&U_-= \bar{U}_-,&\quad  &\text{on} \,\,\,  \Gamma_1,\label{eq808}\\
	&\theta_- = \sigma \Theta(\xi).&\quad &\text{on} \,\,\, \Gamma_{4}\cap\overline{\Omega_-}.\label{eq909}
	\end{align}

	\item  $ U_{+}(\xi,\eta) $ satisfies the equations  \eqref{eq2916}-\eqref{eq2919} in $ \Omega_+ $, and the following boundary conditions:
	\begin{align}
	&\theta_+ = \sigma \Theta(\xi), &\quad &\text{on} \quad \Gamma_{4}\cap\overline{\Omega_+},\label{eq911}\\
	&p_+ = p_{\mr{e}}(L,\eta) := \bar{p}_+ +  \sigma P_{\mr{e}}  (r(L,\eta)),&\quad &\text{on}\quad \Gamma_3,\label{eq910}
	\end{align}
	where
	\begin{equation}
	r(L,\eta) =  \Big(2 \int_{0}^{{\eta}}\displaystyle\frac{2 t }{\rho u (L, t)} \dif t\Big)^{\frac12}.
	\end{equation}
	
	\item On the shock front $ \Gamma_{\mr{s}} $, the Rankine-Hugoniot conditions $\eqref{eq2903}$-$\eqref{eq2906}$ hold for the states $(U_{-} , U_{+})$;
	
	\item Finally, on $ \Gamma_{2} $, both $U_{-}$ and $ U_{+}$ satisfy
	\begin{equation}\label{45}
	\theta_- = 0 , \quad \partial_\eta (p_{-}, q_-, s_-)=0,\quad \partial_\eta^2\theta_- = 0, \quad \text{on} \quad \Gamma_{2}\cap\overline{\Omega_-},
	\end{equation}
	and
	\begin{equation}\label{46}
	\theta_+ = 0 , \quad \partial_\eta (p_+, q_+, s_+) = 0,\quad \partial_\eta^2\theta_+ = 0,\quad
 \text{on} \quad \Gamma_{2}\cap\overline{\Omega_+}.
	\end{equation}
	
\end{enumerate}

\vskip 0.5cm

In this paper, the free boundary problem {\bf $\textit{FBPL}$} is going to be solved near the background solution  $\bar{U}_\pm$.
Once it is solved, the existence of shock solutions to the problem  {\bf $\textit{FBPC}$} can be established since the modified Lagrange transformation is invertible.

\subsection{The free boundary problem for the initial approximation.}

To solve the free boundary problem {\bf $\textit{FBPL}$}, one of the key step is to obtain information on the position of the shock front. Motivated by the ideas in \cite{FB63}, we are going to design a free boundary problem for the linearized Euler system based on the background shock solution $\bar{U}_\pm$, whose solution could serve as an initial approximation.

\begin{figure}[!h]
	\centering
	\includegraphics[width=0.45\textwidth]{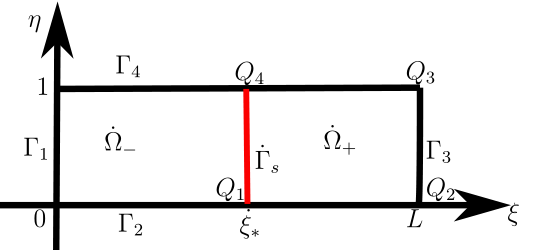}
	\caption{The domain for the linearized problem.\label{fig:4}}
\end{figure}

Assume the initial approximating position of the shock front is
\begin{equation}
\dot{\Gamma}_{\mr{s}}= \{(\xi,\eta) : \xi = \dot{\xi}_*, \,\,\,0< \eta <1\},
\end{equation}
where $0<\dot{\xi}_*<L$ is unknown and will be determined later (see Figure \ref{fig:4}).
Then the whole domain $ \Omega $ is divided by $ \dot{\Gamma}_{\mr{s}} $ into two parts:
\begin{align}
&\dot{\Omega}_- = \{ (\xi, \eta)\in \mathbb{R}^2 : 0 < \xi < \dot{\xi}_*, \,\,\,0 < \eta < 1\},\\
&\dot{\Omega}_+ = \{ (\xi, \eta)\in \mathbb{R}^2 : \dot{\xi}_* <\xi < L,\,\,\, 0 < \eta < 1\},
\end{align}
where $ \dot{\Omega}_- $ is considered as the initial approximating domain of the supersonic flow ahead of the shock front, and $ \dot{\Omega}_+ $ is the subsonic flow behind the shock front.
Let $ \dot{U}_- =  (\dot{\theta}_-, \dot{p}_-, \dot{q}_-, \dot{s}_-)^{T}$ defined in  $ \dot{\Omega}_- $ satisfies the linearized Euler system at the supersonic state $ \bar{U}_- $ below which will be taken as the initial approximation for the supersonic flow ahead of the shock front:
\begin{align}
&\partial_\eta \dot{p}_- + 2 \bar{q}_- \partial_\xi \dot{\theta}_- = 0,\label{eq80-}\\
&\partial_\eta \dot{\theta}_- + 2 \displaystyle\frac{\bar{M}_{-}^2 -1} {\bar{\rho}_{-}^2 \bar{q}_{-}^3} \partial_\xi \dot{p}_-  + \displaystyle\frac{1}{\eta}\dot{\theta}_-= 0,\label{eq843-}\\
&\bar{\rho}_- \bar{q}_- \partial_\xi \dot{q}_- + \partial_\xi \dot{p}_- = 0,\label{eq844-}\\
&\partial_\xi \dot{s}_- = 0.\label{eq845-}
\end{align}
Moreover, let $ \dot{U}_+ =  (\dot{\theta}_+, \dot{p}_+, \dot{q}_+, \dot{s}_+)^{T}$ defined in  $ \dot{\Omega}_+ $ satisfies the linearized Euler system at the subsonic state $ \bar{U}_+ $ below which will be taken as the initial approximation for the subsonic flow behind the shock front:
\begin{align}
&\partial_\eta \dot{p}_+ + 2 \bar{q}_+ \partial_\xi \dot{\theta}_+ = 0,\label{eq80+}\\
&\partial_\eta \dot{\theta}_+ + 2 \displaystyle\frac{\bar{M}_{+}^2 -1} {\bar{\rho}_{+}^2 \bar{q}_{+}^3} \partial_\xi \dot{p}_+  + \displaystyle\frac{1}{\eta} \dot{\theta}_+= 0,\label{eq843+}\\
&\partial_\xi\Big(\bar{q}_+ \dot{q}_+ + \displaystyle\frac{1}{\bar{\rho}_+} \dot{p}_+ + \bar{T}_+ \dot{s}_+ \Big)= 0,\label{eq844+}\\
&\partial_\xi \dot{s}_+ = 0.\label{eq845+}
\end{align}

Then the following free boundary problem will be served to determine an initial approximation $\dot{U}_-$, $\dot{U}_+$, $\dot{\xi}_*$, and together with the updated approximating shock profile $\dot{\psi}'$.

\vskip 0.5cm

{\bf The Free Boundary Problem {$\textit{IFBPL}$} for the initial approximation}

Try to determine $ (\dot{U}_{-}(\xi,\eta),\ \dot{U}_{+}(\xi,\eta),\ \dot{\xi}_*;\ \dot{\psi}'(\eta)) $ in $ \Omega $ such that:
\begin{enumerate}
	\item $ \dot{U}_{-}(\xi,\eta) $ satisfies the equations \eqref{eq80-}-\eqref{eq845-} in $ \dot\Omega_- $, and the following boundary conditions:
	\begin{align}
	&\dot{U}_- = 0,&\quad &\text{on} \quad \Gamma_1,\label{eq8561}\\
	&\dot\theta_- = \sigma \Theta(\xi), &\quad &\text{on} \quad \Gamma_{4}\cap\overline{\dot\Omega_-}.\label{eq8564}
	\end{align}

	\item  $ \dot{U}_{+}(\xi,\eta) $ satisfies the equations  \eqref{eq80+}-\eqref{eq845+} in $ \dot{\Omega}_+ $, and the following boundary conditions:
	\begin{align}
	&\dot{\theta}_+ = \sigma \dot{\Theta}\defs \sigma {\Theta}(\xi), &\quad &\text{on} \quad \Gamma_{4}\cap\overline{\dot\Omega_+},\label{eq8584}\\
	&\dot{p}_+ = \sigma \dot{P}_{\mr{e}} \defs \sigma P_{\mr{e}}  (\eta),&\quad &\text{on}\quad \Gamma_3.\label{eq8583}
	\end{align}

	\item On the free boundary $ \dot{\Gamma}_{\mr{s}} $,  $\dot{U}_{-}$ and  $\dot{U}_{+}$ satisfy the following linearized R-H conditions: 	
	\begin{align}
	&{\mathbf{\alpha}}_{j+} \cdot {\dot{U}}_+ + {\mathbf{\alpha}}_{j-} \cdot {\dot{U}}_- = 0,  (j = 1,2,3),\label{eq090}\\
	&{\mathbf{\alpha}}_{4+} \cdot {\dot{U}}_+ + {\mathbf{\alpha}}_{4-} \cdot {\dot{U}}_- - \frac12[\bar{p}]{\dot{\psi}}' = 0,\label{eq021}
	\end{align}
	where
	\begin{equation}\label{eq86}
	\begin{aligned}
	&{\mathbf{\alpha}}_{j\pm} = {{\nabla_{{U}_\pm}}}G_j(\bar{U}_+, \bar{U}_-),\,\,\, {\mathbf{\alpha}}_{4\pm} = {{\nabla_{{U}_\pm}}}G_4(\bar{U}_+, \bar{U}_-;0).
	\end{aligned}
	\end{equation}

	\item Finally, on $ \dot{\Gamma}_{2} $, both $\dot{U}_{-}$ and $ \dot{U}_{+}$  satisfy
	\begin{equation}\label{eq451}
	\dot\theta_- = 0 , \quad \partial_\eta (\dot{p}_{-}, \dot{q}_-, \dot{s}_-)=0, \quad \partial_\eta^2 \dot{\theta}_{-}=0,\quad \text{on} \quad \Gamma_{2}\cap\overline{\dot\Omega_-},
	\end{equation}
	and
	\begin{equation}\label{eq461}
	\dot{\theta}_+ = 0, \quad \partial_\eta (\dot{p}_+ , \dot{q}_+, \dot{s}_+)= 0,\quad\partial_\eta^2 \dot{\theta}_{+}=0,\quad \text{on} \quad \Gamma_{2}\cap\overline{\dot\Omega_+}.
	\end{equation}
\end{enumerate}

\vskip 0.5cm

\begin{rem}\label{rem:linearized_RH}
	It should be noted that direct computations yield that the coefficients of the linearized R-H conditions \eqref{eq090} and \eqref{eq021} on the free boundary  $ \dot{\Gamma}_{\mr{s}} $ have explicit forms given below:
	 \begin{align}
	& {\mathbf{\alpha}}_{1\pm} = \pm \displaystyle\frac{[\bar{p}]}{\bar{\rho}_\pm \bar{q}_\pm}\Big(0 ,\, -\displaystyle\frac{1}{\bar{\rho}_\pm \bar{c}_\pm^2},\, -\displaystyle\frac{1}{\bar{q}_\pm},\, \displaystyle\frac{1}{\gamma c_v}\Big)^T,\label{eq0100}\\
	& {\mathbf{\alpha}}_{2\pm} = \pm \displaystyle\frac{[\bar{p}]}{\bar{\rho}_\pm \bar{q}_\pm} \Big(0 ,\, 1 -\displaystyle\frac{\bar{p}_\pm}{\bar{\rho}_\pm \bar{c}_\pm^2},\, \bar{\rho}_\pm \bar{q}_\pm -\displaystyle\frac{\bar{p}_\pm}{\bar{q}_\pm},\, \displaystyle\frac{\bar{p}_\pm}{\gamma c_v}\Big)^T,\\
	 &{\mathbf{\alpha}}_{3\pm} = \pm \Big(0 ,\, \displaystyle\frac{1}{\bar{\rho}_\pm },\, {\bar{q}_\pm}, \, \displaystyle\frac{1}{(\gamma -1)c_v}\cdot \displaystyle\frac{\bar{p}_\pm}{\bar{\rho}_\pm}\Big)^T, \label{eq101}\\
	& {\mathbf{\alpha}}_{4\pm} = \pm \Big(\bar{q}_\pm ,\, 0,\,0 ,\, 0\Big)^T\label{s}.
	 \end{align}
Moreover, by taking $ \dot{U}_{-} $ as known data, the equations \eqref{eq090} form a closed linear algebraic equations for $ (\dot{p}_+, \dot{q}_+, \dot{s}_+) $ such that they can be expressed by $ \dot{U}_{-} $ on the free boundary  $ \dot{\Gamma}_{\mr{s}} $.
	 And it turns out that, in order to determine $\dot{U}_-$, $\dot{U}_+$, and $\dot{\xi}_*$, it is sufficient to only impose \eqref{eq090} on  $ \dot{\Gamma}_{\mr{s}} $. This yields that the condition \eqref{eq021} is only employed to determine $\dot{\psi}'$, which will be used in the next step of the iteration.
\end{rem}

\begin{rem}
	Since $ \bar{U}_+ $ is a subsonic state, the sub-system \eqref{eq80+}-\eqref{eq843+} is an elliptic system of first order and there may exists no solutions for its boundary value problem unless the prescribed boundary data satisfy certain solvability conditions, which will be used to determine the unknown variable $\dot{\xi}_*$ and the free boundary $ \dot{\Gamma}_{\mr{s}} $. The mechanism is similar as the 2-D problem in \cite{FB63}.
	However, different from the linearized elliptic sub-system in \cite{FB63}, there exists an additional lower order term with variable coefficients in \eqref{eq843+} and singularity occurs as $ \eta=0 $.
	These differences bring new difficulties in formulating the solvability condition and establishing the existence of the solution to the problem {\bf{$\textit{IFBPL}$}}. More efforts need to be made to deal with them, which will be done in this paper.
\end{rem}

\subsection{Main results.}

In this paper, we are going to solve the free boundary problem  {\bf{$\textit{IFBPL}$}} and obtain an initial approximation of the shock solution. Then a nonlinear iteration scheme based on this initial approximation can be carried out. The iteration scheme will be shown to be convergent and the limit is a shock solution to the problem  {\bf{$\textit{FBPL}$}}.

Before describing the main theorems in detail, we first introduce the function spaces for the solutions and associated norms.
For the hyperbolic part of the problem, it is natural to use the classical H\"{o}lder spaces. Let $\Omega \subset \mathbb{R}^n$ be a bounded domain, $k\geq 0$ be an integer, and $0< \alpha < 1$, $\mcc^{k,\alpha}$ denote the classical H\"{o}lder spaces with the index $(k, \alpha)$ for functions with continuous derivatives up to $k$-th order, equipped with the classical $\mcc^{k,\alpha}$ norm:
\begin{equation}
   \|u\|_{\mcc^{k,\alpha}(\Omega)} : = \sum_{|\mathbf{m}|\leq k} \sup\limits_{\mathbf{x}\in\Omega}|D^{\mathbf{m}} u(\mathbf{x})|+ \sum_{|\mathbf{m}| =  k} \sup\limits_{\mathbf{x},\mathbf{y}\in\Omega; \mathbf{x}\neq \mathbf{y}}\frac{|D^{\mathbf{m}} u(\mathbf{x}) - D^{\mathbf{m}} u(\mathbf{y})|}{|\mathbf{x}-\mathbf{y}|^\alpha},
\end{equation}
where $D^{\mathbf{m}} = \partial_{x_1}^{m_1}\partial_{x_2}^{m_2}\cdots\partial_{x_n}^{m_n}$, and
$\mathbf{m}=(m_1,m_2,\ldots, m_n)$ is a multi-index with $m_i\geq 0$ an integer and $|\mathbf{m}| = \sum\limits_{i =1}^n m_i$.
For elliptic part of the problem, since the boundary of the domain has corner singularities, weighted H\"{o}lder norms will be employed.
 Let $\Gamma$ be an open portion of $\partial\Omega$, for any $\mathbf{x}$, $\mathbf{y}$ $\in$ $\Omega$, define
\begin{equation}
d_{\mathbf{x}}: = \dist(\mathbf{x}, \Gamma),\,\,\, \text{and} \,\,\,
d_{\mathbf{x},\mathbf{y}}:= \min (d_{\mathbf{x}},d_{\mathbf{y}}).
\end{equation}
Let $\alpha\in(0,1)$ and $\delta\in \mathbb{R}$, we define:
\begin{align}
&[u]_{k ,0;\Omega}^{(\delta;\Gamma)} : = \sup\limits_{\mathbf{x}\in\Omega, |\mathbf{m}| = k}\Big(d_{\mathbf{x}}^{\max(k + \delta,0)}|D^{\mathbf{m}} u(\mathbf{x})| \Big),\\
&[u]_{k,\alpha;\Omega}^{(\delta;\Gamma)} : = \sup\limits_{{\mathbf{x},\mathbf{y}}\in\Omega, \mathbf{x}\neq\mathbf{y}, |\mathbf{m}| = k}\left(d_{\mathbf{x},\mathbf{y}}^{\max( k +\alpha+\delta,0)}\displaystyle\frac
{|D^{\mathbf{m}} u(\mathbf{x})- D^{\mathbf{m}} u(\mathbf{y})|}{|\mathbf{x} - \mathbf{y}|^\alpha}\right),\\
& \| u \|_{k,\alpha;\Omega}^{(\delta;\Gamma)} :  =\sum_{i =0}^{k} [u]_{i,0;\Omega}^{(\delta;\Gamma)} + [u]_{k,\alpha;\Omega}^{(\delta;\Gamma)},
\end{align}
with the corresponding function space defined as
\begin{equation}
 \mathbf{H}_{k,\alpha}^{(\delta;\Gamma)}(\Omega): = \{u: \| u \|_{k,\alpha;\Omega}^{(\delta;\Gamma)} < +\infty\}.
\end{equation}
Moreover, since the Euler system for subsonic flows is elliptic-hyperbolic composite, for the flow state $ U=(\theta,p,q,s)^{T} $, the function spaces for $ (\theta,p) $ are different from $ (q,s) $.

Define
\begin{equation}\label{eq099}
 \|{U} \|_{(\dot{\Omega}_+;\dot{\Gamma}_{\mr{s}})}: = \| (\theta, {p}) \|_{1,\alpha;\dot{\Omega}_+}^{(-\alpha;\{Q_3,Q_4\})}
+ \| ({q} ,{s}) \|_{1,\alpha;\dot{\Gamma}_{\mr{s}}}^{(-\alpha;Q_4)}+ \|({q} ,{s}) \|_{0,\alpha;\dot{\Omega}_+}^{(1-\alpha;\{Q_3,Q_4\})},
\end{equation}
where the points $Q_3$ and $Q_4$ are defined in Figure \ref{fig:4}.

Since the shock front $\Gamma_{\mr{s}}\defs \{{\xi} = {\psi}({\eta})\}$ is a free boundary. To fix the shock front, we introduce the following coordinate transformation
\begin{align*}
\mathcal{T} : \begin{cases}
\tilde{\xi} = L + \displaystyle\frac{L - \dot{\xi}_*}{L - {\psi}(\eta)}(\xi - L),\\
\tilde{\eta} = \eta,
\end{cases}
\end{align*}
with the inverse
\begin{align*}
\mathcal{T}^{-1} : \begin{cases}
\xi = L + \displaystyle\frac{L -  {\psi}(\tilde{\eta})}{L - \dot{\xi}_*}(\tilde{\xi} - L),\\
\eta = \tilde{\eta}.
\end{cases}
\end{align*}
Obviously, under this transformation, the free boundary $\Gamma_{\mr{s}}$ is changed into the fixed boundary $\dot{\Gamma}_{\mr{s}}$. Correspondingly, the domain ${\Omega}_+$ becomes the fixed domain $\dot{\Omega}_+$ (see Figure \ref{fig:3}).

Therefore, we define the norm of $U$ in the domain ${\Omega}_+$ as below:
\begin{equation}
\|{U} \|_{({\Omega}_+;{\Gamma}_{\mr{s}})}: = \| {U}\circ \mathcal{T}^{-1}\|_{(\dot{\Omega}_+;\dot{\Gamma}_{\mr{s}})}.
\end{equation}

Now we are going to describe the main theorems of this paper. We first give a theorem showing the existence of the solution to the free boundary problem {\bf{$\textit{IFBPL}$}}.

\begin{thm}\label{thm:initial_approx_existence}
Let $\alpha\in (\frac12,1)$. Suppose the assumptions in Theorem \ref{thm:ExistenceNozzleShocks} hold. Then there exists a unique solution $ (\dot{U}_{-}(\xi,\eta),\ \dot{U}_{+}(\xi,\eta),\ \dot{\xi}_*;\ \dot{\psi}'(\eta)) $ to the free boundary problem {\bf{$\textit{IFBPL}$}}, where the unknown constant $ 0<\dot{\xi}_*<L $ and the free boundary $ \dot{\Gamma}_{\mr{s}} $ is determined by the following equation:
	\begin{equation}\label{eq:initial_approx_free_bdry}
		\mcr(\dot{\xi}_*) = \mcp_{\mr{e}},
	\end{equation}
	where the function $ \mcr(\cdot) $ is defined in \eqref{eq:criterion_function}, the constant $ \mcp_{\mr{e}} $ is defined in \eqref{eq:criterion_pressure_exit}, and \eqref{eq:067} holds.

	Moreover, it holds that
	\begin{align}
	&\|\dot{U}_-\|_{\mcc^{2,\alpha}(\bar{\Omega})} \leq \dot{C}_{-}\sigma,\label{eq061}\\
	&\| \dot{U}_+ \|_{(\dot{\Omega}_+;\dot{\Gamma}_{\mr{s}})} +\| \dot{\psi}'\|_{1,\alpha;\dot{\Gamma}_{\mr{s}}}^{(-\alpha;Q_4)}
	 \leq \dot{C}_+ \sigma,\label{eq089}
	\end{align}
where the constants $\dot{C}_\pm$ depend on $\bar{U}_\pm$, $L$, $\dot{\xi}_*$ and $\alpha$.
\end{thm}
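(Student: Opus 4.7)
The plan is to decouple the problem into a supersonic hyperbolic part, a subsonic elliptic--hyperbolic composite part, and a free-boundary determination, then glue them together via the linearized Rankine--Hugoniot relations.

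First I would solve the supersonic problem for $\dot{U}_-$. The system \eqref{eq80-}--\eqref{eq845-} is linear with constant background coefficients and with only one lower-order term $\frac{1}{\eta}\dot{\theta}_-$ whose singularity is controlled by the compatibility \eqref{eq451} and the vanishing \eqref{eq20000} of $\Theta$ at $\xi=0$ (this is precisely the content of Lemma \ref{lem1} mentioned in the excerpt). Since the entry condition \eqref{eq8561} and the slip condition \eqref{eq8564} are independent of $\dot{\xi}_*$, $\dot{U}_-$ can be constructed once and for all on the strip $\{0<\xi<L,\ 0<\eta<1\}$: $\dot{s}_-\equiv 0$ is trivial, $\dot{q}_-$ is obtained from \eqref{eq844-} by integrating in $\xi$, and the pair $(\dot{\theta}_-,\dot{p}_-)$ is recovered by reducing to a second-order wave equation with data on $\Gamma_1$ and $\Gamma_4$. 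A standard characteristic/energy argument in $\mcc^{2,\alpha}$ gives \eqref{eq061}.

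Next I would use the linearized R-H relations \eqref{eq090} on the tentative shock $\dot{\Gamma}_{\mr{s}}=\{\xi=\dot{\xi}_*\}$. By the explicit formulas in Remark \ref{rem:linearized_RH}, the $3\times3$ subsystem $\mathbf{\alpha}_{j+}\cdot\dot{U}_+ + \mathbf{\alpha}_{j-}\cdot\dot{U}_-=0$ ($j=1,2,3$) is invertible in $(\dot{p}_+,\dot{q}_+,\dot{s}_+)$, so these three traces of $\dot{U}_+|_{\dot{\Gamma}_{\mr{s}}}$ are explicit linear combinations of $\dot{U}_-|_{\xi=\dot{\xi}_*}$, while $\dot{\theta}_+|_{\dot{\Gamma}_{\mr{s}}}$ remains a free boundary trace for the elliptic sub-system. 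The equation \eqref{eq021} is set aside for later, where it will determine $\dot{\psi}'$ once the remaining unknowns are known.

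The main step is to solve the elliptic--hyperbolic composite system \eqref{eq80+}--\eqref{eq845+} in $\dot{\Omega}_+$. The transport equations \eqref{eq844+}, \eqref{eq845+} give $\dot{s}_+$ and $\dot{q}_+$ by integration in $\xi$ from $\dot{\Gamma}_{\mr{s}}$ to $\Gamma_3$, using the R-H traces and the exit pressure \eqref{eq8583}. For the first-order elliptic pair $(\dot{\theta}_+,\dot{p}_+)$, the data consist of $\dot{p}_+=\sigma P_{\mr{e}}$ on $\Gamma_3$, $\dot{\theta}_+=\sigma\Theta$ on $\Gamma_4\cap\overline{\dot{\Omega}_+}$, $\dot{\theta}_+=0$ on $\Gamma_2\cap\overline{\dot{\Omega}_+}$, and $\dot{p}_+$ on $\dot{\Gamma}_{\mr{s}}$ prescribed by R-H: one condition on each of the four sides of $\dot{\Omega}_+$. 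The theorem to be proved in Section 3 asserts that this overdetermined elliptic boundary value problem is well-posed iff a single scalar solvability condition on the boundary data is satisfied. The key computation is to identify this functional: I expect that integrating \eqref{eq80+} against the weight $\eta$ over $\dot{\Omega}_+$ and performing integration by parts (the $\frac{1}{\eta}\dot{\theta}_+$ term in \eqref{eq843+} being exactly compensated by the weight $\eta$, after using \eqref{eq461}) produces boundary integrals whose $\Gamma_3$ contribution is $\mcp_{\mr{e}}$ (via $r(L,\eta)^2\sim \eta$ and \eqref{eq:criterion_pressure_exit}), whose $\Gamma_4$ contribution is $\int_0^L\Theta(\tau)\,d\tau$, and whose $\dot{\Gamma}_{\mr{s}}$ contribution, once $\dot{p}_+|_{\dot{\Gamma}_{\mr{s}}}$ is expressed in terms of $\dot{U}_-$ through $\mathbf{\alpha}_{j\pm}$ and the explicit supersonic solution, equals $-\dot{k}\int_0^{\dot{\xi}_*}\Theta(\tau)\,d\tau$. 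Summed, the solvability condition reduces precisely to \eqref{eq:initial_approx_free_bdry}, $\mcr(\dot{\xi}_*)=\mcp_{\mr{e}}$; then \eqref{eq:assumption_001}, \eqref{eq:85} and \eqref{eq:067} provide a unique root $\dot{\xi}_*\in(0,L)$ by strict monotonicity of $\mcr$. With this $\dot{\xi}_*$, the weighted-H\"older well-posedness theorem from Section 3 yields $(\dot{\theta}_+,\dot{p}_+)$ with the norm in \eqref{eq099} of size $O(\sigma)$; finally \eqref{eq021} gives $\dot{\psi}'$ with estimate of the same order since $[\bar{p}]\neq 0$, proving \eqref{eq089}.

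The main obstacle is the third step. The singular term $\frac{1}{\eta}\dot{\theta}_+$ in \eqref{eq843+}, together with the corner singularities at $Q_3$ and $Q_4$ (intersection of Dirichlet data for different components of the elliptic system), forces the use of weighted H\"older spaces with index $\alpha\in(\tfrac12,1)$ as in the definition \eqref{eq099}, and the identification of the solvability functional in the clean form $\mcr(\dot{\xi}_*)=\mcp_{\mr{e}}$ requires delicate bookkeeping of all four boundary contributions through the R-H algebra of Remark \ref{rem:linearized_RH}. The axis-compatibility conditions \eqref{eq451}, \eqref{eq461} together with \eqref{eq20000} are exactly what is needed to kill the boundary integrals at $\eta=0$ and at $\xi=0$ that would otherwise spoil this identification.
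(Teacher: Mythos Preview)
Your proposal is correct and follows essentially the same route as the paper: the proof in Section~4 is organized exactly as you describe, via Lemma~\ref{lem1} (supersonic hyperbolic problem for $\dot{U}_-$ on the whole strip), Lemma~\ref{lem2} (inverting the $3\times3$ R--H block to get $\dot{p}_+,\dot{q}_+,\dot{s}_+$ on $\dot{\Gamma}_{\mr{s}}$ in terms of $\dot{U}_-$), Lemma~\ref{lem3} (the elliptic sub-problem for $(\dot{\theta}_+,\dot{p}_+)$ and the solvability condition yielding $\mcr(\dot{\xi}_*)=\mcp_{\mr{e}}$), and Lemma~\ref{lem5} (transport equations for $\dot{q}_+,\dot{s}_+$ and \eqref{eq021} for $\dot{\psi}'$).

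Two small points of precision. First, the solvability condition comes from integrating the $\eta$-weighted form of \eqref{eq843+} (not \eqref{eq80+}) over $\dot{\Omega}_+$; this is exactly the condition \eqref{eq082} of Theorem~\ref{sub}, and your parenthetical about the $\frac{1}{\eta}\dot{\theta}_+$ term indicates you already have the right equation in mind. Second, the reduction of the $\dot{\Gamma}_{\mr{s}}$ boundary integral to $-\dot{k}\int_0^{\dot{\xi}_*}\Theta$ is not direct R--H algebra alone: it requires the supersonic-side identity \eqref{eq822} of Lemma~\ref{lem1} (obtained by integrating \eqref{eq843-} over $\{0<\xi<\dot{\xi}_*\}$), which converts $\int_0^1\eta\,\dot{p}_-(\dot{\xi}_*,\eta)\,d\eta$ into $\int_0^{\dot{\xi}_*}\Theta$. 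You should make that step explicit.
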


\begin{rem}
	Similar as the 2-D problem in \cite{FB63}, for a generic function $ \Theta $, the existence of the solution to the free boundary problem {\bf{$\textit{IFBPL}$}} is still valid as long as the condition \eqref{eq:067} holds, while the uniqueness fails since there may exist more than one solutions.
\end{rem}

With the obtained initial approximation $ (\dot{U}_{-}(\xi,\eta),\ \dot{U}_{+}(\xi,\eta),\ \dot{\xi}_*;\ \dot{\psi}'(\eta)) $, we are able to carry out a nonlinear iteration to obtain the following theorem showing the existence of solutions to the free boundary problem {\bf{$\textit{FBPL}$}}.

\begin{thm}\label{thm26}
	
	Let $\alpha \in (\frac12,1)$. Suppose the assumptions in Theorem \ref{thm:ExistenceNozzleShocks} hold. Then there exists a sufficiently small positive constant $\sigma_0$, only depends on $\bar{U}_{\pm}$, $L$, $\dot{\xi}_*$, as well as $\displaystyle\frac{1}{|\Theta(\dot{\xi}_*)|}$, such that for any $0< \sigma \leq \sigma_0$, there exists a solution $(U_-, U_+ ; \psi)$ for the free boundary problem {\bf{$\textit{FBPL}$}}, and the solution satisfies the following estimates:
	\begin{align}
	&|\psi(1) - \dot{\xi}_*|\leq C_{\mr{s}}\sigma,\quad \| {\psi}'\|_{1,\alpha;{\Gamma}_{\mr{s}}}^{(-\alpha;O_4)}\leq C_{\mr{s}} \sigma,\\
	&\| U_- - \bar{U}_- \|_{\mcc^{2,\alpha}(\Omega_-)}  \leq C_-\sigma,\\
	&\| {U}_+ - \bar{U}_+ \|_{({\Omega}_+;{\Gamma}_{\mr{s}})} \leq C_+ \sigma,\\
	&\| U_- - (\bar{U}_- + \dot{U}_-)\|_{\mcc^{1,\alpha}(\Omega_-)}  \leq \frac12\sigma^{\frac32},\\
	&\| U_+\circ \mathcal{T}^{-1}- (\bar{U}_+ + \dot{U}_+)\|_{(\dot{\Omega}_+;\dot{\Gamma}_{\mr{s}})}  \leq \frac12\sigma^{\frac32},\\
&\|{\psi}' - \dot{\psi}'\|_{1,\alpha;\dot{{\Gamma}}_{\mr{s}}}^{(-\alpha;Q_4)}\leq \frac12\sigma^{\frac32},
	\end{align}
	where the constants $C_{\mr{s}}$ and $C_\pm$ depend on $\bar{U}_{\pm}$, $L$, $\dot{\xi}_*$, $\displaystyle\frac{1}{|\Theta(\dot{\xi}_*)|}$ and $\alpha$.

\end{thm}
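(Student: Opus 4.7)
The plan is to realise the solution as the fixed point of a nonlinear iteration built on top of the initial approximation $(\bar U_-+\dot U_-,\bar U_++\dot U_+;\dot\xi_*,\dot\psi')$ produced by Theorem~\ref{thm:initial_approx_existence}. One introduces an iteration set
\[
\mathcal{K}_\sigma=\Big\{(U_-,U_+,\psi):\ \|U_--\bar U_-\|_{\mcc^{2,\alpha}(\Omega_-)}\le C_-\sigma,\ \|U_+-\bar U_+\|_{({\Omega}_+;{\Gamma}_{\mr{s}})}\le C_+\sigma,\ |\psi(1)-\dot\xi_*|+\|\psi'\|_{1,\alpha;\Gamma_{\mr{s}}}^{(-\alpha;O_4)}\le C_{\mr{s}}\sigma\Big\},
\]
with constants $C_\pm,C_{\mr{s}}$ to be fixed in the course of the argument, together with an iteration map $\mathcal{J}\colon\mathcal{K}_\sigma\to\mathcal{K}_\sigma$ whose fixed points solve \textbf{\textit{FBPL}}. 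The $\sigma^{3/2}$ estimates in the statement will emerge from the quantitative comparison $\mathcal{J}(U_-,U_+,\psi)-(\bar U_\pm+\dot U_\pm,\dot\psi)=O(\sigma^2)$, which is the natural nonlinear gain.

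Given $(U_-,U_+,\psi)\in\mathcal{K}_\sigma$, the image $\mathcal{J}(U_-,U_+,\psi)=:(U_-^{\sharp},U_+^{\sharp},\psi^{\sharp})$ will be constructed in three substeps. \emph{Step 1:} Solve the full quasilinear hyperbolic IBVP \eqref{eq2916}--\eqref{eq2919} for $U_-^{\sharp}$ in $\{0<\xi<\psi(\eta)\}$ with boundary data~\eqref{eq808} and~\eqref{eq909}; existence and the $\mcc^{2,\alpha}$ bound follow from characteristic analysis, with compatibility at the entry corner supplied by~\eqref{eq20000} and on the axis by~\eqref{45}, as formalised in Lemma~\ref{lem1}. \emph{Step 2:} Write $U_+=\bar U_++\dot U_++\delta U_+$ and linearise the interior equations together with the R--H relations \eqref{eq2903}--\eqref{eq2906} about $(\bar U_+,\dot\xi_*)$, using the known iterates and $U_-^{\sharp}$, to obtain a linear elliptic--hyperbolic composite problem of exactly the form treated in Section~4, with an extra quadratic remainder of size $\sigma^2$ acting as source. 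Its elliptic subsystem admits a solution only under a solvability relation of the form $\mathcal{R}(\xi_*^{\sharp})=\mathcal{P}_{\mr{e}}+O(\sigma^2)$; since $\mathcal{R}'(\dot\xi_*)=-\dot k\,\Theta(\dot\xi_*)\ne 0$ by the non-degeneracy assumption, the implicit function theorem determines a unique $\xi_*^{\sharp}$ close to $\dot\xi_*$, and then the theorem of Section~3 delivers a unique $U_+^{\sharp}$ in the weighted space~\eqref{eq099}. \emph{Step 3:} Integrate the remaining R--H jump~\eqref{eq021} from $\eta=0$ with $\psi^{\sharp}(0)=\xi_*^{\sharp}$ to obtain the updated shock profile $\psi^{\sharp}$.

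To close the scheme one must verify that $\mathcal{J}$ maps $\mathcal{K}_\sigma$ into itself and is a contraction on it. For the self-map property, the outputs of Steps~1 and~3 satisfy the $C_-\sigma$ and $C_{\mr{s}}\sigma$ bounds directly from the input size; for $\delta U_+^{\sharp}$ one observes that it solves a linearised problem whose data are of size $\sigma^2$, which yields at once the refined bound $\frac12\sigma^{3/2}$ against the initial approximation and, a fortiori, the $C_+\sigma$ bound. Contraction will be established in a lower-regularity norm by applying $\mathcal{J}$ to two elements of $\mathcal{K}_\sigma$ and subtracting: the differences of the quadratic source terms and boundary traces produced in Steps~1--3 are of size $\sigma$ times the difference of inputs, so for $\sigma\le\sigma_0$ one obtains a contraction factor $\le\frac12$. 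The Banach fixed-point theorem then produces the unique fixed point in $\mathcal{K}_\sigma$, and the stated estimates follow immediately from the definition of $\mathcal{K}_\sigma$ together with the $O(\sigma^2)$ comparison.

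The main obstacle is Step~2: controlling the linearised elliptic--hyperbolic system uniformly along the iteration while simultaneously locating the shock apex $\xi_*^{\sharp}$. The two features new to the 3-D axisymmetric setting, absent in~\cite{FB63}, are the lower-order term $\frac{1}{\eta}\dot\theta_+$ in~\eqref{eq843+} and the corner singularities at $Q_3,Q_4$; the former forces the compatibility condition~\eqref{eq461} to be propagated through every iterate, while the latter necessitates weighted H\"older estimates and is the reason why the hypothesis $\alpha\in(\frac12,1)$ is imposed. The non-degeneracy $\Theta(\dot\xi_*)\ne 0$ is precisely what makes the solvability condition invertible in $\xi_*^{\sharp}$ and accounts for the factor $1/|\Theta(\dot\xi_*)|$ in $\sigma_0$ and in the constants $C_\pm,C_{\mr{s}}$.
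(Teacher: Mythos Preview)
Your overall plan—build an initial approximation, iterate via a linearised problem whose elliptic solvability condition determines the shock position through the implicit function theorem (using $\mathcal{R}'(\dot\xi_*)=-\dot k\,\Theta(\dot\xi_*)\ne 0$), and close by Banach fixed point with an $O(\sigma^2)$ gain—is the paper's strategy. Two organisational differences are worth flagging.

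First, the paper does \emph{not} include $U_-$ in the iteration. Instead (Lemma~\ref{lem51}) the supersonic problem \eqref{eq2916}--\eqref{eq2919} with data \eqref{eq808}, \eqref{eq909}, \eqref{45} is solved once for all in the full rectangle $\Omega$, yielding a single $U_-\in\mcc^{2,\alpha}(\bar\Omega)$ together with the quadratic comparison $\|U_--(\bar U_-+\dot U_-)\|_{\mcc^{1,\alpha}}\le C_L\sigma^2$. Each iterate then merely evaluates this fixed $U_-$ along the current shock curve $\xi=\psi(\eta)$. This decouples the hyperbolic step entirely from the fixed-point argument and avoids the issue, implicit in your Step~1, of comparing supersonic solutions posed on domains that vary with the iterate; the paper's iteration map $\mathbf{\Pi}$ acts only on $(\delta U,\delta\psi')$ on the subsonic side, with $\delta\xi_*$ determined at each step by the solvability condition (Lemma~\ref{lem61}).

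Second, the anchor for $\psi$ is at the wall, not the axis: the paper writes $\psi(\eta)=\xi_*-\int_\eta^1\delta\psi'(\tau)\,d\tau$ with $\xi_*:=\psi(1)$, which is why the theorem records $|\psi(1)-\dot\xi_*|$. The axisymmetry constraint $\delta\psi'(0)=0$ is enforced as part of the iteration space $\textsl{N}(\varepsilon)$, not as an initial condition in your Step~3. Apart from these points, the mechanics (Lemmas~\ref{lem62} and~\ref{lem63}) proceed exactly as you describe.
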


\section{The Elliptic Sub-problem in the Linearized Problem.}

In solving the free boundary problem {\bf{$\textit{IFBPL}$}}, as well as the linearized problem for the nonlinear iteration, one of the key step is to solve the elliptic sub-problem for $ (\dot{\theta}_+, \dot{p}_+) $. In this section, we extract this elliptic sub-problem and establish a well-posedness theorem for it. Note that the notations used in this section are independent and have no relations to the ones in other parts of the paper.


\begin{figure}[!h]
	\centering
	\includegraphics[width=0.4\textwidth]{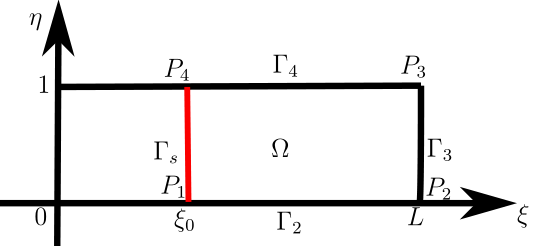}
	\caption{The domain for the boundary value problem in the $(\xi,\eta)$ coordinate.\label{fig:5}}
\end{figure}

Let $\xi_0$ and $L$ be two positive constants, and
\begin{equation}
  {\Omega} = \{ (\xi, \eta)\in \mathbb{R}^2 : \xi_0 < \xi < L, \,\,\,0 < \eta< 1\},
\end{equation}
be a rectangle with the boundaries (see Figure \ref{fig:5})
\begin{align*}
&\Gamma_{\mr{s}} = \{ (\xi,\eta)\in \mathbb{R}^2 : \xi =\xi_0,\,\,\, 0 <\eta <1 \},\\
&\Gamma_2 = \{ (\xi,\eta)\in \mathbb{R}^2 :  \xi_0 <{\xi} < L, \,\,\,\eta =0 \},\\
&\Gamma_3 = \{ (\xi,\eta)\in \mathbb{R}^2 : \xi = L,\,\,\, 0< \eta <1 \},\\
&\Gamma_4 = \{ (\xi,\eta)\in \mathbb{R}^2 :  \xi_0 < {\xi} < L , \,\,\,\eta =1 \}.
\end{align*}

Consider the following boundary value problem for unknowns $(H_1, H_2)$:
\begin{align}
&\partial_\eta  H_1 + \mathcal{A}\partial_\xi  H_2  = \pounds_1, &\quad &\text{in}\quad \Omega\label{eq078}\\
&\Big(\partial_\eta H_2 + \displaystyle\frac{1}{\eta}H_2\Big) - \mathcal{B}\partial_\xi H_1   = \pounds_2, &\quad &\text{in}\quad \Omega \\
&H_1 = \hbar_1, &\quad &\text{on} \quad {\Gamma}_{\mr{s}}\\
&H_2 = 0, &\quad &\text{on}\quad\Gamma_{2}\\
&H_1 = \hbar_3, &\quad &\text{on}\quad\Gamma_3\\
&H_2 = \hbar_4, &\quad &\text{on} \quad \Gamma_{4}\label{eq0078}
\end{align}
where $\mathcal{A}$ and $\mathcal{B}$ are two constants satisfying $\mathcal{A}\mathcal{B}>0$.

\begin{thm}\label{sub}
  Let $\alpha\in(\frac12,1)$. Suppose $\pounds_i\in \mathbf{H}_{0,\alpha}^{(1-\alpha;\{P_3,P_4\} )}({\Omega}) $, $i=1,2$, $\pounds_1(\xi,0) = 0$, $\hbar_1 \in \mathbf{H}_{1,\alpha}^{(- \alpha ;P_4 )}(\Gamma_{\mr{s}})$, $\hbar_{3}\in \mathbf{H}_{1,\alpha}^{(- \alpha ; P_3 )}(\Gamma_3)$, and  $\hbar_{4}\in \mathbf{H}_{1,\alpha}^{(- \alpha ;\{P_3, P_4\} )}(\Gamma_4)$, then for the boundary value problem $\eqref{eq078}$-$\eqref{eq0078}$, there exists a unique solution $(H_1, H_2)$ if and only if
\begin{equation}\label{eq082}
	\int_{\Omega} \eta \pounds_2 \dif \xi \dif \eta = \mathcal{B}\int_{0}^{1} \eta (\hbar_1 - \hbar_3)\dif \eta +\int_{{\xi}_0}^L \hbar_4 \dif \xi.
\end{equation}
Moreover, $(H_1, H_2)$ satisfies the following estimate:
\begin{align}\label{eq083}
&\sum_{i=1}^2 \| H_i \|_{1,\alpha;{\Omega}}^{(-\alpha;\{P_3,P_4\})} \notag\\
\leq& C\left(  \sum_{i=1}^2 \| \pounds_i \|_{0,\alpha;{\Omega}}^{(1-\alpha;\{P_3,P_4\})} + \| \hbar_1 \|_{1,\alpha;\Gamma_{\mr{s}}}^{(-\alpha;P_4)} +  \| \hbar_3 \|_{1,\alpha;\Gamma_3}^{(-\alpha;P_3)} + \| \hbar_4 \|_{1,\alpha;\Gamma_4}^{(-\alpha;\{P_3, P_4\})} \right),
\end{align}
where the constant $C$ depends on $\mathcal{A}$, $\mathcal{B}$, $\xi_0$, $L$ and $\alpha$.
\end{thm}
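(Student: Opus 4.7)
The plan is to reduce the first-order $2\times 2$ system to a scalar second-order elliptic equation for $H_1$, solve the resulting mixed Dirichlet/Neumann boundary value problem, and then recover $H_2$ by integration; the compatibility condition \eqref{eq082} will emerge precisely as the condition that the recovered $H_2$ attain the prescribed value $\hbar_4$ on $\Gamma_4$.

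To see necessity of \eqref{eq082}, multiply the second equation by $\eta$, noting that $\eta(\partial_\eta H_2 + H_2/\eta)= \partial_\eta(\eta H_2)$, and integrate over $\Omega$. The $\eta$-boundary terms contribute $\int_{\xi_0}^L\hbar_4\,d\xi$ (the lower endpoint vanishes automatically because of the factor $\eta$, even before using $H_2|_{\Gamma_2}=0$), while integration by parts in the $\xi$-variable on $\mathcal{B}\eta\partial_\xi H_1$ produces $\mathcal{B}\int_0^1 \eta(\hbar_3-\hbar_1)\,d\eta$; rearranging yields \eqref{eq082}.

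For sufficiency, differentiate the first equation in $\xi$ and the second in $\eta$, and use the first to substitute $\partial_\xi H_2$. The result is the scalar equation
$$\partial_\eta^2 H_1 + \frac{1}{\eta}\partial_\eta H_1 + \mathcal{A}\mathcal{B}\,\partial_\xi^2 H_1 \;=\; \frac{\pounds_1}{\eta}+\partial_\eta \pounds_1-\mathcal{A}\partial_\xi \pounds_2,$$
whose principal part is (up to the rescaling $\xi\mapsto \xi/\sqrt{\mathcal{A}\mathcal{B}}$) the axisymmetric $3$-D Laplacian. The hypothesis $\pounds_1(\xi,0)=0$ ensures the right-hand side is bounded near $\eta=0$. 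Restricting the first equation to $\Gamma_2$ and $\Gamma_4$ yields the induced Neumann data $\partial_\eta H_1=0$ on $\Gamma_2$ (using $H_2|_{\eta=0}=0$ and $\pounds_1|_{\eta=0}=0$) and $\partial_\eta H_1=\pounds_1(\xi,1)-\mathcal{A}\hbar_4'(\xi)$ on $\Gamma_4$; together with the Dirichlet data on $\Gamma_{\mr{s}}$ and $\Gamma_3$ this is a mixed BVP for an axisymmetric $3$-D Laplacian-type operator on a cylinder (with Dirichlet caps and Neumann lateral wall), for which well-posedness is classical. With $H_1$ in hand, define
$$H_2(\xi,\eta):=\frac{1}{\eta}\int_0^\eta t\bigl(\pounds_2(\xi,t)+\mathcal{B}\partial_\xi H_1(\xi,t)\bigr)\,dt,$$
which automatically satisfies $H_2(\xi,0)=0$, solves the second equation, and, thanks to the reduced scalar equation, the first equation too. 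Its trace on $\Gamma_4$ differs from $\hbar_4$ by a function whose $\xi$-derivative vanishes (by the Neumann boundary data used in Step 2), hence by a constant, and the solvability condition \eqref{eq082} is exactly the condition that makes this constant zero.

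The main technical obstacle is the weighted Schauder estimate \eqref{eq083}. Thanks to the axisymmetric interpretation the axis $\Gamma_2$ is a regularity interface rather than a true singular boundary. The serious issue is the pair of corners $P_3,P_4$, where Dirichlet meets Neumann at right angles; the generic boundary asymptotics there is of order $r^{1/2}$, which is exactly why the restriction $\alpha\in(\tfrac12,1)$ appears and why the weights $(-\alpha;\{P_3,P_4\})$ are imposed. The estimate is obtained by combining interior Schauder estimates for the reduced equation with local separation-of-variables expansions near each corner, and then transferring the bound for $H_1$ to $H_2$ via the integral representation and the bound on $\partial_\xi H_1$. Balancing the singular coefficient $\eta^{-1}\partial_\eta H_1$ against the corner singularities, inside weighted Hölder norms consistent with $\alpha>\tfrac12$, is the place where the most delicate work is required.
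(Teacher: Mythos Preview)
Your overall strategy—eliminate $H_2$ to obtain a scalar axisymmetric Laplace-type equation for $H_1$, solve a mixed Dirichlet/Neumann problem, then recover $H_2$ by integration—is a legitimate alternative, and your derivation of the scalar equation, the boundary conditions, and the mechanism by which \eqref{eq082} forces the trace $H_2|_{\Gamma_4}$ to match $\hbar_4$ are all correct. The paper proceeds quite differently: after rescaling it splits $(V_1,V_2)$ into two pieces and introduces a separate potential for each, obtaining a $5$-dimensional Dirichlet Laplace problem for the $\pounds_1$-part and a $3$-dimensional Neumann Laplace problem for the $\pounds_2$-and-boundary-data part; the solvability condition then appears as the classical Neumann compatibility. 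The paper's decomposition has the advantage that each auxiliary problem carries a \emph{single} boundary-condition type and its right-hand side involves only the data $\pounds_i$ themselves, whereas in your reduction the source $\pounds_1/\eta+\partial_\eta\pounds_1-\mathcal{A}\partial_\xi\pounds_2$ contains derivatives of functions that are merely in weighted $C^{0,\alpha}$. You must therefore recast your equation in divergence form (indeed $\pounds_1/\eta+\partial_\eta\pounds_1=\eta^{-1}\partial_\eta(\eta\pounds_1)$, so in the lifted $3$-D picture the source is $\mathrm{div}$ of a bounded field) and work with weak solutions throughout; as written, your argument treats $\partial_\eta\pounds_1,\partial_\xi\pounds_2$ as classical objects, which is a genuine gap.

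There is also a concrete error in your corner discussion. At a right-angle corner where Dirichlet meets Neumann, the leading singular exponent for the Laplacian is $\pi/(2\cdot\tfrac{\pi}{2})=1$, not $1/2$; the corresponding eigenfunctions $r^{2k+1}\sin((2k+1)\theta)$ are polynomials, so these corners impose no intrinsic H\"older restriction. The constraint $\alpha>\tfrac12$ in the theorem does not come from corner asymptotics at all: in the paper it enters because the weighted source satisfies $|\mathbf{F}(\mathbf{Y})|\lesssim d_{\mathbf{Y}}^{\alpha-1}$, and one needs $\int d_{\mathbf{Y}}^{2(\alpha-1)}\,\dif\mathbf{Y}<\infty$ near the boundary to run the $L^2$ energy estimate (see the paper's inequality leading to \eqref{PART1}). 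Your route will need the same integrability at exactly the same step once you pass to the weak formulation; the ``$r^{1/2}$'' heuristic should be replaced by this data-integrability argument.
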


\begin{proof}
	The proof is divided into four steps.

\textbf{Step 1}: In this step, the problem \eqref{eq078}-\eqref{eq0078} will be reduced to a typical form and in order to solve it, it is further decomposed into two auxiliary problems.

 Let
\begin{equation}\label{e}
  (x,y) = \Big(\sqrt{\frac{1}{\mathcal{A}\mathcal{B}}}\xi, \eta\Big),\quad (V_1, V_2)=\Big(\sqrt{\displaystyle\frac{\mathcal{B}}{\mathcal{A}}}H_1, H_2\Big).
\end{equation}
Denote $x_0 := \sqrt{\frac{1}{\mathcal{A}\mathcal{B}}}\xi_0$, $\tilde{L} :=\sqrt{\frac{1}{\mathcal{A}\mathcal{B}}} L$.

\begin{figure}[!h]
	\centering
	\includegraphics[width=0.4\textwidth]{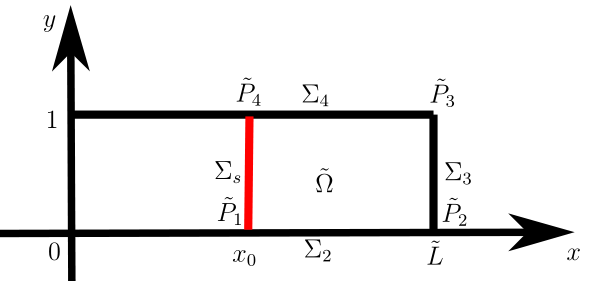}
	\caption{The domain for the boundary value problem in the $(x,y)$ coordinate.\label{fig:55}}
\end{figure}

Then the problem $\eqref{eq078}$-$\eqref{eq0078}$ becomes
\begin{align}
&\partial_y V_1 + \partial_{x} V_2 = \sqrt{\displaystyle\frac{\mathcal{B}}{\mathcal{A}}} {\pounds}_1: = F_1 , &\quad &\text{in}\quad \tilde{\Omega}\label{eq901}\\
&  \partial_{y}(y V_2)- \partial_x(y V_1)  =y{\pounds}_2:=yF_2, &\quad &\text{in}\quad \tilde{\Omega}\\
&V_1 = \sqrt{\displaystyle\frac{\mathcal{B}}{\mathcal{A}}}{\hbar}_1, &\quad &\text{on} \quad{{\Sigma}}_{\mr{s}}\\
&V_2 = 0 , &\quad &\text{on}\quad{\Sigma}_{2}\\
&V_1 = \sqrt{\displaystyle\frac{\mathcal{B}}{\mathcal{A}}}{\hbar}_3 , &\quad &\text{on}\quad\Sigma_3\\
&V_2 = {\hbar}_4, &\quad &\text{on} \quad \Sigma_{4}\label{eq0901}
\end{align}
where $\tilde{\Omega}$ is a rectangle with the boundaries ${{\Sigma}}_{\mr{s}}$ and $\Sigma_i$, $(i=2,3,4)$,(see Figure \ref{fig:55}).

Then we decompose the problem $\eqref{eq901}$-$\eqref{eq0901}$ into two boundary value problems with different inhomogeneous terms as follows.

 Let $(V_1, V_2)^T = (\mathcal{U}_1, \mathcal{U}_2)^T + (\mathcal{W}_1, \mathcal{W}_2)^T$, where $(\mathcal{U}_1, \mathcal{U}_2)^T$ is the solution to the problem
\begin{align}
 &\partial_{y}(y \mathcal{U}_2) - \partial_x(y \mathcal{U}_1) =0, &\quad &\text{in}\quad \tilde{\Omega}\label{A}\\
&\partial_y \mathcal{U}_1 + \partial_{x} \mathcal{U}_2 = F_1,&\quad &\text{in}\quad\tilde{\Omega}\label{xin}\\
&\mathcal{U}_1 = 0, &\quad &\text{on} \quad{{\Sigma}}_{\mr{s}}\cup{\Sigma}_3\\
&\mathcal{U}_2 = 0 ,&\quad &\text{on}\quad{\Sigma}_{2}\cup{\Sigma}_4\label{A1}
\end{align}
 and $ (\mathcal{W}_1, \mathcal{W}_2)^T$ satisfies the following problem
\begin{align}
&\partial_{y}(y \mathcal{W}_2) - \partial_x(y \mathcal{W}_1)  =y F_2, &\quad &\text{in}\quad \tilde{\Omega}\label{B}\\
&\partial_y \mathcal{W}_1 + \partial_{x} \mathcal{W}_2 = 0, &\quad &\text{in}\quad\tilde{\Omega}\label{Ww}\\
&\mathcal{W}_1 = \sqrt{\displaystyle\frac{\mathcal{B}}{\mathcal{A}}}{\hbar}_1,& \quad &\text{on} \quad{{\Sigma}}_{\mr{s}}\\
&\mathcal{W}_2 = 0 , &\quad &\text{on}\quad{\Sigma}_{2}\\
&\mathcal{W}_1 = \sqrt{\displaystyle\frac{\mathcal{B}}{\mathcal{A}}}{\hbar}_3 , &\quad &\text{on}\quad{\Sigma}_3\\
&\mathcal{W}_2 = {\hbar}_4. &\quad &\text{on} \quad {\Sigma}_{4}\label{B1}
\end{align}

\textbf{Step 2:} In this step, the problem $\eqref{A}$-$\eqref{A1}$ will be solved. Note the equation \eqref{A} implies that there exists a potential function $\phi$ such that
\begin{equation}\label{n}
  ( \partial_x {\phi},  \partial_y {\phi}) = (y\mathcal{U}_2, y\mathcal{U}_1).
\end{equation}
Let $\Phi  = \displaystyle\frac{\phi}{y}$, then \eqref{n} yields that
\begin{align}\label{nn1}
  (\mathcal{U}_1, \mathcal{U}_2) =  \Big( \partial_y {\Phi} + \frac{\Phi}{y}, \partial_x {\Phi}\Big).
\end{align}
Then the problem $\eqref{A}$-$\eqref{A1}$ can be rewritten as the following form:
\begin{align}
&\partial_{xx}\Phi + \partial_{yy}\Phi + \displaystyle\frac{\partial_y {\Phi}}{y} -  \displaystyle\frac{\Phi}{y^2}=  F_1, &\quad &\text{in}\quad \tilde{\Omega} \label{AA}\\
&\partial_y {\Phi} + \frac{\Phi}{y} = 0,&\quad &\text{on} \quad {\Sigma}_{\mr{s}} \cup \Sigma_3\label{AA1}\\
&\partial_x {\Phi}= 0.&\quad  &\text{on} \quad \Sigma_{2}\cup\Sigma_{4}\label{AA3}
\end{align}
Without loss of generality, we may assume $\Phi(x_0, 0) = 0$, then it is easy to see that $\Phi = 0$ on the boundary of $ \tilde{\Omega}$ since the boundary condition \eqref{AA1} can be rewritten as $\partial_y(y\Phi) = 0$, which is exactly a tangential derivative.

 It is obvious that, the coefficients of equation $\eqref{AA}$ tend to infinity as $y$ goes to zero, therefore the traditional estimates for the elliptic equations are not valid near $\Sigma_2$. By applying the methods of \cite{BW2018,5d}, the problem \eqref{AA}-\eqref{AA3} can be transformed into a 5-D Laplace equation with Dirichlet boundary conditions. Then the well-posedness theory can be established as below.

	Let ${\Phi} = y \tilde{\Phi}$, the equation $\eqref{AA}$ can be reduced into
	\begin{equation}\label{eq5281}
	\partial_{xx} \tilde{\Phi} + \partial_{yy} \tilde{\Phi} + \displaystyle\frac{3}{y} \partial_y \tilde{\Phi} = \frac{F_1}{y} : = \tilde{F}_1.
	\end{equation}
Define
\begin{equation}
	{F_1^*}(x,y) \defs \frac{1}{{y}^4} \int_{0}^{{y} } \tau^3 \tilde{F}_1(x,\tau)\dif \tau.
	\end{equation}
Then, let
	\begin{equation}
	\mathbf{{F}}_1(x, \mathbf{y}) = \Big(0, {F_1^*}(x,y)y_1, {F_1^*}(x,y)y_2,{F_1^*}(x,y)y_3,{F_1^*}(x,y)y_4\Big),\quad \text{in} \quad \mathcal{D },
	\end{equation}
where
	\begin{equation}
	\mathcal{D }: = \{(x,\mathbf{y}): x\in (x_0,\tilde{L}), \,\mathbf{y} \in \mathbb{R}^4,\, |\mathbf{y}| < 1\},
	\end{equation}
	with $\mathbf{y} = (y_1, y_2, y_3, y_4)$ and $\sum\limits_{i=1}^{4} y_i^2 = y^2$. Denote $\mathbf{Y}\defs (x, \mathbf{y}) \in \mathcal{D}$, then one has
	\begin{equation}
	\tilde{F}_1(x, y ) = {\rm{div}}_{\mathbf{Y}}  {\mathbf{{F}}}_1(x, \mathbf{y}).
	\end{equation}
Therefore, it follows from \eqref{eq5281} that
	\begin{equation}\label{eq528}
	\triangle_{\mathbf{Y}}\tilde{\Phi} = {\rm{div}}_{\mathbf{Y}} \mathbf{{F}}_1, \,\,\, \text{in}\,\,\, \mathcal{D},
	\end{equation}
	with the boundary condition
\begin{align}\label{eq528q}
  \tilde{\Phi} = 0, \quad \text{on} \quad \partial\mathcal{D}.
\end{align}
By Lax-Milgram theorem and Fredholm alternative theorem (cf. \cite{Evans}), there exists a unique solution $\tilde{\Phi}\in H_0^1 (\mathcal{D})$ to the problem \eqref{eq528}-\eqref{eq528q} satisfying
%
\begin{equation}\label{PART1}
	\| \tilde{\Phi} \|_{H_0^1(\mathcal{D})}\leq C \| {F}_1\|_{0,\alpha;\tilde{\Omega}}^{(1 - \alpha;\{\tilde{P}_3,\tilde{P}_4\})}.
	\end{equation}
To obtain \eqref{PART1}, first, by applying $\pounds_1(\xi,0) = 0$ and the definition of $F_1$, it holds that $F_1(x,0) = 0$, then, for all $y\in (0,1)$, one has
	\begin{equation}
	\tilde{F}_1(x,y) = \frac{F_1(x,y) - F_1(x,0)}{y} = \frac{F_1(x,y) - F_1(x,0)}{y^\alpha}y^{\alpha-1}.
	\end{equation}
  Since $F_1\in\mathbf{H}_{0,\alpha}^{(1-\alpha;\{\tilde{P}_3,\tilde{P}_4\})}(\tilde{\Omega})$, it is easy to check that
	\begin{equation}\label{k}
	\| \mathbf{{F}}_1\|_{0,\alpha;\mathcal{D}}^{(1-\alpha;\partial\mathcal{D})}\leq C \| {F}_1\|_{0,\alpha;\tilde{\Omega}}^{(1 - \alpha; \{\tilde{P}_3,\tilde{P}_4\})}.
	\end{equation}

Multiplying $\tilde{\Phi}$ on the both sides of the equation \eqref{eq528}, integrating over $\mathcal{D}$ and then employing the formula of integration by parts, one obtains
 \begin{align}\label{h}
  \int_{\mathcal{D}}|\nabla\tilde{\Phi}|^2 \dif\mathbf{Y} =  \int_{\mathcal{D}}\mathbf{{F}}_1\cdot \nabla\tilde{\Phi}  \dif\mathbf{Y}.
\end{align}
 Applying \eqref{k}, one has $\mathbf{{F}}_1\in \mathbf{H}_{0,\alpha}^{(1 - \alpha;\partial \mathcal{D})}(\mathcal{D})$. According to the definition of the weighted H\"{o}lder norms, it follows that
\begin{equation}\label{xgg}
  |\mathbf{{F}}_1(\mathbf{Y})|\leq  d_{\mathbf{Y}}^{\alpha -1} \| \mathbf{{F}}_1\|_{0,\alpha;\mathcal{D}}^{(1-\alpha;\partial\mathcal{D})},
\end{equation}
where $d_{\mathbf{Y}}: = \dist(\mathbf{Y}, \partial \mathcal{D})$ with $\mathbf{Y}\in \mathcal{D}$, which yields that
\begin{equation}\label{hh}
  \int_{\mathcal{D}}|\mathbf{{F}}_1|^2 \dif\mathbf{Y}\leq
  C \Big(\| \mathbf{{F}}_1\|_{0,\alpha;\mathcal{D}}^{(1-\alpha;\partial\mathcal{D})}
  \Big)^2,\quad \text{for}\quad \alpha \in (\frac12, 1).
\end{equation}
Furthermore, by Cauchy's inequality with $\epsilon$, \eqref{h} implies that
\begin{equation}\label{hh1}
  \| \nabla\tilde{\Phi}\|_{L^2(\mathcal{D})}^2 \leq  \epsilon  \| \nabla\tilde{\Phi}\|_{L^2(\mathcal{D})}^2 + \frac{1}{4\epsilon} \| \mathbf{{F}}_1 \|_{L^2(\mathcal{D})}^2,
\end{equation}
where $\epsilon> 0$ is a sufficiently small constant. Then, applying poincar\'{e} inequality and the inequalities \eqref{k}, \eqref{hh}, one can obtain \eqref{PART1}.

Then we raise the regularity of $\tilde{\Phi}$.

 By employing $\mathbf{{F}}_1\in \mathbf{H}_{0,\alpha}^{(1 - \alpha;\partial \mathcal{D})}(\mathcal{D})$ and Theorem 5.19 of \cite{GM}, it holds that $D\tilde{\Phi}\in \mcc^{0,\alpha}$ for the point $\mathbf{Y}$ away from the boundary of domain $\mathcal{D}$. Therefore, it suffices to estimate $\tilde{\Phi}$ in the case that $\mathbf{Y}$ near the boundary of the domain $\mathcal{D}$. Similar as Theorem 5.21 of \cite{GM}, for any fixed point $\mathbf{Y}_0\in \bar{\mathcal{D}}$ and a constant $R\in\mathbb{R}$ with $0<R<\frac{1}{2}$, let
\begin{align}
  B_R(\mathbf{Y}_0): = \{\mathbf{Y} \in \mathbb{R}^5: |\mathbf{Y} - \mathbf{Y}_0|< R\}, \quad B_R^+\defs B_R(\mathbf{Y}_0)\cap \mathcal{D}.
\end{align}
 Suppose that $\Phi_*\in H^1(B_R^+)$ is a weak solution of
\begin{align}
  &\triangle_{\mathbf{Y}}{\Phi}_* = 0, &\quad &\text{in}\quad {B_R^+},\\
  & \Phi_* = \tilde{\Phi},&\quad &\text{on}\quad \partial B_R^+.
\end{align}
Obviously, the function $\Phi^* : = \tilde{\Phi} - \Phi_*$ satisfies the following relation
\begin{equation}
  \int_{B_R^+}\partial_i \Phi^* \partial_i \zeta \dif\mathbf{Y}=
  \int_{B_R^+}\mathbf{{F}}_1\cdot  \nabla \zeta  \dif\mathbf{Y}, \quad \text{for} \quad \zeta\in H_0^1(B_R^+).
\end{equation}
Taking the test function $\zeta = \Phi^*$, and using H\"{o}lder inequality, it holds that
\begin{align}
  \int_{B_R^+}|\nabla{\Phi}^*|^2 \dif \mathbf{Y} \leq
  \Big(\int_{B_R^+}|\mathbf{{F}}_1|^2 \dif \mathbf{Y}\Big)^{\frac12} \cdot \Big(\int_{B_R^+}|\nabla{\Phi}^*|^2 \dif \mathbf{Y}\Big)^{\frac12},
\end{align}
which yields that
\begin{align}\label{hhh}
  \int_{B_R^+}|\nabla{\Phi}^*|^2 \dif \mathbf{Y}\leq  \int_{B_R^+}|\mathbf{{F}}_1 |^2 \dif \mathbf{Y}.
\end{align}
 Applying $\mathbf{{F}}_1\in \mathbf{H}_{0,\alpha}^{(1 - \alpha;\partial \mathcal{D})}(\mathcal{D})$, one obtains
\begin{align}
 \int_{B_R^+}|\nabla{\Phi}^*|^2 \dif\mathbf{Y} \leq
 \Big(\|\mathbf{F}\|_{0,\alpha;\mathcal{D}}^{(1 - \alpha;\partial \mathcal{D})}\Big)^2\int_{B_R^+}d_{\mathbf{Y}}^{2(\alpha -1)}\dif \mathbf{Y}\leq C \Big(\|\mathbf{F}\|_{0,\alpha;\mathcal{D}}^{(1 - \alpha;\partial \mathcal{D})}\Big)^2 R^{3+2\alpha}.
\end{align}
For any $0< r\leq R$, let $B_r^+ \defs B_r(\mathbf{Y}_0)\cap\mathcal{D}$, similar as step 3 in Theorem 5.21 of \cite{GM}, one has
\begin{equation}
\begin{aligned}
   \int_{B_r^+} |\nabla\tilde{\Phi} |^2 \dif \mathbf{Y}
  &\leq C\left(\Big(\frac{r}{R}\Big)^5 \int_{B_R^+} |\nabla\tilde{\Phi}|^2 \dif \mathbf{Y} +\int_{B_R^+}|\nabla{\Phi}^*|^2 \dif\mathbf{Y}\right)\\
  &\leq C\left(\Big(\frac{r}{R}\Big)^5 \int_{B_R^+} |\nabla\tilde{\Phi}|^2
    \dif\mathbf{Y} + R^{3+ 2\alpha}\Big(\| \mathbf{{F}}_1\|_{0,\alpha;{\mathcal{D}}}^{(1-\alpha;\partial{\mathcal{D}})}
  \Big)^2\right).
  \end{aligned}
  \end{equation}
Then by Lemma 5.13 of \cite{GM}, it follows that
\begin{equation}
  \int_{B_r^+} |\nabla\tilde{\Phi} |^2 \dif \mathbf{Y}\leq C\left(\frac{1}{R^{3+2\alpha}} \int_{B_R^+} |\nabla\tilde{\Phi}|^2
    \dif\mathbf{Y} +\Big( \| \mathbf{{F}}_1\|_{0,\alpha;{\mathcal{D}}}^{(1-\alpha;\partial{\mathcal{D}})} \Big)^2\right) r^{3+ 2\alpha}.
\end{equation}
  Applying Poincar\'{e} inequality (see Proposition 3.12 of \cite{GM}), one has
\begin{align}\label{i}
  \int_{B_r^+} |\tilde{\Phi} - (\tilde{\Phi})_{\mathbf{Y_0}, r}|^2 \dif \mathbf{Y}\leq C \left(\frac{1}{R^{2(3+2\alpha)}}\| \tilde{\Phi}\|_{H^1(B_R^+)} + \| \mathbf{{F}}_1\|_{0,\alpha;{\mathcal{D}}}^{(1-\alpha;\partial{\mathcal{D}})} \right)^2 r^{5+ 2\alpha},
\end{align}
where $(\tilde{\Phi})_{\mathbf{Y}_0, r} = \frac{1}{|B_r^+|}\int_{B_r^+}\tilde{\Phi} \dif \mathbf{Y}$.
Furthermore, applying Theorem 3.1 of \cite{Han}, together with \eqref{i} and Theorem 5.19 of \cite{GM}, one has
\begin{equation}
  \|\tilde{\Phi} \|_{0,\alpha;\mathcal{D}}\leq C \| {F}_1\|_{0,\alpha;\tilde{\Omega}}^{(1 - \alpha; \{\tilde{P}_3,\tilde{P}_4\})}.
\end{equation}
Therefore, by standard elliptic theory (cf.\cite{GI2011,LIE2013}), it follows that
\begin{equation}\label{ic}
	\|\tilde{\Phi} \|_{1,\alpha;\mathcal{D}}^{( - \alpha; \partial \mathcal{D})}\leq C \| {F}_1\|_{0,\alpha;\tilde{\Omega}}^{(1 - \alpha; \{\tilde{P}_3,\tilde{P}_4\})}.
	\end{equation}
By the rotational invariance of the boundary value problem \eqref{eq528}-\eqref{eq528q} and the uniqueness of the solution $\tilde{\Phi}$, the solution itself is rotationally invariant, i.e., it depends only on the variables $x$ and $y $.
Thus, \eqref{ic} implies that
\begin{equation}\label{i1}
	\| \tilde{\Phi} \|_{1,\alpha;\tilde{\Omega}}^{( - \alpha; \{\tilde{P}_3,\tilde{P}_4\})}\leq C \|{F}_1\|_{0,\alpha;\tilde{\Omega}}^{(1 - \alpha; \{\tilde{P}_3,\tilde{P}_4\})}.
	\end{equation}
By \eqref{AA}, which yields that
	\begin{equation}\label{eq30000}
	\partial_{xx} \Phi + \partial_{yy} \Phi  = F_1 - \frac{\partial_y \Phi}{y} + \frac{\Phi}{y^2}= F_1 - \partial_y \tilde{\Phi} \in \mathbf{H}_{0,\alpha}^{(1-\alpha;\{\tilde{P}_3,\tilde{P}_4\})}(\tilde{\Omega}),
	\end{equation}
furthermore, applying Theorem 4.6 of \cite{LIE2013}, one has
	\begin{equation}\label{c1}
	\| {\Phi} \|_{2,\alpha;\tilde{\Omega}}^{(-1-\alpha;\{\tilde{P}_3,\tilde{P}_4\})}
	\leq C \| {F}_1\|_{0,\alpha;\tilde{\Omega}}^{(1 - \alpha;\{\tilde{P}_3,\tilde{P}_4\})}.
	\end{equation}
Therefore, according to the definition of $\Phi$ in \eqref{nn1}, one has
\begin{equation}\label{c20}
	\sum_{i=1}^2\| \mathcal{U}_i \|_{1,\alpha;\tilde{\Omega}}^{(-\alpha;\{\tilde{P}_3,\tilde{P}_4\})}
	\leq C \| {F}_1\|_{0,\alpha;\tilde{\Omega}}^{(1 - \alpha;\{\tilde{P}_3,\tilde{P}_4\})}.
	\end{equation}

	\textbf{Step 3:} In this step, the boundary value problem $\eqref{B}$-$\eqref{B1}$ will be solved. By \eqref{Ww}, there exists a potential function $\Psi$ such that
\begin{equation}\label{x50}
  \nabla {\Psi} = (\partial_x {\Psi}, \partial_y {\Psi}) = (- \mathcal{W}_1,\mathcal{W}_2).
\end{equation}
Then the original problem $\eqref{B}$-$\eqref{B1}$ can be reformulated as the following problem:
\begin{align}
&\partial_x (y \partial_x {\Psi}) + \partial_y(y \partial_y {\Psi}) =  y F_2,&\quad &\text{in}\quad \tilde{\Omega}\label{B2}\\
&-\partial_x {\Psi} = \sqrt{\displaystyle\frac{\mathcal{B}}{\mathcal{A}}}\hbar_1,& \quad &\text{on} \quad {\Sigma}_{\mr{s}}\\
&\partial_y {\Psi} = 0 , &\quad &\text{on} \quad \Sigma_{2}\\
&-\partial_x {\Psi} =  \sqrt{\displaystyle\frac{\mathcal{B}}{\mathcal{A}}}\hbar_3,&\quad &\text{on} \quad \Sigma_3\\
&\partial_y {\Psi} = \hbar_4.&\quad &\text{on} \quad  \Sigma_{4}
\end{align}
 To deal with the singularity near $\Sigma_2$, we define
	\begin{equation}
	\mathbf{x}:= (x_1, x_2, x_3) = (x,y \cos \omega,y \sin \omega),	\end{equation}
	\begin{equation}
	\mathcal {P}: = \{(x_1,x_2,x_3)\in {\mathbb{R}}^3 : x_0 < x_1 < \tilde{L}, 0 \leq x_2^2 + x_3^2 < 1\}.
	\end{equation}
	Then the equation $\eqref{B2}$ becomes
	\begin{equation}\label{t}
	\Delta\Psi= F_2,\quad \text{in}\quad  \mathcal{P},
	\end{equation}
	with the following boundary conditions
	\begin{align}
	&\partial_{x_1} {\Psi}(x_0,x_2,x_3) = -\sqrt{\displaystyle\frac{\mathcal{B}}{\mathcal{A}}}\hbar_1({x}_0, y), &\quad &\text{on} \quad\mathcal{P}_{x_0}\label{eq12000}\\
	&x_2\partial_{x_2} {\Psi}(x_1, x_2, x_3) + x_3\partial_{x_3} \Psi(x_1, x_2, x_3)= y \hbar_4(x_1),&\quad &\text{on} \quad \mathcal{P}_w\\
	&\partial_{x_1}{\Psi}(\tilde{L},x_2, x_3) =  -\sqrt{\displaystyle\frac{\mathcal{B}}{\mathcal{A}}}\hbar_3(\tilde{L},y),&\quad &\text{on} \quad \mathcal{P}_{\tilde{L}}\label{Tt}
	\end{align}
	where $\mathcal{P}_{{x}_0}$, $\mathcal{P}_{\tilde{L}} $ and $\mathcal{P}_w$ represent the entrance, exit and the surface of the pipe $\mathcal{P}$ respectively.

First, by Lax-Milgram theorem, the problem \eqref{t}-\eqref{Tt} has a weak solution ${\Psi}\in H^1 (\mathcal{P})$, if and only if
	\begin{align}\label{eq088}
	\int_{\mathcal{P}}  F_2 \dif \mathbf{x} &=
	\int_{\partial\mathcal{P}} \nabla\Psi\cdot \vec{n} \dif S\notag\\
	& = 2 \pi\left(\sqrt{\frac{\mathcal{B}}{\mathcal{A}}}\int_{0}^{1} y(\hbar_1 - \hbar_3)\dif y  + \int_{x_0}^{\tilde{L}} \hbar_4 \dif x_1\right),
	\end{align}
 where $\vec{n}$ is the unit outer normal vector.

Moreover, assume that $\Psi_1$ and $\Psi_2$ are two solutions to the system \eqref{t}-\eqref{Tt}. Let $\tilde{\Psi}: = \Psi_2- \Psi_1$, then it is easy to see that $\int_{\mathcal{P}} |\nabla\tilde{\Psi}|^2 \dif \mathbf{x}=0$, which yields that $\tilde{\Psi}\equiv constant$. That is, this solution is unique up to an additive constant.

Then we follow the same procedure as the step 2, multiplying $\Psi$ on the both sides of the
equation \eqref{t}, integrating over $\mathcal{P}$ and employing the formula of integration by
parts, for $\alpha\in(\frac12, 1)$, one has
\begin{align}\label{t2}
  \| \nabla\Psi\|_{L^2(\mathcal{P})}^2 \leq&  C\| {F}_2\|_{0,\alpha;\mathcal{P}}^{(1 - \alpha;\partial\mathcal{P} )} \| \Psi\|_{L^2(\mathcal{P})} \\
  &+ C_{(\alpha,\mathcal{P})}\max \Big\{\| \hbar_1 \|_{L^\infty(\mathcal{P}_{{x}_0})},\| \hbar_4\|_{L^\infty(\mathcal{P}_w)},\| \hbar_3\|_{L^\infty(\mathcal{P}_{\tilde{L}})}\Big\} \| \Psi\|_{L^2(\partial\mathcal{P})}\notag.
\end{align}
Without loss of generality, we may assume $\int_{\mathcal{P}} \Psi\dif \mathbf{x} = 0$. Therefore, by Poincar\'{e} inequality, there exists a
constant $C_{(\mathcal{P})}$ such that
 \begin{equation}\label{t1}
   \|\Psi\|_{L^2(\mathcal{P})} \leq C_{(\mathcal{P})} \|\nabla \Psi \|_{L^2(\mathcal{P})}.
 \end{equation}
By applying \eqref{t1} and Trace theorem, then \eqref{t2} implies that
\begin{equation}
	\| \Psi \|_{H^1(\mathcal{P})} \leq C\left( \| F_2 \|_{0,\alpha;\mathcal{P}}^{(1-\alpha;\partial\mathcal{P})} + \| \hbar_1 \|_{1,\alpha;\mathcal{P}_{{x}_0}}^{(-\alpha;\partial\mathcal{P})} +  \| \hbar_3\|_{1,\alpha; \mathcal{P}_{\tilde{L}} }^{(-\alpha;\partial\mathcal{P})} +  \| \hbar_4\|_{1,\alpha;\mathcal{P}_w }^{(-\alpha;\partial\mathcal{P})}\right).
	\end{equation}
Furthermore, by an analogous argument as in step 2, we can obtain the $\mcc^{0,\alpha}$ estimate for $\Psi$. Then, one obtains
\begin{align}\label{eq084}
	&\| \Psi \|_{2,\alpha;\mathcal{P}}^{(-1-\alpha;\partial\mathcal{P})}\notag\\
 \leq& C\left( \| F_2 \|_{0,\alpha;\mathcal{P}}^{(1-\alpha;\partial\mathcal{P})} + \| \hbar_1 \|_{1,\alpha;\mathcal{P}_{{x}_0}}^{(-\alpha;\partial\mathcal{P})} +  \| \hbar_3\|_{1,\alpha; \mathcal{P}_{\tilde{L}} }^{(-\alpha;\partial\mathcal{P})} +  \| \hbar_4\|_{1,\alpha;\mathcal{P}_w }^{(-\alpha;\partial\mathcal{P})}\right).
	\end{align}
By applying the rotational invariance of the boundary value problem and the uniqueness of the solution ${\Psi}$ and \eqref{eq088}, in the $(x,y)$ coordinate, there exists a unique solution $\Psi(x,y)$ if and only if
\begin{align}\label{GGg}
  \int_{\tilde{\Omega}}y F_2 \dif x \dif y =  \sqrt{\frac{\mathcal{B}}{\mathcal{A}}}\int_{0}^{1} y(\hbar_1 - \hbar_3)\dif y  + \int_{x_0}^{\tilde{L}} \hbar_4 \dif x.
\end{align}
Moreover, $\Psi(x,y)$ satisfies
\begin{equation}\label{c2}
\begin{aligned}
	\| \Psi \|_{2,\alpha;\tilde{\Omega}}^{(-1-\alpha;\{\tilde{P}_3,\tilde{P}_4\})}
 \leq& C\left( \| F_2 \|_{0,\alpha;\tilde{\Omega}}^{(1-\alpha;\{\tilde{P}_3,\tilde{P}_4\})}+ \|\hbar_1 \|_{1,\alpha;\Sigma_{\mr{s}}}^{(-\alpha; \tilde{P}_4)}\right)\\
 &+  C\left(\|\hbar_3 \|_{1,\alpha;\Sigma_3}^{(-\alpha;\tilde{P}_3)} +  \|\hbar_4 \|_{1,\alpha;\Sigma_4}^{(-\alpha;\{\tilde{P}_3,\tilde{P}_4\})}\right).
	\end{aligned}
\end{equation}
According to the definition of $\Psi$, it holds that 	
\begin{equation}\label{c21}
\begin{aligned}
	\sum_{i=1}^2\| \mathcal{W}_i \|_{1,\alpha;\tilde{\Omega}}^{(-\alpha;\{\tilde{P}_3,\tilde{P}_4\})}
 \leq& C\left( \| F_2 \|_{0,\alpha;\tilde{\Omega}}^{(1-\alpha;\{\tilde{P}_3,\tilde{P}_4\})}+ \|\hbar_1 \|_{1,\alpha;\Sigma_{\mr{s}}}^{(-\alpha; \tilde{P}_4)}\right)\\
 &+ C\left( \|\hbar_3 \|_{1,\alpha;\Sigma_3}^{(-\alpha;\tilde{P}_3)} +  \|\hbar_4 \|_{1,\alpha;\Sigma_4}^{(-\alpha;\{\tilde{P}_3,\tilde{P}_4\})}\right).
	\end{aligned}
\end{equation}

\textbf{Step 4:}
By recalling that the transformation $\eqref{e}$ and applying \eqref{GGg},
then there exists a unique solution $(H_1, H_2)$ to the boundary value problem $\eqref{eq078}$-$\eqref{eq0078}$ if and only if
\begin{equation}
	\int_{\Omega} \eta \pounds_2 \dif \xi \dif \eta = \mathcal{B}\int_{0}^{1} \eta (\hbar_1 - \hbar_3)\dif \eta +\int_{{\xi}_0}^L \hbar_4 \dif \xi,
	\end{equation}
which is exactly the solvability condition $\eqref{eq082}$.
Moreover, $(H_1, H_2)$ satisfies the following estimate
\begin{align}\label{k11}
&\sum_{i=1}^2 \| H_i \|_{1,\alpha;{\Omega}}^{(-\alpha;\{{P}_3,{P}_4\})} \notag\\
\leq& C\left(  \sum_{i=1}^2 \|\pounds_i \|_{0,\alpha;{\Omega}}^{(1-\alpha;\{{P}_3,{P}_4\})}+ \| \hbar_1 \|_{1,\alpha;\Gamma_{\mr{s}}}^{(-\alpha;P_4)} +  \| \hbar_3 \|_{1,\alpha;\Gamma_3}^{(-\alpha;P_3)} + \| \hbar_4 \|_{1,\alpha;\Gamma_4}^{(-\alpha;\{P_3, P_4\})}   \right).
\end{align}

Thus, the proof of Theorem 3.1 is completed.
\end{proof}

\begin{rem}\label{opp}
In light of Lemma 2.1 of \cite{YH2007},
in the step 2 of the proof, the condition
$ \partial_y\tilde{\Phi}(x,0) = 0$ can be naturally satisfied, which implies that
\begin{align*}
  \Phi(x,0) =\partial_{yy}\Phi(x,0)=0.
\end{align*}
Furthermore, one can obtain that $\mathcal{U}_2 (x,0) = \partial_y \mathcal{U}_1 (x,0)= 0$.
Moreover, in the step 3 of the proof, one has $\partial_y \Psi(x,0) = \partial_{xy}\Psi(x,0) = 0$, it follows that $\mathcal{W}_2(x,0) = \partial_y \mathcal{W}_1(x,0) = 0$.
Therefore, under the coordinate transformations, one can deduce that $ H_2(\xi,0) = \partial_\eta H_1(\xi,0) = 0$ immediately.
\end{rem}

\section{The initial approximation}

In this section, we are going to prove Theorem \ref{thm:initial_approx_existence} and establish the existence of the solution to the free boundary problem {\bf{$\textit{IFBPL}$}}.

\subsection{The solution $ \dot{U}_{-} $ in $ \Omega $}

Since ${M}_- > 1 $, it is obvious that $ \dot{U}_{-} $ is governed by a system of hyperbolic type.
Then it can be solved in $ \Omega $ by applying classical theory for initial-boundary value problems of hyperbolic system of first order.

\begin{lem}\label{lem1}
Suppose $\eqref{eq:assumption_001}$ and $\eqref{eq20000}$ hold, then there exists a unique solution $\dot{U}_-$ satisfying the linearized equations $\eqref{eq80-}$-$\eqref{eq845-}$, and the initial-boundary conditions $\eqref{eq8561}$, $\eqref{eq8564}$, $\eqref{eq451}$. The solution $\dot{U}_-$ satisfies the following estimate:
\begin{equation}\label{g2}
	\|\dot{U}_-\|_{{\mcc}^{2,\alpha}(\bar{\Omega})}  \leq \dot{C} \| \sigma \Theta(\xi) \|_{{\mcc}^{2,\alpha}(\Gamma_4)}  \leq \dot{C}_{-}\sigma,
	\end{equation}
where the constant $\dot{C}_{-}$ depends on $\bar{U}_-$ and $L$.

Moreover, in the domain $\Omega$, it holds that
\begin{align}
	&\dot{p}_- + \bar{\rho}_-\bar{q}_- \dot{q}_- = 0,\label{eq028}\\
& \dot{s}_- = 0.\label{eq028s}
	\end{align}

Finally, for any fixed $\bar{\xi} \in (0, L)$, it holds that
\begin{equation}\label{eq822}
2\frac{1 - {\bar{M}_-}^2}{ {\bar{\rho}_-^2\bar{q}_-^3}} \int_{0}^{1}\eta\dot{p}_-(\bar{\xi}, \eta)\dif \eta
= \int_0^{\bar{\xi}} \sigma \Theta(\xi) \dif \xi.
\end{equation}

\end{lem}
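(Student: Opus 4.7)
My plan is to exploit the hyperbolic character of the linearized system at $\bar U_-$ and to decouple the two scalar conservation laws first. The equation \eqref{eq845-} is $\partial_\xi\dot s_-=0$, and \eqref{eq844-} can be rewritten as $\partial_\xi(\dot p_-+\bar\rho_-\bar q_-\dot q_-)=0$; integrating in $\xi$ against the vanishing initial data \eqref{eq8561} immediately yields $\dot s_-\equiv 0$ and $\dot p_-+\bar\rho_-\bar q_-\dot q_-\equiv 0$ in $\Omega$, giving \eqref{eq028s} and \eqref{eq028}. Consequently $\dot q_-$ is algebraically determined by $\dot p_-$, and only the pair $(\dot\theta_-,\dot p_-)$ remains to be found.

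For the coupled subsystem \eqref{eq80-}, \eqref{eq843-} I would treat $\xi$ as the evolution variable; the coefficient matrix of $\partial_\eta$ has real eigenvalues $\pm\bar\rho_-\bar q_-/(2\sqrt{\bar M_-^2-1})$, so the system is strictly hyperbolic with exactly one incoming and one outgoing characteristic at each of $\eta=0$ and $\eta=1$, and the single Dirichlet condition on $\dot\theta_-$ at each endpoint is admissible. The main difficulty is the singular $0$-order term $\eta^{-1}\dot\theta_-$ in \eqref{eq843-}. To deal with it, my plan is to eliminate $\dot\theta_-$ and work instead with the second-order equation
\begin{equation*}
\partial_\eta^2\dot p_-+\frac{1}{\eta}\partial_\eta\dot p_-=\frac{4(\bar M_-^2-1)}{\bar\rho_-^2\bar q_-^2}\partial_\xi^2\dot p_-,
\end{equation*}
which is exactly the cylindrical form of the wave equation for an axisymmetric function on $\{y^2+z^2<1\}$ with $\xi$ playing the role of time. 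The Cauchy data at $\xi=0$ are zero ($\dot p_-=0$ by \eqref{eq8561} and $\partial_\xi\dot p_-=0$ by evaluating \eqref{eq843-} at $\xi=0$), and the lateral data, derived from \eqref{eq80-} and \eqref{eq8564}, is the Neumann condition $\partial_\eta\dot p_-(\xi,1)=-2\bar q_-\sigma\Theta'(\xi)$. Embedding $\dot p_-$ as an axisymmetric function on the solid cylinder $(0,L)\times\{y^2+z^2<1\}$ via $\eta=\sqrt{y^2+z^2}$ then converts the problem into a standard wave IBVP with smooth axisymmetric Neumann data on the lateral wall, for which well-posedness and $\mcc^{2,\alpha}$ regularity are classical once the corner compatibility at $(\xi,\eta)=(0,1)$ is in place; this is precisely what the hypothesis \eqref{eq20000} provides. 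Once $\dot p_-$ is obtained, I would recover $\dot\theta_-$ by integrating $\partial_\xi\dot\theta_-=-\frac{1}{2\bar q_-}\partial_\eta\dot p_-$ in $\xi$, and a direct check using the evenness of $\dot p_-$ in $\eta$ and the Neumann data yields $\dot\theta_-(\xi,0)=0$ and $\dot\theta_-(\xi,1)=\sigma\Theta(\xi)$. The axis conditions in \eqref{eq451} then follow automatically from the even/odd symmetric extension, and the estimate \eqref{g2} is a standard linear bound for the cylinder wave equation.

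The integral identity \eqref{eq822} will be obtained by a direct integration by parts in \eqref{eq843-}: multiplying by $\eta$ combines the first and third terms into $\partial_\eta(\eta\dot\theta_-)$, so integrating over $\eta\in(0,1)$ and using $\eta\dot\theta_-|_{\eta=0}=0$ together with $\dot\theta_-|_{\eta=1}=\sigma\Theta(\xi)$ gives the ODE $\frac{d}{d\xi}\int_0^1\eta\dot p_-(\xi,\eta)\,d\eta=\frac{\bar\rho_-^2\bar q_-^3}{2(1-\bar M_-^2)}\sigma\Theta(\xi)$, and integrating this from $0$ to $\bar\xi$ with zero initial value yields \eqref{eq822}. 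I expect the hardest step of the whole argument to be controlling the singular factor $\eta^{-1}$ uniformly up to the axis and extracting the corner $\mcc^{2,\alpha}$ estimate at $(\xi,\eta)=(0,1)$; the cylinder embedding and the compatibility assumption \eqref{eq20000} are the two ingredients that make both feasible.
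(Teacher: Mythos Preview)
Your proposal is correct and takes a genuinely different route from the paper for the core part, namely the construction of $(\dot\theta_-,\dot p_-)$.

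The paper diagonalizes \eqref{eq80-}--\eqref{eq843-} into the Riemann invariants
\[
w_\pm = 2\bar q_-\dot\theta_- \pm \frac{2\sqrt{\bar M_-^2-1}}{\bar\rho_-\bar q_-}\dot p_-,
\]
and then integrates the resulting transport equations along the two straight characteristic families, carrying out a Picard iteration region by region. The singular coefficient $\eta^{-1}\dot\theta_-$ is handled directly on the characteristic level: one writes $w_\pm$ as explicit line integrals of $\dot\theta_-/\Upsilon_\pm$ along characteristics and uses the axis conditions $\dot\theta_-(\xi,0)=0$, $\partial_\eta^2\dot\theta_-(\xi,0)=0$ to show these integrals and their derivatives stay bounded as $\eta\to 0$. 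Your approach instead eliminates $\dot\theta_-$, obtains the axisymmetric wave equation for $\dot p_-$, and removes the axis singularity by lifting to a genuine $3$-D cylinder; the problem then becomes a textbook wave IBVP with homogeneous Cauchy data and smooth axisymmetric Neumann data, for which the corner compatibility supplied by \eqref{eq20000} is exactly what one needs. The lifting argument buys you a cleaner treatment of the axis (no case analysis in regions I--IV), at the price of a small extra verification you do not mention: after recovering $\dot\theta_-$ from $\partial_\xi\dot\theta_-=-\tfrac{1}{2\bar q_-}\partial_\eta\dot p_-$, you must check that \eqref{eq843-} is actually satisfied. This is routine (differentiate the residual in $\xi$, use the wave equation for $\dot p_-$, and observe the residual vanishes at $\xi=0$), but it should be stated. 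The treatment of \eqref{eq028}, \eqref{eq028s} and of the integral identity \eqref{eq822} is essentially identical in both arguments: the paper writes \eqref{eq822} as a divergence identity $\partial_\eta(\eta\dot\theta_-)+2\tfrac{\bar M_-^2-1}{\bar\rho_-^2\bar q_-^3}\partial_\xi(\eta\dot p_-)=0$ and integrates over $(0,\bar\xi)\times(0,1)$, which is exactly your iterated integration rephrased via Green's theorem.
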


\begin{proof}

First, the equations \eqref{eq80-}-\eqref{eq843-} can be rewritten as
\begin{equation}\label{M}
\mathcal{M}_1\partial_\xi(\dot{\theta}_-, \dot{p}_-)^T + \mathcal{M}_2\partial_\eta(\dot{\theta}_-, \dot{p}_-)^T + \mathfrak{M} = 0,
\end{equation}
where $\mathfrak{M} = (0,\frac{1}{\eta}\dot{\theta}_-)^T$,
\[ \mathcal{M}_1 = \begin{pmatrix}
	2\bar{q}_- & 0\\
  0& 2\displaystyle\frac{\bar{M}_-^2 -1}{\bar{\rho}_-^2 \bar{q}_-^3},
	
\end{pmatrix},\,\, \mathcal{M}_2=
\begin{pmatrix}
   0& 1\\
   1 & 0
\end{pmatrix}.\]

Direct calculation gives us the eigenvalues and the corresponding left eigenvectors of the matrix $(\mathcal{M}_2 - \lambda\mathcal{M}_1 )$, i.e.
\begin{equation}
\lambda_{\pm} = \pm \displaystyle\frac{\bar{\rho}_- \bar{q}_-}{2\sqrt{\bar{M}_-^2 -1}},\quad  \vec{\ell}_{\pm}=(1, 2\bar{q}_- \lambda_{\pm})^T.
\end{equation}
Then, \eqref{M} implies that
\begin{equation}\label{M1}
  \vec{\ell}_{\pm}\mathcal{M}_1 \partial_\xi(\dot{\theta}_-, \dot{p}_-)^T + \lambda_{\pm}\vec{\ell}_{\pm}\mathcal{M}_1 \partial_\eta(\dot{\theta}_-, \dot{p}_-)^T  +  \vec{\ell}_{\pm}\mathfrak{M} =0.
\end{equation}
That is
\begin{align}\label{i4}
  \partial_\xi \Big(2\bar{q}_-\dot{\theta}_- \pm 2 \displaystyle\frac{\sqrt{\bar{M}_-^2 -1}}{\bar{\rho}_- \bar{q}_-}\dot{p}_-\Big) + \lambda_\pm  \partial_\eta \Big(2\bar{q}_-\dot{\theta}_- \pm 2 \displaystyle\frac{\sqrt{\bar{M}_-^2 -1}}{\bar{\rho}_- \bar{q}_-}\dot{p}_- \Big)&\notag\\
  \pm \displaystyle\frac{\bar{\rho}_- \bar{q}_-^2}{\sqrt{\bar{M}_-^2 -1}}\cdot\frac{1}{\eta}\dot{\theta}_- =& 0.
\end{align}
Let
\begin{equation}\label{kk1}
\begin{aligned}
  w_{\pm}
   =2\bar{q}_-\dot{\theta}_- \pm 2 \displaystyle\frac{\sqrt{\bar{M}_-^2 -1}}{\bar{\rho}_- \bar{q}_-}\dot{p}_-,
\end{aligned}
\end{equation}
then \eqref{i4} can be rewritten as the following form
\begin{align}
&\partial_\xi w_- + \lambda_- \partial_\eta w_- - \displaystyle\frac{\bar{\rho}_- \bar{q}_-^2}{\sqrt{\bar{M}_-^2 -1}}\cdot\frac{1}{\eta}\dot{\theta}_-= 0,\label{g}\\
&\partial_\xi w_+ + \lambda_+ \partial_\eta w_+ + \displaystyle\frac{\bar{\rho}_- \bar{q}_-^2}{\sqrt{\bar{M}_-^2 -1}}\cdot\frac{1}{\eta} \dot{\theta}_- =  0,\label{g1}
\end{align}
in addition, the initial-boundary conditions $\eqref{eq8561}$-$\eqref{eq8564}$ and $\eqref{eq451}$ yield that:
\begin{equation}\label{r}
  w_{\pm} (0,\eta) = 0,\quad
\dot{\theta}_-(\xi,0) = 0,\quad \dot{\theta}_-(\xi, 1) = \sigma \Theta(\xi).
\end{equation}

\begin{figure}[!h]
	\centering
	\includegraphics[width=0.3\textwidth]{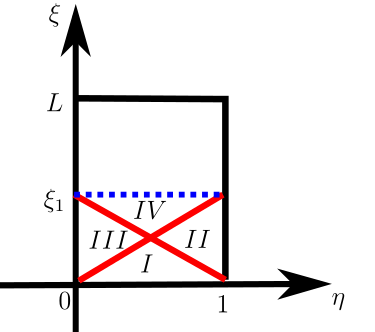}
	\caption{The supersonic flow in the whole domain.\label{fig:6}}
\end{figure}

Using the method of characteristic and for any fixed point $(\xi,\eta)$, letting $\tau\mapsto \Upsilon_\pm(\tau;\xi,\eta)$ be the characteristic line through $(\xi,\eta)$, one has
\begin{equation}\label{eq97}
\displaystyle\frac{\dif \Upsilon_\pm (\tau;\xi,\eta)}{\dif \tau} = \pm \displaystyle\frac{\bar{\rho}_- \bar{q}_-}{2\sqrt{\bar{M}_-^2 -1}},\quad
\Upsilon_\pm (\xi;\xi,\eta)=\eta.
\end{equation}
Direct calculation yields that
\begin{equation}\label{eq98}
  \Upsilon_\pm(\tau;\xi,\eta) = \eta \pm \displaystyle\frac{\bar{\rho}_- \bar{q}_-}{2\sqrt{\bar{M}_-^2 -1}}(\tau - \xi).
\end{equation}

In the domains $I$ and $II$ (see Figure \ref{fig:6}), by the characteristic method and Picard iteration (see \cite{LT1985}), one can obtain the solution $(\dot{\theta}_-,\dot{p}_-)$, which belong to ${\mcc}^{2,\alpha}$. In the domain $III$, since the singularity of the coefficients in the equations \eqref{g}-\eqref{g1}, one needs to pay more attention to the area near the axis $\eta = 0$.

Direct calculations yield that
\begin{equation}\label{r1}
  w_-(\xi ,\eta)= \displaystyle\frac{\bar{\rho}_- \bar{q}_-^2}{\sqrt{\bar{M}_-^2 -1}}\int_0^\xi \frac{\dot{\theta}_-(\tau,\Upsilon_-(\tau;\xi,\eta))}{\Upsilon_-(\tau;\xi,\eta)}\dif \tau.
\end{equation}
Then, employing the equations \eqref{kk1} and \eqref{r}, one has
\begin{equation}
   w_+(\xi ,0)= - w_-(\xi ,0),
\end{equation}
therefore,
\begin{equation}\label{r2}
  \begin{aligned}
     w_+(\xi ,\eta) = - w_-(\xi^* ,0)- \displaystyle\frac{\bar{\rho}_- \bar{q}_-^2}{\sqrt{\bar{M}_-^2 -1}}\int_{\xi^*}^\xi \frac{\dot{\theta}_-(\tau,\Upsilon_+(\tau;\xi,\eta))}{\Upsilon_+(\tau;\xi,\eta)}\dif \tau,
  \end{aligned}
\end{equation}
where $\xi^* = \xi - \displaystyle\frac{2\sqrt{\bar{M}_-^2 -1}}{\bar{\rho}_- \bar{q}_-}\eta$.

 Since $\dot{\theta}_-$ belongs to $\mcc^{2,\alpha}$ in the domain $III$ when $\eta\in (\eta_0, \frac12)$ with any fixed $\eta_0 >0$. Then, by employing the condition $\dot{\theta}_-(\xi,0)= 0$, thus \eqref{r1} and \eqref{r2} are well-defined as $\eta$ tends to zero.

 Moreover, one can follow the proofs in the Chapter1-Chapter2 of the book \cite{LT1985} to obtain the first and second order derivative estimates. For example,
\begin{align}\label{qw}
  &\frac{\partial w_-(\xi ,\eta)}{\partial\eta}\notag\\
  = &\displaystyle\frac{\bar{\rho}_- \bar{q}_-^2}{\sqrt{\bar{M}_-^2 -1}}\int_0^\xi \frac{\Upsilon_-\partial_{\Upsilon_-}\dot{\theta}_-
  (\tau,\Upsilon_-(\tau;\xi,\eta))
   -\dot{\theta}_-(\tau,\Upsilon_-(\tau;\xi,\eta)) }
  {\Upsilon_-^2(\tau;\xi,\eta)}\frac{\partial\Upsilon_-}{\partial\eta}\dif \tau.
\end{align}
Notice that
\begin{align}
  \frac{\eta\partial_\eta \dot{\theta}_-(\xi,\eta)  - \dot{\theta}_-(\xi,\eta)}{\eta^2} \rightarrow -\frac12\partial_\eta^2\dot{\theta}_-(\xi, 0),\quad \text{as}\quad \eta \rightarrow 0+,
\end{align}
thus \eqref{qw} is well-defined as $\eta$ tends to zero. Other cases can be treated in a similar way by applying the conditions $\dot{\theta}_-(\xi,0)=0$ and $\partial_\eta^2\dot{\theta}_-(\xi,0)=0$. Thus, by employing Picard iteration, $w_\pm$ are well defined in $III$ and belong to ${\mcc}^{2,\alpha}$. In the regions $IV$ and $\xi > \xi_1 : = \displaystyle\frac{2\sqrt{\bar{M}_-^2 -1}}{\bar{\rho}_- \bar{q}_-}$, the proof is analogous.
Therefore, $(\dot{\theta}_-, \dot{p}_-)$ satisfies
 \begin{equation}
	\|(\dot{\theta}_-, \dot{p}_-)\|_{{\mcc}^{2,\alpha}(\bar{\Omega})}  \leq \dot{C} \| \sigma \Theta(\xi) \|_{{\mcc}^{2,\alpha}(\Gamma_4)}.
	\end{equation}

 Moreover, by applying \eqref{eq844-} and \eqref{eq845-}, $\eqref{eq028}$-\eqref{eq028s} can be obtained immediately.
 Furthermore, \eqref{g2} holds.

 To show \eqref{eq822} holds, one observes that the equation $\eqref{eq843-}$ implies that
 \begin{align}\label{syy}
   \int_{\Omega_{\bar{\xi}}}\Big(\partial_\eta (\eta \dot{\theta}_-) + 2\displaystyle\frac{{\bar{M}_-}^2 -1}{ {\bar{\rho}_-^2\bar{q}_-^3}}\partial_\xi (\eta \dot{p}_-)\Big) \dif \xi \dif \eta =0,
 \end{align}
 where
 \begin{align*}
  {\Omega}_{\bar{\xi}} = \{ (\xi, \eta)\in \mathbb{R}^2 : 0 < \xi < \bar{\xi}, \,\,\,0 < \eta< 1\}.
\end{align*}
Furthermore, \eqref{syy} yields that
 \begin{align}
  0= &\int_{\partial\Omega_{\bar{\xi}}} 2\displaystyle\frac{{\bar{M}_-}^2 -1}{ {\bar{\rho}_-^2\bar{q}_-^3}}(\eta \dot{p}_-)\dif \eta   - (\eta \dot{\theta}_-) \dif \xi\notag\\
  =&  2\displaystyle\frac{{\bar{M}_-}^2 -1}{ {\bar{\rho}_-^2\bar{q}_-^3}}\int_0^1 \eta \dot{p}_-(\bar{\xi}, \eta)\dif \eta + \int_0^{\bar{\xi}} \sigma \Theta(\xi) \dif \xi,
 \end{align}
which is exactly the equation \eqref{eq822}.
\end{proof}

\subsection{Reformulation of the linearized R-H conditions \eqref{eq090}.}

As previously pointed out in Remark \ref{rem:linearized_RH}, the equations \eqref{eq090} form a closed linear algebraic equations for $ (\dot{p}_+, \dot{q}_+, \dot{s}_+) $ such that they can be expressed by $ \dot{U}_{-} $ on the free boundary  $ \dot{\Gamma}_{\mr{s}} $. Although the computations are almost the same as in \cite{FB63}, we still give them below for convenience of the readers.

First, the equations \eqref{eq090} can be rewritten as the following form:
\begin{equation}\label{A_s}
  A_{\mr{s}} (\dot{p}_+, \dot{q}_+,\dot{s}_+)^T = (\dot{J}_1 , \dot{J}_2,\dot{J}_3)^T,
\end{equation}
where $\dot{J}_i : = -{\mathbf{\alpha}}_{j-} \cdot {\dot{U}}_-$,
\[ A_{\mr{s}} := \displaystyle\frac{[\bar{p}]}{\bar{\rho}_+\bar{q}_+}\begin{pmatrix}
	-\displaystyle\frac{1}{\bar{\rho}_+ \bar{c}_+^2}&-\displaystyle\frac{1}{\bar{q}_+} & \displaystyle\frac{1}{\gamma c_v}\\
1 - \displaystyle\frac{\bar{p}_+}{\bar{\rho}_+ \bar{c}_+^2}& \bar{\rho}_+\bar{q}_+ - \displaystyle\frac{\bar{p}_+}{\bar{q}_+}& \displaystyle\frac{\bar{p}_+}{\gamma c_v}\\
	\displaystyle\frac{\bar{q}_+}{[\bar{p}]}& \displaystyle\frac{\bar{\rho}_+\bar{q}_+^2}{[\bar{p}]}& \displaystyle\frac{\bar{p}_+}{(\gamma - 1)c_v}\displaystyle\frac{\bar{q}_+}{[\bar{p}]}
	\end{pmatrix}. \]	

Then we have the following lemma.

\begin{lem}\label{lem2}
	On $\dot{\Gamma}_{\mr{s}}$, it holds that
	\begin{align}
  & \det A_{\mr{s}}  = \displaystyle\frac{1}{(\gamma -1)c_v} \displaystyle\frac{[\bar{p}]^2 \bar{p}_+}{(\bar{\rho}_+\bar{q}_+)^3}(1 - \bar{M}_+^2)\neq 0,\quad \text{as}\quad \bar{M}_+\neq 1, \label{AS}\\
	&\dot{p}_+  = \displaystyle\frac{\bar{\rho}_+ \bar{q}_+^2}{\bar{M}_+^2 -1}\cdot \displaystyle\frac{\bar{M}_-^2 - 1}{ \bar{\rho}_-\bar{q}_-^2} (1 - \dot{k})\dot{p}_- : = \dot{g}_1,\label{eq10002}\\
	&\dot{q}_+ = \displaystyle\frac{\bar{M}_-^2 - 1}{\bar{\rho}_- \bar{q}_-^2}\left([\bar{p}] - \displaystyle\frac{\bar{\rho}_+ \bar{q}_+^2}{\bar{M}_+^2 -1}(1 - \dot{k})\right)\dot{p}_- : = \dot{g}_2,\label{eq223}\\
	&\dot{s}_+  = - \displaystyle\frac{(\gamma - 1) c_v }{\bar{p}_+}\cdot\displaystyle\frac{\bar{M}_-^2 -1}{\bar{\rho}_- \bar{q}_-^2}[\bar{p}]\dot{p}_- : = \dot{g}_3 ,\label{eq10003}\\
	&{\dot{\psi}}'  = 2\displaystyle\frac{\bar{q}_+\dot{\theta}_+  - \bar{q}_- \dot{\theta}_-}{[\bar{p}]}: = \dot{g}_4,\label{eq10000}
	\end{align}
where $\dot{k} \defs [\bar{p}]\Big(\displaystyle\frac{\gamma -1}{\gamma \bar{p}_+} + \displaystyle\frac{1}{\bar{\rho}_+ \bar{q}_+^2}\Big) >0$.
\end{lem}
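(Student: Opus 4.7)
The plan is to treat \eqref{A_s} as a linear algebraic system of three equations in three unknowns $(\dot{p}_+,\dot{q}_+,\dot{s}_+)$ on $\dot{\Gamma}_{\mr{s}}$, invert it by Cramer's rule, and then recover \eqref{eq10000} separately from \eqref{eq021}. First I would verify \eqref{AS} by direct expansion of the $3\times 3$ determinant of $A_{\mr{s}}$. After factoring $[\bar{p}]/(\bar{\rho}_+\bar{q}_+)$ out of the first two rows, expansion along the third column produces three $2\times 2$ minors whose entries combine through the identities $\bar{c}_+^2=\gamma\bar{p}_+/\bar{\rho}_+$ and $\bar{M}_+^2=\bar{q}_+^2/\bar{c}_+^2$; the factor $1-\bar{p}_+/(\bar{\rho}_+\bar{c}_+^2)=(\gamma-1)/\gamma$ appears naturally and yields \eqref{AS}. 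In particular, $\det A_{\mr{s}}\neq 0$ provided $\bar{M}_+\neq 1$, so inversion is legitimate.

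Next I would compute the data $\dot{J}_i=-\mathbf{\alpha}_{i-}\cdot\dot{U}_-$ using the explicit forms \eqref{eq0100}--\eqref{eq101}. Because the first entry of each $\mathbf{\alpha}_{i-}$ vanishes for $i=1,2,3$, only $(\dot{p}_-,\dot{q}_-,\dot{s}_-)$ enter; invoking Lemma \ref{lem1}, namely $\dot{s}_-\equiv 0$ and $\dot{p}_-+\bar{\rho}_-\bar{q}_-\dot{q}_-=0$, each $\dot{J}_i$ collapses to a scalar multiple of $\dot{p}_-$. Specifically, in $\dot{J}_1$ the $\dot{p}_-$ and $\dot{q}_-$ contributions combine into $\bigl(\tfrac{1}{\bar{\rho}_-\bar{c}_-^2}+\tfrac{1}{\bar{\rho}_-\bar{q}_-\cdot\bar{q}_-}\bigr)\dot{p}_-$, which via $\bar{M}_-^2=\bar{q}_-^2/\bar{c}_-^2$ is proportional to $(\bar{M}_-^2-1)/(\bar{\rho}_-\bar{q}_-^2)\cdot\dot{p}_-$. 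Analogous simplifications apply to $\dot{J}_2$ and $\dot{J}_3$, the latter producing a coefficient involving $[\bar{p}]$ after using $\bar{\rho}_+\bar{q}_+=\bar{\rho}_-\bar{q}_-$ and the Bernoulli jump.

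With the right-hand side reduced to $\dot{p}_-$-multiples, I would then apply Cramer's rule to derive \eqref{eq10002}--\eqref{eq10003}. The cleanest route is first to solve for $\dot{s}_+$ directly from the third row (which already has a transparent structure involving the Bernoulli linearization $\tfrac12 q^2+i$), obtaining \eqref{eq10003}; then eliminate $\dot{s}_+$ from the first two equations and solve the resulting $2\times 2$ system. The compact combination $\dot{k}=[\bar{p}]\bigl(\tfrac{\gamma-1}{\gamma\bar{p}_+}+\tfrac{1}{\bar{\rho}_+\bar{q}_+^2}\bigr)$ emerges after collecting the terms generated by subtracting the energy equation from the mass and momentum equations; the factorizations $\bar{c}_*^2=\bar{q}_+\bar{q}_-$ and $\bar{\rho}_+\bar{q}_+=\bar{\rho}_-\bar{q}_-$ drive the remaining simplifications. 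Finally, \eqref{eq10000} is immediate: substituting $\mathbf{\alpha}_{4\pm}=\pm(\bar{q}_\pm,0,0,0)^T$ into \eqref{eq021} gives $\bar{q}_+\dot{\theta}_+-\bar{q}_-\dot{\theta}_- =\tfrac12[\bar{p}]\dot{\psi}'$, whence the claim.

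The proof is therefore entirely algebraic. The main obstacle is not conceptual but bookkeeping: organizing the cancellations among the background normal-shock identities \eqref{eq39}--\eqref{eqqe} so that the Cramer's-rule output collapses to the compact form involving the single parameter $\dot{k}$, rather than an unwieldy rational expression in $\bar{p}_\pm,\bar{q}_\pm,\bar{\rho}_\pm,\bar{c}_\pm,\gamma,c_v$. A careful choice of which equation to eliminate $\dot{s}_+$ from, and which background identity to invoke at each stage, keeps the computation tractable.
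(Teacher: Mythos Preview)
Your proposal is correct and follows essentially the same approach as the paper, which defers the computation of \eqref{AS}--\eqref{eq10003} to Lemma~3.2 of \cite{FB63} (where precisely this linear-algebraic inversion is carried out) and obtains \eqref{eq10000} exactly as you do, by combining \eqref{eq021} with \eqref{s}. Your outline in fact spells out the mechanics that the cited reference performs: reducing the right-hand side via \eqref{eq028}--\eqref{eq028s} and then inverting $A_{\mr{s}}$, with the background shock identities driving the simplification to the compact form involving $\dot{k}$.
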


\begin{proof}
The proof of \eqref{AS}-\eqref{eq10003} is similar as Lemma 3.2 in \cite{FB63}. Moreover, combining \eqref{eq021} with \eqref{s} yields \eqref{eq10000}.
\end{proof}

\subsection{Determine $ \dot{\xi}_{*} $ and $ \dot{U}_{+} $.}

We are now ready to determine $\dot{\xi}_*$ and $\dot{U}_+$. Applying Theorem 3.1, we can obtain the following lemma.

 \begin{lem}\label{lem3}
Let $\alpha\in (\frac12,1)$. Assume $\eqref{eq:assumption_001}$ and $\eqref{eq20000}$ hold.
 If \begin{equation}\label{eq067}
\mathcal{R}_* < {\mathcal{P}}_{\mr{e}} < \mathcal{R}^*,
\end{equation}
 where
 \begin{align}
   &{\mathcal{P}}_{\mr{e}} := 2\displaystyle\frac{1 - \bar{M}_+^2}{\bar{\rho}_+^2 \bar{q}_+^3}\int_0^1  \eta P_{\mr{e}} (\eta) \dif \eta,\\
   &\mathcal{R}^* : = \int_{0}^{L} \Theta(\xi)\dif \xi,\,\,\, \mathcal{R}_*:
= ( 1 - \dot{k})\int_{0}^{L} \Theta(\xi) \dif \xi,
 \end{align}
 with $\dot{k} \defs [\bar{p}]\Big(\displaystyle\frac{\gamma -1}{\gamma \bar{p}_+} + \displaystyle\frac{1}{\bar{\rho}_+ \bar{q}_+^2}\Big) >0$, then there exists $\dot{\xi}_*\in (0,L)$ such that
\begin{equation}\label{eq066}
\mathcal{R}(\dot{\xi}_*) = {\mathcal{P}}_{\mr{e}}.
\end{equation}

Thus, there exists a unique solution $(\dot{\theta}_+, \dot{p}_+)$ to the equations $\eqref{eq80+}$-$\eqref{eq843+}$ with the boundary conditions $\eqref{eq8584}$-$\eqref{eq8583}$, $\eqref{eq461}$ and $\eqref{eq10002}$. Moreover, the solution satisfies the following estimate:
\begin{align}\label{iz}
 \|(\dot{\theta}_+, \dot{p}_+) \|_{1,\alpha;\dot{\Omega}_+}^{(-\alpha;\{Q_3,Q_4\})}
\leq C \sigma \Big(\| \Theta \|_{\mcc^{2,\alpha}({\Gamma}_4)} + \| P_{\mr{e}} \|_{\mcc^{2,\alpha}(\Gamma_3)}\Big) \leq \dot{C}_+ \sigma,
\end{align}
where the constant $\dot{C}_+$ depends on $\bar{U}_+$, $L$, $\dot{\xi}_*$ and $\alpha$.
 \end{lem}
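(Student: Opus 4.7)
The plan is to first choose $\dot\xi_*\in(0,L)$ so that the solvability condition of Theorem~\ref{sub} is satisfied, then realize the sub-problem for $(\dot\theta_+,\dot p_+)$ as an instance of that abstract problem, and finally read off the estimates.

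\textbf{Step 1 (choice of $\dot\xi_*$).} I would note that under the assumption $\Theta>0$, the function $\mcr$ defined in \eqref{eq:criterion_function} satisfies $\mcr'(z)=-\dot k\,\Theta(z)<0$, and by construction $\mcr(0)=\mcr^*$, $\mcr(L)=\mcr_*$. Hence, under the hypothesis \eqref{eq067}, the intermediate value theorem produces a unique $\dot\xi_*\in(0,L)$ with $\mcr(\dot\xi_*)=\mcp_{\mr e}$, which is \eqref{eq066}.

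\textbf{Step 2 (reduction to Theorem~\ref{sub}).} With $\dot\xi_*$ fixed, I would set $H_1=\dot p_+$, $H_2=\dot\theta_+$, $\mathcal A=2\bar q_+$, and $\mathcal B=2\dfrac{1-\bar M_+^2}{\bar\rho_+^2\bar q_+^3}>0$ (here $\bar M_+<1$ since the flow behind the background shock is subsonic, so $\mathcal{A}\mathcal{B}>0$). Then \eqref{eq80+}--\eqref{eq843+} take exactly the form \eqref{eq078}--\eqref{eq0078} with $\pounds_1=\pounds_2=0$, and the boundary data become
\begin{align*}
\hbar_1=\dot g_1\text{ on }\dot\Gamma_{\mr s},\quad H_2=0\text{ on }\Gamma_2,\quad \hbar_3=\sigma P_{\mr e}(\eta)\text{ on }\Gamma_3,\quad \hbar_4=\sigma\Theta(\xi)\text{ on }\Gamma_4,
\end{align*}
where $\dot g_1$ is given explicitly by \eqref{eq10002} in terms of the supersonic trace $\dot p_-(\dot\xi_*,\cdot)$ already produced by Lemma~\ref{lem1}.

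\textbf{Step 3 (verification of the solvability condition---the main obstacle).} The substantive step is to show that the choice of $\dot\xi_*$ in Step 1 is \emph{exactly} what is needed to satisfy \eqref{eq082}. I would plug the data above into \eqref{eq082} and compute
\begin{align*}
\mathcal B\int_0^1\eta(\dot g_1-\sigma P_{\mr e})\,\dif\eta+\int_{\dot\xi_*}^L\sigma\Theta\,\dif\xi.
\end{align*}
Using \eqref{eq10002} together with the normalization $\bar\rho_+\bar q_+=\bar\rho_-\bar q_-$ coming from the background R--H relations, the first integral simplifies to
\begin{align*}
\mathcal B\int_0^1\eta\dot g_1\,\dif\eta=(1-\dot k)\cdot 2\dfrac{1-\bar M_-^2}{\bar\rho_-^2\bar q_-^3}\int_0^1\eta\dot p_-(\dot\xi_*,\eta)\,\dif\eta=(1-\dot k)\sigma\!\int_0^{\dot\xi_*}\!\Theta\,\dif\xi,
\end{align*}
where the last equality invokes the identity \eqref{eq822} from Lemma~\ref{lem1}. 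Combining with $-\mathcal B\sigma\int_0^1\eta P_{\mr e}=-\sigma\mcp_{\mr e}$ and $\int_{\dot\xi_*}^L\sigma\Theta\,\dif\xi$, the whole expression collapses to $\sigma\bigl(\mcr(\dot\xi_*)-\mcp_{\mr e}\bigr)$, which vanishes by Step 1. This algebraic coincidence is the conceptual heart of the lemma: the solvability obstruction for the elliptic sub-system is precisely the equation that pins down the shock location.

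\textbf{Step 4 (existence, uniqueness and estimate).} Having verified \eqref{eq082}, Theorem~\ref{sub} immediately yields a unique $(\dot\theta_+,\dot p_+)\in\mathbf H_{1,\alpha}^{(-\alpha;\{Q_3,Q_4\})}(\dot\Omega_+)$, with the estimate \eqref{eq083}. I would then bound the boundary data: $\|\hbar_3\|=\sigma\|P_{\mr e}\|_{\mcc^{2,\alpha}}$, $\|\hbar_4\|=\sigma\|\Theta\|_{\mcc^{2,\alpha}}$, and by \eqref{eq10002} and \eqref{g2} from Lemma~\ref{lem1}, $\|\dot g_1\|\lesssim\|\dot p_-\|_{\mcc^{2,\alpha}}\lesssim\sigma$. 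Substitution into \eqref{eq083} gives \eqref{iz}. The only technical care needed is that Theorem~\ref{sub} requires $\alpha\in(\tfrac12,1)$ and appropriate weighted H\"older regularity of the data near the corner points $Q_3,Q_4$; the compatibility $\dot\theta_+=0$ on $\Gamma_2$ (needed so that $H_2=0$ there) together with the vanishing assumption $\Theta(0)=0$ in \eqref{eq20000} ensures the matching at $Q_4$ and, via Remark~\ref{opp} applied to the sub-problem, the axis conditions in \eqref{eq461} are automatic.
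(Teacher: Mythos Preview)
Your proposal is correct and follows essentially the same route as the paper: both reduce the elliptic sub-problem for $(\dot\theta_+,\dot p_+)$ to Theorem~\ref{sub} with the identical choice of $\mathcal A,\mathcal B,H_1,H_2$ and boundary data, and both identify the solvability condition \eqref{eq082} with the equation $\mcr(\dot\xi_*)=\mcp_{\mr e}$ via \eqref{eq10002} and \eqref{eq822}. The only difference is cosmetic ordering --- you fix $\dot\xi_*$ first by the intermediate value theorem and then verify \eqref{eq082}, whereas the paper starts from \eqref{eq082} and simplifies it down to $\mcr(\dot\xi_*)=\mcp_{\mr e}$ before invoking monotonicity of $\mcr$; the substance is identical. (One small slip: your remark in Step~4 that ``$\Theta(0)=0$ ensures the matching at $Q_4$'' is off --- $Q_4$ sits at $(\dot\xi_*,1)$, not at $\xi=0$, and the corner behavior is handled by the weighted norms, not by \eqref{eq20000}; but this does not affect the argument.)
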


\begin{proof}
 Applying Theorem \ref{sub} and taking
 \begin{align}\label{eq01}
&\mathcal{A }: = 2\bar{q}_+,\quad \mathcal{B}: = 2\frac{1 - \bar{M}_+^2}{ \bar{\rho}_+^2 \bar{q}_+^3}, \quad  H_1 :=  \dot{p}_+ ,\quad H_2 := \dot{\theta}_+,\notag\\
&\pounds_1 = \pounds_2 = 0,\quad \hbar_1 := \dot{g}_1,\quad \hbar_3 := \sigma \dot{P}_{\mr{e}} ,\quad \hbar_4 := \sigma \dot{\Theta},
\end{align}
 one has that there exists a unique solution $(\dot{\theta}_+, \dot{p}_+)$ to the initial linearized problem $\eqref{eq80+}$-$\eqref{eq843+}$ with the boundary conditions $\eqref{eq8584}$-$\eqref{eq8583}$, $\eqref{eq461}$ and $\eqref{eq10002}$ as long as
\begin{equation}\label{eq150}
\begin{aligned}
0 =  2\displaystyle\frac{1 - \bar{M}_+^2}{\bar{\rho}_+^2 \bar{q}_+^3}\int_0^1  \eta\cdot\Big(\dot{g}_1(\dot{\xi}_*,\eta) - \sigma \dot{P}_{\mr{e}}\Big)\dif \eta
+\int_{\dot{\xi}_*}^{L} \sigma \dot{\Theta} \dif \xi.
\end{aligned}
\end{equation}
Applying \eqref{eq10002} in Lemma \ref{lem2}, one has
\begin{equation}
\begin{aligned}
 0 = &2\displaystyle\frac{1}{\bar{\rho}_+\bar{q}_+}\displaystyle\frac
 {1- \bar{M}_-^2 }{\bar{\rho}_- \bar{q}_-^2}(1 - \dot{k}) \int_0^1 \eta \dot{p}_- (\dot{\xi}_*,\eta)\dif \eta -
 2\displaystyle\frac{1 - \bar{M}_+^2}{\bar{\rho}_+^2 \bar{q}_+^3}\sigma \int_0^1 \eta P_{\mr{e}} (\eta)\dif \eta \\
 &+\int_{\dot{\xi}_*}^{L} \sigma \Theta(\xi) \dif \xi.
\end{aligned}
\end{equation}
Then, by employing $\eqref{eq822}$ in Lemma \ref{lem1}, it holds that
\begin{equation}\label{eq156}
(1 - \dot{k})\int_0^{\dot{\xi}_*} \sigma \Theta(\xi)\dif \xi  -  2\displaystyle\frac{1 - \bar{M}_+^2}{\bar{\rho}_+^2 \bar{q}_+^3}\sigma \int_0^1 \eta P_{\mr{e}} (\eta)\dif \eta  + \int_{\dot{\xi}_*}^{L} \sigma \Theta(\xi) \dif \xi= 0,
\end{equation}
that is
\begin{align}\label{eq063}
\int_{0}^{L} \Theta(\xi) \dif \xi   - \dot{k}\int_0^{\dot{\xi}_*} \Theta(\xi)\dif \xi = 2\displaystyle\frac{1 - \bar{M}_+^2}{\bar{\rho}_+^2 \bar{q}_+^3}\int_0^1 \eta P_{\mr{e}} (\eta)\dif \eta.
\end{align}
Let
\begin{align}
\mathcal{R}(\xi) : =& \int_{0}^{L} \Theta(\tau) \dif \tau  - \dot{k}\int_0^{\xi} \Theta(\tau)\dif \tau,\\
{\mathcal{P}}_{\mr{e}} := & 2\displaystyle\frac{1 - \bar{M}_+^2}{\bar{\rho}_+^2 \bar{q}_+^3}\int_0^1  \eta P_{\mr{e}} (\eta) \dif \eta.
\end{align}
If $\eqref{eq:assumption_001}$ holds, it is easy to see that $\mathcal{R}'(\xi)= -\dot{k} \Theta(\xi) < 0$ for all $\xi\in (0 , L)$, which implies that $\mathcal{R}$ is strictly decreasing, then one has
\begin{align}
&\mathcal{R}^*\defs \sup\limits_{\xi\in(0,L)}\mathcal{R} (\xi) = \mathcal{R}(0) = \int_{0}^{L} \Theta(\xi)\dif \xi,\\
 &\mathcal{R}_*\defs \inf\limits_{\xi\in(0,L)}\mathcal{R}(\xi) =\mathcal{R}(L) = ( 1 - \dot{k})\int_{0}^{L} \Theta(\xi) \dif \xi.
\end{align}
Obviously, there exists a unique $\dot{\xi}_*$ such that $\mathcal{R}(\dot{\xi}_*) =  {\mathcal{P}}_{\mr{e}}$ if and only if \begin{equation}\label{x1}
( 1 - \dot{k})\int_{0}^{L} \Theta(\xi) \dif \xi<  {\mathcal{P}}_{\mr{e}}  < \int_{0}^{L} \Theta(\xi) \dif \xi.
\end{equation}
 Moreover, applying $\eqref{eq083}$ of Theorem \ref{sub}, one obtains
\begin{equation}
 \|(\dot{\theta}_+, \dot{p}_+) \|_{1,\alpha;\dot{\Omega}_+}^{(-\alpha;\{Q_3,Q_4\})} \leq C\left(\|\dot{g}_1 \|_{1,\alpha;{\dot{\Gamma}_{\mr{s}}}}^{(-\alpha;Q_4)} +  \sigma\|\Theta \|_{\mcc^{2,\alpha}(\Gamma_4)} + \sigma \|P_{\mr{e}} \|_{\mcc^{2,\alpha}(\Gamma_3)}\right).
\end{equation}
By employing Lemma \ref{lem1} and Lemma \ref{lem2}, $\eqref{iz}$ can be obtained immediately.

\end{proof}

Once $\dot{\xi}_*$ is determined, then we can determine $\dot{U}_+$ in the domain $\dot{\Omega}_+$, and obtain the following lemma:

 \begin{lem}\label{lem5}
    Let $\alpha\in (\frac12,1)$. Under the assumptions of Lemma \ref{lem3}, then there exists a unique solution $\dot{U}_+$ satisfying the linearized equations $\eqref{eq80+}$-$\eqref{eq845+}$ in $\dot{\Omega}_+$ and the boundary conditions $\eqref{eq8584}$-$\eqref{eq8583}$, $\eqref{eq461}$ and $\eqref{eq10002}$-\eqref{eq10003}. Moreover, one has the following estimate:
\begin{equation}\label{eq089}
\begin{aligned}
 \|\dot{U}_+ \|_{(\dot{\Omega}_+, \dot{\Gamma}_{\mr{s}})}+ \| \dot{\psi}'\|_{1,\alpha;\dot{\Gamma}_{\mr{s}}}^{(-\alpha;Q_4)}
\leq C \sigma \left(\| \Theta \|_{\mcc^{2,\alpha}({\Gamma}_4)} + \| P_{\mr{e}} \|_{\mcc^{2,\alpha}(\Gamma_3)}\right) \leq \dot{C}_+ \sigma,
\end{aligned}
\end{equation}
where the constant $\dot{C}_+$ depends on $\bar{U}_+$, $L$, $\dot{\xi}_*$ and $\alpha$.
 \end{lem}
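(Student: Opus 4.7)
\medskip

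\textbf{Proof Plan for Lemma \ref{lem5}.} The strategy is to construct the remaining components $\dot{q}_+$ and $\dot{s}_+$ by integrating the transport equations \eqref{eq844+} and \eqref{eq845+} along the $\xi$-characteristics, using the R--H conditions \eqref{eq223} and \eqref{eq10003} as boundary data on the free boundary $\dot{\Gamma}_{\mr{s}}=\{\xi=\dot{\xi}_*\}$, and then to obtain $\dot{\psi}'$ from the identity \eqref{eq10000}. The $(\dot{\theta}_+,\dot{p}_+)$ part together with the constant $\dot{\xi}_*$ has already been provided by Lemma \ref{lem3}, so what remains is to solve the hyperbolic part and assemble all the estimates in the composite norm \eqref{eq099}.

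\medskip

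First, I would observe that \eqref{eq845+} gives $\dot{s}_+(\xi,\eta)=\dot{s}_+(\dot{\xi}_*,\eta)=\dot{g}_3(\eta)$, and \eqref{eq844+} gives
\begin{equation*}
\bar{q}_+\dot{q}_+(\xi,\eta)+\tfrac{1}{\bar{\rho}_+}\dot{p}_+(\xi,\eta)+\bar{T}_+\dot{s}_+(\xi,\eta)=\bar{q}_+\dot{g}_2(\eta)+\tfrac{1}{\bar{\rho}_+}\dot{g}_1(\eta)+\bar{T}_+\dot{g}_3(\eta),
\end{equation*}
so that $\dot{q}_+$ is defined explicitly by
\begin{equation*}
\dot{q}_+(\xi,\eta)=\dot{g}_2(\eta)+\tfrac{1}{\bar{\rho}_+\bar{q}_+}\bigl(\dot{g}_1(\eta)-\dot{p}_+(\xi,\eta)\bigr).
\end{equation*}
Uniqueness of $\dot{q}_+,\dot{s}_+$ is immediate since the $\xi$-transport equations are first order with prescribed data on $\dot{\Gamma}_{\mr{s}}$. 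Then $\dot{\psi}'$ is defined on $\dot{\Gamma}_{\mr{s}}$ by \eqref{eq10000}. The compatibility conditions \eqref{eq461} at $\eta=0$ have to be verified: since $\dot{p}_-, \dot{s}_-, \dot{q}_-$ satisfy the axis conditions \eqref{eq451} by Lemma \ref{lem1}, the functions $\dot{g}_1,\dot{g}_2,\dot{g}_3$ (which are linear combinations of $\dot{p}_-|_{\xi=\dot{\xi}_*}$) inherit the vanishing of $\partial_\eta$ at $\eta=0$; combined with the axis behaviour of $\dot{p}_+$ already established in Lemma \ref{lem3} and Remark \ref{opp}, this gives $\partial_\eta\dot{q}_+=\partial_\eta\dot{s}_+=0$ on $\Gamma_2\cap\overline{\dot{\Omega}_+}$.

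\medskip

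For the estimates, Lemma \ref{lem3} directly provides the bound on $\|(\dot{\theta}_+,\dot{p}_+)\|_{1,\alpha;\dot{\Omega}_+}^{(-\alpha;\{Q_3,Q_4\})}$. For the traces on $\dot{\Gamma}_{\mr{s}}$, Lemma \ref{lem1} gives $\|\dot{p}_-\|_{\mcc^{2,\alpha}(\bar{\Omega})}\le \dot C_-\sigma$, and since $\dot{g}_2,\dot{g}_3$ are linear multiples of $\dot{p}_-|_{\dot{\Gamma}_{\mr{s}}}$ by Lemma \ref{lem2}, we control $\|(\dot{q}_+,\dot{s}_+)\|_{1,\alpha;\dot{\Gamma}_{\mr{s}}}^{(-\alpha;Q_4)}$ by $C\sigma$. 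For the interior weighted norm $\|(\dot{q}_+,\dot{s}_+)\|_{0,\alpha;\dot{\Omega}_+}^{(1-\alpha;\{Q_3,Q_4\})}$, I would use the explicit representation: $\dot{s}_+$ depends only on $\eta$ so its norm reduces to that of $\dot{g}_3$ on $\dot{\Gamma}_{\mr{s}}$, while $\dot{q}_+$ is a sum of an $\eta$-dependent function (controlled by $\dot{g}_1,\dot{g}_2$) and a multiple of $\dot{p}_+$ already estimated in Lemma \ref{lem3}. Finally, $\|\dot{\psi}'\|_{1,\alpha;\dot{\Gamma}_{\mr{s}}}^{(-\alpha;Q_4)}$ is controlled via \eqref{eq10000} by the traces of $\dot{\theta}_+$ on $\dot{\Gamma}_{\mr{s}}$ (from Lemma \ref{lem3}) and of $\dot{\theta}_-$ (from Lemma \ref{lem1}).

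\medskip

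The main obstacle, I expect, will be purely bookkeeping: one has to check that the weight exponents in the various Hölder norms are consistent. In particular, while $\dot{g}_2,\dot{g}_3$ extracted from $\dot{p}_-\in\mcc^{2,\alpha}$ live in the \emph{unweighted} space on $\dot{\Gamma}_{\mr{s}}$, the target norm in the interior is the \emph{weighted} $\mathbf{H}_{0,\alpha}^{(1-\alpha;\{Q_3,Q_4\})}$ space; but since unweighted $\mcc^{0,\alpha}$ embeds continuously into this weighted space on a bounded domain, this transition is harmless. The more delicate point is the $(1-\alpha)$-weighted Hölder estimate of $\dot{q}_+$ near $Q_3$, which is inherited from the corresponding estimate of $\dot{p}_+$ near $Q_3$ proved in Lemma \ref{lem3}; this is precisely where the condition $\alpha\in(\tfrac12,1)$ is used, via Theorem \ref{sub}. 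Once these points are settled, combining the three partial estimates gives \eqref{eq089} and completes the proof.
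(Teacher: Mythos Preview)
Your proposal is correct and follows essentially the same approach as the paper's proof: integrate the transport equations \eqref{eq844+}--\eqref{eq845+} from $\dot{\Gamma}_{\mr{s}}$ using the R--H data $\dot{g}_2,\dot{g}_3$, read off $\dot{\psi}'$ from \eqref{eq10000}, and assemble the estimates by combining Lemma~\ref{lem1}, Lemma~\ref{lem2}, and Lemma~\ref{lem3}. The paper's proof is slightly terser (it defers the verification of the axis conditions \eqref{eq461} to a separate remark rather than including it in the proof), but the content is the same.
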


\begin{proof}
It suffices to show the existence of $(\dot{q}_+, \dot{s}_+,\dot{\psi}')$ and establish the estimates.

 It is obvious that \eqref{eq844+} and \eqref{eq845+} indicate that
\begin{align}
  &\dot{s}_+(\xi,\eta) = \dot{s}_+(\dot{\xi}_*,\eta),\\
  &\dot{q}_+(\xi, \eta) = \displaystyle\frac{1}{\bar{q}_+}\left(\Big(\bar{q}_+ \dot{q}_+ + \displaystyle\frac{1}{\bar{\rho}_+} \dot{p}_+ + \bar{T}_+ \dot{s}_+\Big)(\dot{\xi}_*,\eta) - \Big(\displaystyle\frac{1}{\bar{\rho}_+} \dot{p}_+ + \bar{T}_+ \dot{s}_+\Big)(\xi,\eta)  \right).
\end{align}
Therefore, there exists a unique solution $(\dot{q}_+ ,\dot{s}_+)$ satisfying
\begin{equation}
\begin{aligned}
  &\|(\dot{q}_+, \dot{s}_+) \|_{1,\alpha;\dot{\Gamma}_{\mr{s}}}^{(-\alpha;Q_4)}
  + \| (\dot{q}_+,\dot{s}_+) \|_{0,\alpha;\dot{\Omega}_+}^{(1-\alpha;\{Q_3,Q_4\})}\\
  \leq & C\Big(\sum_{i=1}^{3} \|\dot{g}_i \|_{1,\alpha;{\dot{\Gamma}_{\mr{s}}}}^{(-\alpha;Q_4)}+
  \|\dot{p}_+ \|_{0,\alpha;\dot{{\Omega}}_+}^{(1-\alpha;\{Q_3,Q_4\})} \Big).
\end{aligned}
\end{equation}

Finally, according to the definition of $ \dot{\psi}'$ in \eqref{eq10000}, there exists a unique $\dot{\psi}'$ satisfying
\begin{equation}
  \| \dot{\psi}'\|_{1,\alpha;\dot{\Gamma}_{\mr{s}}}^{(-\alpha;Q_4)} \leq C \Big(\|\dot{\theta}_+ \|_{1,\alpha;{\dot{\Gamma}_{\mr{s}}}}^{(-\alpha;Q_4)} + \|\dot{\theta}_- \|_{1,\alpha;{\dot{\Gamma}_{\mr{s}}}}^{(-\alpha;Q_4)} \Big).
\end{equation}

By employing Lemma \ref{lem1} and Lemma \ref{lem2}, $\eqref{eq089}$ can be obtained immediately.

\end{proof}

\begin{rem}\label{axis}
On the the axis $\eta = 0$, the condition \eqref{eq461} is naturally satisfied.

First, according to Remark \ref{opp}, it follows that $\dot{\theta}_+(\xi,0) = 0$ and $\partial_\eta \dot{p}_+ (\xi,0)=0$.
Moreover, differentiating both sides of equation \eqref{eq843+} with respect to $\eta$, one can obtain that $\partial_\eta^2 \dot{\theta}_+(\xi,0) = 0$.
Finally, according to $\partial_\eta \dot{p}_-(\xi,0) = 0$, it follows from  $ \eqref{eq10002}-\eqref{eq10003}$ that
 \begin{align}\label{wy}
   \partial_\eta (\dot{p}_+, \dot{q}_+,\dot{s}_+)(\dot{\xi}_*, 0) = 0.
 \end{align}
Differentiating both sides of equation \eqref{eq845+} with respect to $\eta$ gives that
\begin{equation}
  \partial_\xi(\partial_\eta\dot{s}_+) = 0,
\end{equation}
by applying \eqref{wy}, it is easy to see that $\partial_\eta\dot{s}_+(\xi,0) = 0$. Differentiating both sides of equation \eqref{eq844+} with respect to $\eta$, one can also obtain $\partial_\eta  \dot{q}_+(\xi,0) = 0$.
\end{rem}

\section{The nonlinear iteration scheme and the linearized problem}

In this section, we shall take solution $ (\dot{U}_{-},\ \dot{U}_{+},\ \dot{\xi}_*;\ \dot{\psi}') $ as an initial approximating solution and design a nonlinear iteration scheme to determine the shock solution to the problem {\bf $\textit{FBPL}$}.

\subsection{The supersonic flow $U_-$ in $\Omega$}

If the shock front does not appear in $\Omega$, then we have the following lemma with respect to $U_-$.

\begin{lem}\label{lem51}
   Suppose $\eqref{eq:assumption_001}$ and $\eqref{eq20000}$ hold, then there exists a positive constant $\sigma_L$ depending on $\bar{U}_-$ and $L$, such that for any $0<\sigma< \sigma_L$, there exists a unique solution $U_-\in {\mcc}^{2,\alpha}(\bar{\Omega})$ to the equations $\eqref{eq2916}$-$\eqref{eq2919}$ with the initial-boundary conditions $\eqref{eq808}$-$\eqref{eq909}$ and $\eqref{45}$, moreover, denote $U_- : = \bar{U}_- + \delta U_-$, then the following estimates hold for $\alpha \in (\frac12,1)$:
\begin{align}
 &\|\delta U_- \|_{{\mcc}^{2,\alpha}(\bar{\Omega})} \leq  C_{L} \sigma,\label{eq837}\\
 &\|\delta U_- - \dot{U}_- \|_{\mcc^{1,\alpha}(\bar{\Omega})}
  \leq C_{L} \sigma^2,\label{eq838}
\end{align}
where the constant $C_{L}$ depends on $\bar{U}_-$ and $L$.

 \end{lem}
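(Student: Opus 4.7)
The plan is to treat the problem for $U_-$ as a quasilinear perturbation of its linearization at $\bar U_-$, using the linear theory already developed for $\dot U_-$ in Lemma \ref{lem1} as the engine of a Picard iteration.

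First, I would rewrite the full nonlinear system \eqref{eq2916}--\eqref{eq2919} for $U = \bar U_- + \delta U_-$ in the form
\[
\mathcal{M}_1(\bar U_-)\,\partial_\xi(\delta\theta_-,\delta p_-)^T + \mathcal{M}_2\,\partial_\eta(\delta\theta_-,\delta p_-)^T + \mathfrak{M}(\bar U_-)\delta\theta_-/\eta = \mathcal{F}(\delta U_-,\nabla\delta U_-),
\]
together with transport equations for $\delta q_-$ and $\delta s_-$, where $\mathcal{F}$ collects all nonlinear contributions and vanishes quadratically at $\delta U_- = 0$. The entry condition \eqref{eq808} becomes $\delta U_- = 0$ on $\Gamma_1$, the slip condition \eqref{eq909} becomes $\delta\theta_- = \sigma\Theta$ on $\Gamma_4$, and \eqref{45} supplies the axis conditions, including $\delta\theta_-(\xi,0) = 0$ and $\partial_\eta^2\delta\theta_-(\xi,0) = 0$, which are precisely what is needed to make the singular $1/\eta$ term harmless.

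Next, I would set up a Picard scheme: given an approximation $\delta U_-^{(n)}$ in a ball $\{\|V\|_{\mathcal C^{2,\alpha}(\bar\Omega)}\le C_L\sigma\}$, define $\delta U_-^{(n+1)}$ as the solution of the linearized system \eqref{eq80-}--\eqref{eq845-} with source $\mathcal{F}(\delta U_-^{(n)},\nabla\delta U_-^{(n)})$ on the right, still with boundary data $(0,\sigma\Theta)$ and the axis conditions from \eqref{45}. Each step is solved exactly as in Lemma \ref{lem1}: diagonalize the principal part to obtain two Riemann invariants $w_\pm$ with characteristics \eqref{eq97}, decompose $\bar\Omega$ into the four zones delimited by the characteristics issuing from the corners of $\Gamma_1$ and the corners on $\Gamma_4$ (see Figure \ref{fig:6}), and integrate along them, using the compatibility $\delta\theta_-(\xi,0) = \partial_\eta^2\delta\theta_-(\xi,0)=0$ plus l'Hôpital to render the $\dot\theta_-/\eta$-type terms $\mathcal C^{0,\alpha}$ up to the axis. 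The boundary data obey $\|(0,\sigma\Theta)\|_{\mathcal C^{2,\alpha}}\lesssim\sigma$ and $\|\mathcal F(\delta U_-^{(n)},\nabla\delta U_-^{(n)})\|_{\mathcal C^{1,\alpha}}\lesssim(C_L\sigma)^2$, so Lemma \ref{lem1} (together with the higher-order analogue for the $\mathcal C^{2,\alpha}$ estimate) gives $\|\delta U_-^{(n+1)}\|_{\mathcal C^{2,\alpha}}\le C_1\sigma + C_2(C_L\sigma)^2$, which, for $C_L$ chosen large and $\sigma$ small, closes the ball. Contractivity in a weaker norm, say $\mathcal C^{1,\alpha}$, follows analogously from the Lipschitz dependence of $\mathcal F$ on its arguments, yielding a unique fixed point $\delta U_-$ and the bound \eqref{eq837}.

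For \eqref{eq838}, I would subtract the linearized system \eqref{eq80-}--\eqref{eq845-} satisfied by $\dot U_-$ from the nonlinear system satisfied by $\delta U_-$. The difference $W \defs \delta U_- - \dot U_-$ then solves the \emph{same} linear system (at $\bar U_-$) with zero boundary and axis data, and with right-hand side $\mathcal F(\delta U_-,\nabla\delta U_-)$, which is $O(\sigma^2)$ in $\mathcal C^{0,\alpha}$ by \eqref{eq837}. Applying Lemma \ref{lem1} once more, now in its $\mathcal C^{1,\alpha}$ form with nonzero source, gives $\|W\|_{\mathcal C^{1,\alpha}(\bar\Omega)}\lesssim\sigma^2$, establishing \eqref{eq838}.

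The principal obstacle is, as in Lemma \ref{lem1}, the singular coefficient $1/\eta$ in the $\theta$-equation \eqref{eq2917} combined with the full nonlinear term $\frac{2\eta}{r^2}\cdot\frac{\sin\theta}{\rho q}$, because one must verify that the compatibility conditions from \eqref{45} are preserved along the Picard iteration and that the nonlinear source $\mathcal F$ itself vanishes sufficiently fast at $\eta=0$ to remain in the weighted/classical Hölder space where the linear theory operates. This is handled by differentiating the equations in $\eta$, evaluating at $\eta=0$, and using the axisymmetric compatibility in \eqref{45} together with the structure $\sin\theta/r^2 = O(\theta\cdot\eta^{-2})$ combined with $\theta(\xi,0)=\partial_\eta^2\theta(\xi,0)=0$; the resulting Taylor expansion shows that each iterate and its source have the required regularity across the axis, so the scheme is well-defined and the two estimates follow.
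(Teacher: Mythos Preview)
Your proposal is correct and follows essentially the same approach as the paper: existence of $U_-$ via a Picard/characteristic iteration built on the linear theory of Lemma~\ref{lem1}, and the higher-order estimate \eqref{eq838} by observing that $\delta U_- - \dot U_-$ solves the linearized system at $\bar U_-$ with homogeneous boundary/axis data and a quadratic source $\mathcal{F}(\delta U_-)$, whose $\mathcal{C}^{1,\alpha}$ norm is $O(\sigma^2)$ by \eqref{eq837}. The paper is somewhat terser (it simply cites the characteristic method and Picard iteration of \cite{LT1985} for existence, and writes the quadratic source as $F(\delta U_-) = (B_1(\bar U_-)-B_1(U_-))\partial_\xi\delta U_- - (b(U_-)-b(\bar U_-)-\nabla b(\bar U_-)\delta U_-)$), but the substance is the same, including your attention to the axis compatibility from \eqref{45} for handling the $1/\eta$ singularity.
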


\begin{proof}
Similar as Lemma \ref{lem1}, the unique existence of the solution $U_-\in {\mcc}^{2,\alpha}(\bar{\Omega})$ can be obtained by employing the characteristic method and Picard iteration as in the book \cite{LT1985}. Thus, it suffices to show that $\eqref{eq838}$ holds.

The equations $\eqref{eq2916}$-$\eqref{eq2919}$ can be rewritten as the following matrix form:
\begin{equation}\label{eq183}
B_0 \partial_\eta U + B_1 (U) \partial_\xi U + b(U)  = 0,
\end{equation}
where $U= (\theta , p, q, s)^T$, $b(U) = (0, \displaystyle\frac{2\eta}{r^2}\displaystyle\frac{\sin\theta}{\rho q}, 0, 0)^T,$ and
\[ B_0 = \begin{pmatrix}
   0 & 1 & 0 & 0 \\
   1 & 0 & 0 & 0 \\
   0 & 0 & 0 & 0 \\
   0 & 0 & 0 & 0
\end{pmatrix},\quad B_1(U)=
\begin{pmatrix}
   \displaystyle\frac{2\eta}{r}q\cos\theta &-\displaystyle\frac{2\eta}{r}\displaystyle\frac{\sin\theta}{\rho q}  & 0 & 0 \\
 -\displaystyle\frac{2\eta}{r}\displaystyle\frac{\sin\theta}{\rho q}& \displaystyle\frac{2\eta}{r}\displaystyle\frac{\cos\theta}{\rho q }\displaystyle\frac{M^2 -1}{\rho q^2} & 0 & 0 \\
  0 & 1&  \rho q & 0 \\
  0 & 0 &  0 & 1
\end{pmatrix} .\]
Therefore, $\delta U_- - \dot{U}_-$ satisfies the following equation
\begin{align}
   &B_0 \partial_\eta (\delta U_- - \dot{U}_-) + B_1(\bar{U}_-) \partial_\xi(\delta U_- - \dot{U}_-) +\nabla b(\bar{U}_-) (\delta U_- - \dot{U}_-)\notag\\
   = & F(\delta U_-), \quad \text{in} \quad \Omega
\end{align}
with the initial-boundary conditions
\begin{align*}
& \delta U_- - \dot{U}_- = 0,  &\text{on} \quad \Gamma_1\\
& \delta \theta_- - \dot{\theta}_- = 0,  \partial_\eta (\delta p_--\dot{p}_-, \delta q_- - \dot{q}_-, \delta s_- - \dot{s}_-) = 0, \partial_\eta^2 (\delta \theta_- - \dot{\theta}_-) = 0,&\text{on} \quad \Gamma_2\\
&\delta \theta_- - \dot{\theta}_- = 0, &\text{on} \quad \Gamma_4
 \end{align*}
where
\begin{align}
  F(\delta U_-):  =&  \Big( B_1(\bar{U}_-) - B_1({U}_-) \Big)\partial_\xi \delta U_- -\Big(b(U_-) - b(\bar{U}_-) - \nabla b(\bar{U}_-) \delta U_-\Big).
\end{align}

Similar as the proof of Lemma \ref{lem1}, one can obtain that
\begin{equation}\label{eq186}
\begin{aligned}
\| \delta U_- - \dot{U}_- \|_{{\mcc}^{1,\alpha}(\bar{\Omega})}\leq& C \|F(\delta U_-)\|_{{\mcc}^{1,\alpha}(\bar{\Omega})}\\
 \leq& C\Big( \|\partial_\xi \delta U_- \|_{{\mcc}^{1,\alpha}(\bar{\Omega})}\cdot \|\delta U_- \|_{{\mcc}^{1,\alpha}(\bar{\Omega})} + \|\delta U_- \|_{{\mcc}^{1,\alpha}(\bar{\Omega})}^2\Big) \\
 \leq& C_L \sigma^2.
\end{aligned}
\end{equation}

\end{proof}

\subsection{The shock front and subsonic flow}
 Assume that for the given quantities $\Theta$ and $P_{\mr{e}}$, there exists a shock front $\Gamma_{\mr{s}}$ whose location is close to the initial approximating location $\dot{\Gamma}_{\mr{s}}$:
\begin{align}
  \Gamma_{\mr{s}}\defs \{(\xi,\eta)\in \mathbb{R}^2: \xi = \psi(\eta): = \dot{\xi}_* + \delta \psi(\eta),\, 0<\eta<1\}.
\end{align}
Then the subsonic region is
\begin{align}
   {\Omega}_+ = \{ (\xi, \eta)\in \mathbb{R}^2 : \psi(\eta) < \xi < L,\, 0 < \eta < 1\},
\end{align}
and the subsonic flow $U_+$ is supposed to be closed to $\bar{U}_+$.

Thus, $(U_+;\psi)$ satisfies the following free boundary value problem
\begin{align}
&B_0 \partial_\eta U_+ + B_1 (U_+) \partial_\xi U_+ + b(U_+)  = 0,&\quad &\text{in}\quad \Omega_+\label{w1}\\
&\text{The R-H conditions} \quad\eqref{eq82}-\eqref{eq3999},&\quad &\text{on}\quad \Gamma_{\mr{s}}\\
&\theta_+ = 0 , \quad \partial_\eta (p_+, q_+, s_+) = 0,\quad \partial_\eta^2\theta_+ = 0,&\quad &\text{on} \quad \Gamma_{2}\cap\overline{\Omega_+}\\
&\theta_+ = \sigma \Theta(\xi),&\quad  &\text{on} \quad \Gamma_{4}\cap\overline{\Omega_+}\\
&p_+ =  \sigma P_{\mr{e}}  (r(L,\eta)),&\quad  &\text{on}\quad \Gamma_3\label{w2}
\end{align}
 where, in the R-H conditions \eqref{eq82}-\eqref{eq3999}, $U_-$ are given by the supersonic flow determined in Lemma 5.1. Thus, the next step is to solve this free boundary value problem near $(\bar{U}_+; \dot{\psi})$. It should be pointed out that the free boundary $\psi$ will be determined by the shape of the shock front $\psi'$ and an exact point $\xi_* : = \psi(1)$ on the nozzle. That is, $\psi(\eta)$ will be rewritten as below:
\begin{equation}
  \psi(\eta ) = \xi_* - \int_{\eta}^1 \delta \psi'(\tau)\dif \tau,
\end{equation}
where $\xi_* := \dot{\xi}_* + \delta \xi_*$, $\delta \xi_*$ will be determined by the solvability condition for the existence of the solution $U_+$ and $\psi'$ will be determined by the R-H conditions.

First, the following transformation will be employed
\begin{align*}
\mathcal{T} : \begin{cases}
\tilde{\xi} = L + \displaystyle\frac{L - \dot{\xi}_*}{L - {\psi}(\eta)}(\xi - L),\\
\tilde{\eta} = \eta,
\end{cases}
\end{align*}
with the inverse
\begin{align*}
 \mathcal{T}^{-1} : \begin{cases}
\xi = L + \displaystyle\frac{L -  {\psi}(\tilde{\eta})}{L - \dot{\xi}_*}(\tilde{\xi} - L),\\
\eta = \tilde{\eta}.
\end{cases}
\end{align*}
Under this transformation, the domain $\Omega_+$ becomes
 \begin{equation}
   \dot{\Omega}_+ = \{ (\xi, \eta)\in \mathbb{R}^2 : \dot{\xi}_* < \xi < L, 0 < \eta < 1\},
 \end{equation}
which is exactly the domain of initial approximating subsonic domain.

Let $\tilde{U}(\tilde{\xi}, \tilde{\eta})\defs U_+\circ \mathcal{T}^{-1}(\tilde{\xi}, \tilde{\eta})$. Direct calculations yield that $\tilde{U}$ satisfies the following equations in $\dot{\Omega}_+$
\begin{align}
  \frac{(\tilde{\xi} - L)\cdot \psi'(\tilde{\eta})}{L- \psi (\tilde{\eta})}B_0\partial_{\tilde{\xi}}\tilde{U} + B_0 \partial_{\tilde{\eta}}\tilde{U}+ \frac{L - \dot{\xi}_*}{L - {\psi}(\tilde{\eta})}B_1(\tilde{U})\partial_{\tilde{\xi}}\tilde{U} + b(\tilde{U}) = 0,\label{w}
\end{align}
with the boundary conditions
\begin{align}
&\tilde{\theta} = 0 , \, \mathscr{P}_1(\tilde{p}, \tilde{q}, \tilde{s}) = 0,\, \mathscr{P}_2\tilde{\theta} = 0,&\quad &\text{on} &\quad &\Gamma_{2}\cap\overline{\dot{\Omega}_+}\\
  &\tilde{\theta} = \sigma \Theta(\xi)\circ \mathcal{T}^{-1}(\tilde{\xi}, 1), &\quad &\text{on} &\quad &\Gamma_{4}\cap\overline{\dot{\Omega}_+}\\
&\tilde{p} =  \sigma P_{\mr{e}}  (r(L,\tilde{\eta})),&\quad &\text{on}&\quad &\Gamma_3
\end{align}
where the operators
\begin{align*}
  \mathscr{P}_1 \defs& (\zeta_1  \partial_{\tilde{\xi}}+ \partial_{\tilde{\eta}}),\\
   \mathscr{P}_2\defs&(\zeta_1^2 \partial_{\tilde{\xi}}^2 +2\zeta_1\partial_{\xi\eta}+\partial_{\tilde{\eta}}^2 + \zeta_2\partial_{\tilde{\xi}}),
\end{align*}
with
\begin{align*}
   \zeta_1(\eta)\defs \displaystyle\frac{(\tilde{\xi} - L)\cdot \psi'(\tilde{\eta})}{L- \psi (\tilde{\eta})},\quad \zeta_2(\eta)\defs (\tilde{\xi} - L)\displaystyle\frac{\psi'' (\tilde{\eta})(L- \psi (\tilde{\eta})) + (\psi'(\tilde{\eta}))^2}{(L- \psi (\tilde{\eta}))^2}.
\end{align*}
By axisymmetry, it can be anticipated that $\zeta_1(0) =0$.

Moreover, the R-H conditions \eqref{eq82}-\eqref{eq3999} become
\begin{align}
  &G_i(\tilde{U}, U_-(\psi',\xi_*))=0, \quad i=1,2,3,\quad &\text{on}\quad \dot{\Gamma}_{\mr{s}},\\
  &G_4(\tilde{U}, U_-(\psi',\xi_*);\psi)=0, \quad &\text{on}\quad \dot{\Gamma}_{\mr{s}},\label{ww}
\end{align}
where $U_-(\psi',\xi_*)\defs U_-(\psi(\tilde{\eta}),\tilde{\eta})$.

In particular, the nonlinear and nonlocal term $r(\xi,\eta)$ becomes
\begin{align}
\tilde{r}(\tilde{\xi}, \tilde{\eta}) = \left(2 \int_{0}^{\tilde{\eta}}\displaystyle\frac{2 t }{\tilde{\rho} \tilde{q} \cos\tilde{\theta}\Big(\vartheta(\tilde{\xi},t), t\Big)} \dif t\right)^{\frac12},
\end{align}
with $\vartheta(\tilde{\xi}, t)\defs \displaystyle\frac{\Big(L - \xi_* + \int_{t}^{1}\delta \psi'(\tau)\dif \tau \Big)\tilde{\xi }+ \Big(\delta \xi_* - \int_{t}^1 \delta \psi'(\tau)\dif \tau\Big)L }{L - \dot{\xi}_*}$.

Therefore, the free boundary problem \eqref{w1}-\eqref{w2} becomes the fixed boundary problem \eqref{w}-\eqref{ww}. Then we will design an iteration scheme to prove the existence of the solutions.

To simplify the notations, we drop `` $ \tilde{} $ '' in the sequel arguments.

\subsection{The linearized problem for the iteration}

This subsection is devoted to describe the linearized problem for the nonlinear iteration scheme, which will be used to prove the existence of solution to the problem \eqref{w}-\eqref{ww} in the next section.

Given approximating states $U=\bar{U}_+  + \delta U$ of the subsonic flow behind the shock front, as well as approximating shape of the shock front $\psi' = \delta\psi'$, then we update them by a new state ${U}^{*} = \bar{U}_+ + \delta {U}^{*}$ of the subsonic flow and the shape of the shock front ${\psi^*}'= \delta{{\psi}^{*}}'$, which are the solution to the problem described below.

Then try to determine $ ( \delta{U}^*(\xi,\eta),\ \delta{\xi}_*;\ \delta{\psi^*}'(\eta)) $ in $ \dot{\Omega}_+$ such that:
\begin{enumerate}
\item $\delta {U}^{*}:=(\delta \theta^*, \delta p^*, \delta q^*, \delta s^*)$ satisfies the following linearized equations in $\dot{\Omega}_+$
\begin{align}
&\partial_\eta \delta p^* + 2\bar{q}_+ \partial_\xi \delta \theta^* = f_1(\delta U, \delta \psi', \delta \xi_{*}),\label{eq978}\\
& (\partial_\eta \delta \theta^* + \displaystyle\frac{\delta\theta^*}{\eta})-  2 \displaystyle\frac{1- \bar{M}_{+}^2} {\bar{\rho}_{+}^2 \bar{q}_{+}^3}\partial_\xi \delta p^{*} = f_2(\delta U, \delta \psi', \delta \xi_{*}),\label{eq0234}\\
&\partial_\xi \Big(\bar{q }_+ \delta q^*
+ \displaystyle\frac{1}{\bar{\rho}_+} \delta p^* +\bar{ T}_+ \delta s^*\Big) = \partial_\xi f_3(\delta U),\label{eq979}\\
&\partial_\xi \delta s^* = 0,\label{eq980}
\end{align}
where
\begin{align*}
f_1(\delta U, \delta \psi', \delta \xi_{*}) \defs & 2\bar{q}_+ \partial_\xi \delta \theta - \displaystyle\frac{2\eta}{r}\Big(-\displaystyle\frac{\sin\theta}{\rho q}\partial_\xi p +  q\cos\theta \partial_\xi \theta \Big)
 - \frac{({\xi} - L)\cdot \psi'({\eta})}{L- \psi ({\eta})} \partial_\xi p\\
 &+\displaystyle\frac{2\eta}{r}\displaystyle\frac{\delta \xi_* - \int_{\eta}^1 \delta \psi'(\tau)\dif \tau}{L -\psi(\eta)}\Big(\displaystyle\frac{\sin\theta}{\rho q}\partial_\xi p - q\cos\theta \partial_\xi \theta\Big),\\
 f_2(\delta U, \delta \psi', \delta \xi_{*}) \defs &\Big (2 \displaystyle\frac{\bar{M}_{+}^2 -1} {\bar{\rho}_{+}^2 \bar{q}_{+}^3}\partial_\xi \delta p + \displaystyle\frac{\delta\theta}{\eta}\Big)- \Big(\displaystyle\frac{2\eta}{r}\displaystyle\frac{\cos\theta}{\rho q}\displaystyle\frac{M^2 -1}{\rho q^2}\partial_\xi p + \displaystyle\frac{2\eta}{r^2} \displaystyle\frac{\sin\theta}{\rho q}\Big)\\
&-\frac{({\xi} - L)\cdot \psi'({\eta})}{L- \psi ({\eta})} \partial_\xi \theta + \displaystyle\frac{2\eta}{r}\displaystyle\frac{\delta \xi_* - \int_{\eta}^1 \delta \psi'(\tau)\dif \tau}{L -\psi(\eta)}\\
&\cdot\Big( \displaystyle\frac{\sin\theta}{\rho q}\partial_\xi \theta- \displaystyle\frac{\cos\theta}{\rho q}\displaystyle\frac{M^2 -1}{\rho q^2}\partial_\xi p \Big),\\
  f_3 (\delta U) \defs & \Big(\bar{q}_+ \delta q+ \displaystyle\frac{1}{\bar{\rho}_+}\delta p + \bar{T}_+ \delta s \Big) - B(U).
\end{align*}

	\item On the nozzle walls $\Gamma_2$ and $\Gamma_4$,
\begin{align}
  &\delta \theta^* = 0,\, \partial_{{\eta}}( \delta p^*, \delta q^*, \delta s^*)= 0,\, \partial_\eta^2\delta \theta^* = 0, &\quad &\text{on}\quad\Gamma_{2}\cap\overline{\dot\Omega_+},\label{eq240}\\
  & \delta \theta^{*} = \sigma \Theta^* (\xi, \delta \xi_*),&\quad &\text{on } \quad\Gamma_{4}\cap\overline{\dot\Omega_+},\label{eq868}
\end{align}
where $\Theta^* (\xi, \delta \xi_*):= \Theta(\xi)\circ \mathcal{T}^{-1}({\xi}, 1) = \Theta \Big(\displaystyle\frac{L - {\xi}_{*}}{L - \dot{\xi}_*}{\xi} + \displaystyle\frac{\delta \xi_{*}}{L - \dot{\xi}_*}L\Big)$;

	\item  On the exit of the nozzle $\Gamma_3$,
\begin{equation}\label{eq867}
  \delta p^* = \sigma P_{\mr{e}}^*( r(L,\eta;\delta U)): = \sigma P_{\mr{e}}\Big((2 \int_{0}^{{\eta}}\displaystyle\frac{2 t }{\rho u (L, t)} \dif t)^{\frac12}\Big),\quad \text{on}\quad\Gamma_3;
\end{equation}

\item On the fixed shock front $\dot{\Gamma}_{\mr{s}}$, the linearized R-H conditions are as below:
\begin{align}
&\alpha_{j+}\cdot \delta U^* = G_j^*(\delta U,\delta U_-, \delta \psi', \delta \xi_*),\quad {j = 1,2,3},\label{eq251}\\
&\alpha_{4+} \cdot \delta U^*-\frac12 [\bar{p}]\delta {\psi^*}' = G_4^* (\delta U,\delta U_-, \delta \psi', \delta \xi_*),\label{eq11000}
  \end{align}
  where
  \begin{align}
    &G_j^*(\delta U,\delta U_-, \delta \psi', \delta \xi_*): = \alpha_{j+}\cdot \delta U - G_j(U, U_-(\psi', \xi_*)),\\
    &G_4^* (\delta U,\delta U_-, \delta \psi', \delta \xi_*): = \alpha_{4+} \cdot \delta U - \frac12[\bar{p}]\delta \psi' - G_4(U, U_-(\psi', \xi_*);\psi')\label{xxl}.
  \end{align}
  \end{enumerate}

  \begin{rem}
 The boundary conditions \eqref{eq251} can be rewritten as
 \begin{equation}
  A_{\mr{s}} (\delta {p}^*,\delta {q}^*, \delta {s}^*)^T =  (G_1^*, G_2^*, G_3^*)^T : = \mathbf{G},
\end{equation}
 where $A_{\mr{s}}$ is defined by \eqref{A_s}.
 By Lemma \ref{lem2}, we have $\det A_{\mr{s}} \neq 0$. Thus, one has
 \begin{align}
  \delta {p}^*: =& g_1^*,\label{p^*}\\
  \delta {q}^*: = & g_2^*,\label{q^*}\\
   \delta {s}^* : =& g_3^*\label{s^*},
\end{align}
 where $(g_1^*, \, g_2^*, \, g_3^*) = A_{\mr{s}}^{-1} \mathbf{G}$.

Moreover, by \eqref{eq11000}, one has
  \begin{equation}\label{eq870}
    \delta {\psi^*}' = 2\left(\displaystyle\frac{  \alpha_4^+ \cdot \delta U^*- G_4^*(\delta U,\delta U_-, \delta \psi', \delta \xi_*)}{[\bar{p}]}\right)\defs g_4^*.
  \end{equation}
 \end{rem}

Obviously, one needs to construct a suitable function space for $(\delta U, \delta \psi')$ such that $\delta\xi_*$ can be determined, and the iteration mapping
\begin{align*}
  \mathbf{\Pi} : (\delta U; \delta \psi') \mapsto (\delta {U}^*; \delta {\psi^*}';\delta \xi_*)
\end{align*}
 is well defined and contractive.

For simplicity of notations, define the solution $(\delta {U}^*; \delta {\psi^*}';\delta \xi_*)$ to the linearized problem \eqref{eq978}-\eqref{eq11000} near $(\dot{U}_+; \dot{\psi}';0)$ as an operator:
\begin{align}\label{iteration}
  (\delta {U}^*; \delta {\psi^*}';\delta \xi_*) = \mathscr{T}_e(\mathscr{F}; \mathscr{G}; \sigma \Theta^*; \sigma P_{\mr{e}}^*),
\end{align}
where $\mathscr{F}\defs (f_1, f_2, f_3)$, $\mathscr{G}\defs(G_1^*,G_2^*,G_3^*, G_4^* ) $.
In particular,
\begin{align}\label{initial}
   (\dot{U}_+; \dot{\psi}';0) = \mathscr{T}_e(\dot{\mathscr{F}}; \dot{\mathscr{G}}; \sigma \dot{\Theta}; \sigma \dot{P_{\mr{e}}}),
\end{align}
where $\dot{\mathscr{F}}\defs (0,0, B(\bar{U}_+))$, $\dot{\mathscr{G}}\defs (\dot{g}_1, \dot{g}_2, \dot{g}_3, \dot{g}_4)$.

When $\delta\xi_*$ is omitted, it will be denoted by
\begin{align}\label{iteration1}
  (\delta {U}^*; \delta {\psi^*}') = \mathscr{T}(\mathscr{F}; \mathscr{G}; \sigma \Theta^*; \sigma P_{\mr{e}}^*),
\end{align}
and
\begin{align}\label{initial1}
   (\dot{U}_+; \dot{\psi}') = \mathscr{T}(\dot{\mathscr{F}}; \dot{\mathscr{G}}; \sigma \dot{\Theta}; \sigma \dot{P_{\mr{e}}})
\end{align}
respectively.

\vskip 0.5cm
Applying Theorem \ref{sub} and taking
\begin{align}\label{eq05}
&\mathcal{A }: = 2\bar{q}_+,\quad \mathcal{B}: = 2\frac{1 - \bar{M}_+^2}{ \bar{\rho}_+^2 \bar{q}_+^3}, \quad  H_1 := \delta p^* ,\quad H_2 := \delta{\theta}^*,\quad \pounds_1: = f_1,
\notag
\\&\pounds_2 := f_2,\quad \hbar_1 := {g}_1^*,\quad \hbar_3 :=\sigma P_{\mr{e}}^*( r(L,\eta;\delta U)) ,\quad \hbar_4 := \sigma \Theta^*(\xi, \delta \xi_*),
\end{align}
one obtains that the boundary value problem $\eqref{eq978}$-$\eqref{eq0234}$ with the boundary conditions $\eqref{eq240}$-$\eqref{eq867}$ and $\eqref{p^*}$ can be solved if and only if
\begin{align}\label{eq248}
    &\int_{\dot{\Omega}_+} \eta f_2 (\delta U, \delta \psi', \delta \xi_{*})\dif \xi \dif \eta
    \notag\\
=&  2\frac{1 - \bar{M}_+^2}{ \bar{\rho}_+^2 \bar{q}_+^3} \int_0^1 \eta\cdot\Big(g_1^*(\delta U, \delta U_-,\delta \psi', \delta \xi_{*}) - \sigma P_{\mr{e}}^*( r(L,\eta;\delta U)) \Big) \dif\eta\notag\\
&  +\int_{\dot{\xi}_*}^{L} \sigma \Theta^* (\xi, \delta \xi_*) \dif\xi.
\end{align}
Then the following lemma holds:
\begin{lem}\label{lem52}
Suppose that, for given $(\delta U; \delta\psi')$ satisfies $\delta U\in \mathbf{H}_{1,\alpha}^{(-\alpha;\{Q_3,Q_4\})}(\dot{\Omega}_+) $, $\delta\psi'\in\mathbf{H}_{1,\alpha}^{(-\alpha;Q_4)}(\dot{\Gamma}_s) $, $f_1(\xi,0) = 0$, and there exists a $\delta\xi_*$ such that $\eqref{eq248}$ holds. Then there exists a solution $(\delta U^*; \delta{\psi^*}')$ to the linearized problem \eqref{eq978}-\eqref{eq11000}, and satisfying the following estimates:
\begin{align}
  &\| (\delta\theta^*,\delta p^*) \|_{1,\alpha;\dot{\Omega}_+}^{(-\alpha;\{Q_3,Q_4\})}
  \notag\\
  \leq &C\Big(\sum_{i=1}^2 \| f_i \|_{0,\alpha;\dot{\Omega}_+}^{(1-\alpha;\{Q_3,Q_4\})}+ \| g_1^*\|_{1,\alpha;\dot{\Gamma}_{\mr{s}}}^{(-\alpha;Q_4)}\Big)\notag\\
  & + C\left(\| \sigma P_{\mr{e}}^* \|_{1,\alpha;{\Gamma}_3}^{(-\alpha;Q_3)}
  + \| \sigma \Theta^* \|_{1,\alpha;{\Gamma}_4}^{(-\alpha;\{Q_3,Q_4\})}\right),\label{eq03}\\
   &\| (\delta q^*,\delta s^*) \|_{1,\alpha;\dot{\Gamma}_{\mr{s}}}^{(-\alpha;Q_4)}
  + \| (\delta q^*,\delta s^*) \|_{0,\alpha;\dot{\Omega}_+}^{(1-\alpha;\{Q_3,Q_4\})}
  \notag \\
  \leq & C \Big(\sum_{i=1}^{3}\| g_i^* \|_{1,\alpha;\dot{\Gamma}_{\mr{s}}}^{(-\alpha;Q_4)}+ \|f_3 - f_3(\dot{\xi}_*,\eta)\|_{0,\alpha;\dot{\Omega}_+}^{(1-\alpha;\{Q_3,Q_4\})} + \|\delta p^*\|_{0,\alpha;\dot{\Omega}_+}^{(1-\alpha;\{Q_3,Q_4\})}\Big), \label{eq012}\\
& \|\delta {\psi^*}' \|_{1,\alpha;\dot{\Gamma}_{\mr{s}}}^{(-\alpha;Q_4)}
 \leq C\Big( \|\delta U^* \|_{1,\alpha;\dot{\Gamma}_{\mr{s}}}^{(-\alpha;Q_4)} +  \| G_4^* \|_{1,\alpha;\dot{\Gamma}_{\mr{s}}}^{(-\alpha;Q_4)}   \Big) ,\label{eq011}
\end{align}
where the constant $C$ depends on $\bar{U}_{\pm}$, $L$, $\dot{\xi}_*$ and $\alpha$.
 \end{lem}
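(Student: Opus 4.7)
\textbf{Proof proposal for Lemma \ref{lem52}.} The plan is to decouple the composite elliptic--hyperbolic system into an elliptic sub-problem for $(\delta\theta^*,\delta p^*)$ and two transport-type equations for $\delta s^*$ and $\delta q^*$, and then to read off $\delta{\psi^*}'$ algebraically from \eqref{eq870}. First I would apply Theorem \ref{sub} to the boundary value problem \eqref{eq978}--\eqref{eq0234} equipped with \eqref{p^*} on $\dot\Gamma_{\mr{s}}$, the symmetry condition $\delta\theta^*=0$ on $\Gamma_2$, the pressure condition \eqref{eq867} on $\Gamma_3$, and the wall condition \eqref{eq868} on $\Gamma_4$. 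With the identifications \eqref{eq05}, one has $\mathcal{A}\mathcal{B}=4\bar{q}_+(1-\bar{M}_+^2)/(\bar{\rho}_+^2\bar{q}_+^3)>0$ since $\bar{M}_+<1$, and the assumption $f_1(\xi,0)=0$ together with $\delta\theta^*\big|_{\Gamma_2}=0$ matches the hypothesis of Theorem \ref{sub}. The solvability condition \eqref{eq082} becomes exactly \eqref{eq248}, which is assumed to hold. Thus Theorem \ref{sub} yields existence of $(\delta\theta^*,\delta p^*)$ together with the estimate \eqref{eq03}.

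Next I would solve \eqref{eq980} for $\delta s^*$. Since $\partial_\xi\delta s^*=0$, the solution is constant along each characteristic $\eta=\mathrm{const}$, so $\delta s^*(\xi,\eta)=\delta s^*\big|_{\dot\Gamma_{\mr{s}}}=g_3^*(\eta)$ by \eqref{s^*}. For $\delta q^*$, I would integrate \eqref{eq979} along $\xi$ from $\dot\xi_*$ to $\xi$ to obtain
\begin{equation*}
\bar{q}_+\delta q^*(\xi,\eta)+\tfrac{1}{\bar\rho_+}\delta p^*(\xi,\eta)+\bar T_+\delta s^*(\eta)
=\bar{q}_+ g_2^*(\eta)+\tfrac{1}{\bar\rho_+}g_1^*(\eta)+\bar T_+ g_3^*(\eta)+f_3(\xi,\eta)-f_3(\dot\xi_*,\eta),
\end{equation*}
which solves $\delta q^*$ pointwise and, combined with the weighted Hölder estimate for $\delta p^*$ obtained above and the boundary data $g_i^*$, yields \eqref{eq012}. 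Finally, \eqref{eq870} defines $\delta{\psi^*}'$ directly in terms of $\delta U^*\big|_{\dot\Gamma_{\mr{s}}}$ and $G_4^*$, which produces \eqref{eq011} by the trace estimate on $\dot\Gamma_{\mr{s}}$.

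I expect the main obstacle to be the bookkeeping of the weighted Hölder norms near the corners $Q_3,Q_4$ and the symmetry axis $\Gamma_2$, in particular verifying that the right-hand side of the elliptic sub-problem lies in $\mathbf{H}_{0,\alpha}^{(1-\alpha;\{Q_3,Q_4\})}$ with $f_1$ vanishing on $\Gamma_2$, so that the hypothesis of Theorem \ref{sub} is legitimately applicable. This also requires that the boundary data $g_1^*$, $\sigma P_{\mr{e}}^*$, and $\sigma\Theta^*$ each belong to the appropriate weighted spaces on $\dot\Gamma_{\mr{s}}$, $\Gamma_3$, $\Gamma_4$ respectively, which is built into the definition of the norms appearing on the right-hand side of \eqref{eq03}. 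The transport-type equations for $(\delta q^*,\delta s^*)$ present no real analytic difficulty; their regularity on $\dot\Gamma_{\mr{s}}$ is controlled by the $g_i^*$, and their regularity in the interior of $\dot\Omega_+$ is inherited from that of $\delta p^*$ and $f_3-f_3(\dot\xi_*,\cdot)$, with the weight $d^{1-\alpha}$ accounting for the lower $\mathcal{C}^{0,\alpha}$ interior regularity compared to the $\mathcal{C}^{1,\alpha}$ regularity retained on the boundary $\dot\Gamma_{\mr{s}}$. Putting these three pieces together completes the proof.
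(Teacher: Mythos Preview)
Your proposal is correct and follows essentially the same approach as the paper: apply Theorem \ref{sub} with the identifications \eqref{eq05} to obtain $(\delta\theta^*,\delta p^*)$ and the estimate \eqref{eq03}, integrate the transport equations \eqref{eq979}--\eqref{eq980} from $\dot\xi_*$ using the shock data \eqref{p^*}--\eqref{s^*} to get $(\delta q^*,\delta s^*)$ and \eqref{eq012}, and then read off $\delta{\psi^*}'$ from \eqref{eq870} to obtain \eqref{eq011}. The paper's proof is slightly terser but structurally identical, and your extra remarks on $\mathcal{A}\mathcal{B}>0$ and the role of $f_1(\xi,0)=0$ simply make explicit what the paper leaves implicit.
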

\begin{proof}
 By employing Theorem \ref{sub}, there exists a unique solution $(\delta \theta^*, \delta p^*)$ to the boundary value problem $\eqref{eq978}$-$\eqref{eq0234}$ with the boundary conditions $\eqref{eq240}$-$\eqref{eq867}$ and $\eqref{p^*}$
  and satisfying the estimate \eqref{eq03}.

  Moreover, by the equations $\eqref{eq979}$-$\eqref{eq980}$ with the initial data \eqref{p^*}-\eqref{s^*}, direct calculation implies that
\begin{align}
 & \delta s^* = g_3^*,\\
 & \bar{q }_+ \delta q^*
+ \displaystyle\frac{1}{\bar{\rho}_+} \delta p^* +\bar{ T}_+ \delta s^* =\Big( \bar{q }_+ \delta q^*
+ \displaystyle\frac{1}{\bar{\rho}_+} \delta p^* +\bar{ T}_+ \delta s^*\Big)(\dot{\xi}_*,\eta) \notag\\
&\qquad \qquad \qquad \qquad \qquad \quad + \Big(f_3 - f_3(\dot{\xi}_*,\eta)\Big),
\end{align}
which yields that there exists a unique solution $(\delta q^*, \delta s^*)$ and it satisfies the estimate \eqref{eq012}.

Finally, by $\eqref{eq870}$, one can obtain \eqref{eq011} immediately.
\end{proof}

\section{Well-posedness and contractiveness of the iteration scheme}
In order to carry out the iteration scheme, one needs to construct a suitable function space for $(\delta U, \delta \psi')$ such that $\delta\xi_*$ can be determined, and the iteration mapping $\mathbf{\Pi}$ is well defined and contractive.

Let $\varepsilon >0$. Define
\begin{align*}
  \textsl{N} (\varepsilon): = &\Big\{( \delta U, \delta\psi'): \| \delta U\|_{(\dot{\Omega}_+;\dot{\Gamma}_{\mr{s}})} +  \| \delta\psi'\|_{1,\alpha;\dot{\Gamma}_{\mr{s}}}^{(-\alpha;Q_4)} \leq \varepsilon,\quad \delta\psi'(0)= 0,\\
 & \quad \delta \theta(\xi,0) = 0,\quad \partial_\eta ( \delta p, \delta q, \delta s) (\xi,0) = 0, \quad \partial_\eta^2\delta \theta(\xi,0) = 0\Big\}.
\end{align*}

First, one needs to show that for given $(\delta U, \delta \psi')$, there exists a $\delta \xi_*$ such that the solvability condition \eqref{eq248} holds. We have the following lemma.

\begin{lem}\label{lem61}
   There exists a positive constant $\sigma_1$ with $0<\sigma_1\ll1$, such that for any $0<\sigma\leq \sigma_1$, if $(\delta U - \dot{U}_+;  \delta \psi' - \dot{\psi}')\in \textsl{N}(\frac12 \sigma^{\frac32})$, then there exists a solution $\delta \xi_{*}$ to the equation $\eqref{eq248}$ satisfying the following estimate:
\begin{equation}\label{d}
  |\delta \xi_{*}|\leq C_{\mr{s}}\sigma,
\end{equation}
where the constant $C_{\mr{s}}$ depends on $\dot{\xi}_*$, $\displaystyle\frac{1}{|\Theta(\dot{\xi}_*)|}$, $\bar{U}_{\pm}$, $L$ and $\alpha$.
 \end{lem}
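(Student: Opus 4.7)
The plan is to treat \eqref{eq248} as a scalar equation for the unknown $\delta\xi_*$ with $(\delta U, \delta\psi')$ frozen in the ball $(\dot U_+, \dot\psi') + \textsl{N}(\tfrac12\sigma^{3/2})$, and solve it by a Banach contraction argument. Set $F(\delta\xi_*)\defs \text{LHS}-\text{RHS}$ of \eqref{eq248}. The target is the representation
\[
F(\delta\xi_*) \;=\; \dot k\,\sigma\,\Theta(\dot\xi_*)\,\delta\xi_* \;+\; \mathscr R(\delta\xi_*;\delta U,\delta\psi'),
\]
where $|\mathscr R|,\,|\partial_{\delta\xi_*}\mathscr R|\le C\sigma^2$ uniformly in the above ball and for $|\delta\xi_*|\le C_{\mr s}\sigma$. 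Because $\Theta(\dot\xi_*)>0$ (from \eqref{eq:assumption_001}) and $\dot k>0$, the equation $F=0$ is equivalent to the fixed point problem $\delta\xi_*=-\mathscr R/(\dot k\sigma\Theta(\dot\xi_*))$, which self-maps and contracts $[-C_{\mr s}\sigma,C_{\mr s}\sigma]$ for $\sigma$ small; the bound \eqref{d} with $C_{\mr s}\sim 1/|\Theta(\dot\xi_*)|$ is then an immediate byproduct.

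To extract the linear coefficient I would Taylor-expand every term of \eqref{eq248} in $\delta\xi_*$ around $0$ and keep only $O(\sigma)$ slopes. Three contributions appear: from the left-hand side, $\partial_{\delta\xi_*}\!\int_{\dot\Omega_+}\!\eta f_2\,d\xi d\eta\big|_0=\sigma(L-\dot\xi_*)^{-1}\!\int_{\dot\xi_*}^L\Theta\,d\xi+O(\sigma^2)$, obtained by isolating the explicit-$\delta\xi_*$ factor in the last summand of $f_2$, then using the telescoping identity $\int_{\dot\xi_*}^L\partial_\xi\dot p_+\,d\xi=\sigma P_{\mr e}-\dot g_1$ on each slice together with the linearized solvability \eqref{eq150}; from the R--H side, $\partial_{\delta\xi_*}\!\big[2\tfrac{1-\bar M_+^2}{\bar\rho_+^2\bar q_+^3}\int_0^1\!\eta g_1^*\,d\eta\big]_0=(1-\dot k)\sigma\Theta(\dot\xi_*)+O(\sigma^2)$, obtained from $\partial_{\delta\xi_*}[\delta U_-(\psi(\eta),\eta)]=\partial_\xi\delta U_-$ inside the leading term $\dot g_1$, together with \eqref{eq822}; and from the wall term, $\partial_{\delta\xi_*}\!\int_{\dot\xi_*}^L\!\sigma\Theta^*\,d\xi\big|_0=\sigma[-\Theta(\dot\xi_*)+(L-\dot\xi_*)^{-1}\!\int_{\dot\xi_*}^L\Theta\,d\xi]$ by the chain rule through $\mathcal T$ and integration by parts. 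Subtracting these three gives precisely $\dot k\,\sigma\,\Theta(\dot\xi_*)=-\sigma\mcr'(\dot\xi_*)$. For the remainder $\mathscr R(0;\delta U,\delta\psi')$, note that $f_2$, $g_j^*$, and $\sigma P_{\mr e}^*(r(L,\eta;\delta U))-\sigma P_{\mr e}(\eta)$ are all quadratic in the total perturbation (so $O(\sigma^2)$), the zeroth-order residual at the initial approximation cancels exactly by Lemma \ref{lem3} ($\mcr(\dot\xi_*)=\mcp_{\mr e}$), and replacing $(\dot U_+,\dot\psi')$ by a generic element of $\textsl{N}(\tfrac12\sigma^{3/2})$ costs at most $O(\sigma)\cdot\sigma^{3/2}=O(\sigma^{5/2})$ because every linearized contribution carries an $O(\sigma)$ background factor. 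The derivative bound on $\mathscr R$ is obtained by the same differentiation-plus-size inspection.

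The main obstacle is the precise cancellation of the two $\sigma(L-\dot\xi_*)^{-1}\!\int_{\dot\xi_*}^L\Theta\,d\xi$ terms coming out of steps (i) and (iii) with opposite signs, leaving only $\dot k\sigma\Theta(\dot\xi_*)$. This is what one \emph{should} expect, since at the linear level the admissible shock location is already known to be governed by $\mcr(\xi_*)=\mcp_{\mr e}$, so perturbing $\dot\xi_*$ must produce exactly the slope $-\mcr'(\dot\xi_*)$; but verifying it directly from the nonlinear equation \eqref{eq248} forces one to invoke \eqref{eq822} in two different guises---once directly, for $\int_0^1\!\eta\partial_\xi\dot p_-\,d\eta$ in (ii), and once indirectly, through the boundary data of $\dot p_+$ on $\dot\Gamma_{\mr s}$ and $\Gamma_3$ combined with \eqref{eq150} in (i)---and to keep careful chain-rule bookkeeping through the transformation $\mathcal T$ inside $\Theta^*$ and inside $f_2$. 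Once the coefficient identity is in place, the fixed-point step is routine.
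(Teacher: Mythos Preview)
Your proposal is correct and follows essentially the same route as the paper: both isolate the leading coefficient of $\delta\xi_*$ in the solvability condition as $\dot k\,\sigma\,\Theta(\dot\xi_*)$ via the same three contributions (the $f_2$ integral, the R--H term through $g_1^*$ and \eqref{eq822}, and the wall term $\Theta^*$), observe that this coefficient is nondegenerate since $\Theta(\dot\xi_*)\ne 0$, and then solve; the paper phrases the last step as the implicit function theorem while you use a contraction, which for a scalar equation is cosmetic. Two minor remarks: your claimed bound $|\partial_{\delta\xi_*}\mathscr R|\le C\sigma^2$ is slightly optimistic---the paper's analysis (see the passage from \eqref{eq050} to \eqref{6.21}) yields only $O(\sigma^{3/2})$ because $(\delta U-\dot U_+,\delta\psi'-\dot\psi')\in\textsl N(\tfrac12\sigma^{3/2})$ contributes at that order, though this is harmless for the contraction; and your derivation of the $f_2$ contribution via the telescoping identity $\int_{\dot\xi_*}^L\partial_\xi\dot p_+\,d\xi=\sigma P_{\mr e}-\dot g_1$ combined with \eqref{eq150} is a clean alternative to the paper's use of \eqref{eq843+} directly.
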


 \begin{proof}
Define
\begin{equation}\label{eq249}
\begin{aligned}
&I(\delta \xi_*,  \delta {U},\delta \psi', \delta U_-)\\
: =&
 -\int_{\dot{\Omega}_+} \eta f_2 (\delta U, \delta \psi', \delta \xi_{*})\dif \xi \dif \eta+\int_{\dot{\xi}_*}^{L} \sigma \Theta^* (\xi, \delta \xi_*) \dif \xi\\
  &+  2\frac{1 - \bar{M}_+^2}{ \bar{\rho}_+^2 \bar{q}_+^3} \int_0^1 \eta\Big(g_1^*(\delta U, \delta U_-,\delta \psi', \delta \xi_{*})- \sigma P_{\mr{e}}^*( r(L,\eta;\delta U)) \Big) \dif \eta.
\end{aligned}
\end{equation}
It is easy to verify that
\begin{equation}
 I(0, 0, 0, \dot{U}_-) = 0.
\end{equation}
We claim that there exists a sufficiently small constant $\sigma_1>0$, such that for any $0<\sigma\leq \sigma_1$, if $(\delta U - \dot{U}_+;  \delta \psi' - \dot{\psi}')\in \textsl{N}(\frac12 \sigma^{\frac32})$, it holds that
\begin{align}
  \frac{\partial I}{\partial(\delta \xi_*)}(0, 0, 0, \dot{U}_-) \neq 0.
\end{align}
Therefore, by applying the implicit function theorem, there exists a $\delta \xi_{*}$ to the equation \eqref{eq249}.
To prove this, one needs to analyze each term of $I$.

First, by applying $(\delta U - \dot{U}_+;  \delta \psi' - \dot{\psi}')\in \textsl{N}(\frac12 \sigma^{\frac32})$ and Lemma \ref{lem5}, it is easy to see that
\begin{align}\label{kk2}
  \|\delta U\|_{(\dot{\Omega}_+;\dot{\Gamma}_{\mr{s}})} +  \| \delta\psi'\|_{1,\alpha;\dot{\Gamma}_{\mr{s}}}^{(-\alpha;Q_4)}\leq C_* \sigma.
\end{align}
By \eqref{kk2}, it follows that
\begin{equation}\label{eq0103}
\begin{aligned}
   P_{\mr{e}}^*( r(L,\eta;\delta U))&={P}_{\mr{e}}(\eta) + \Big(P_{\mr{e}}( (2 \int_{0}^{{\eta}}\displaystyle\frac{2 t }{\rho u (L, t)} \dif t)^{\frac12}) -P_{\mr{e}}( (2 \int_{0}^{{\eta}}\displaystyle\frac{2 t }{\bar{\rho}_+ \bar{q}_+} \dif t)^{\frac12})\Big)\\
  & = {P}_{\mr{e}}(\eta) + O(1)\sigma,
\end{aligned}
\end{equation}
where we use the assumption $\bar{\rho}_+\bar{q}_+ = 2$, and $O(1)$ depends on $C_*$ and ${P}_{\mr{e}}^{'}$. Thus,
\begin{align}
  \int_0^1 \eta\Big(\sigma  P_{\mr{e}}^*( r(L,\eta;\delta U))\Big) \dif \eta = \int_0^1 \eta\Big(\sigma  {P}_{\mr{e}}(\eta)\Big)\dif \eta + O(1)\sigma^2.
\end{align}
Moreover, direct calculations yield that
\begin{align}
&\int_{\dot{\xi}_*}^{L} \sigma \Theta^* (\xi, \delta \xi_*) \dif \xi\notag\\
=& \sigma\int_{\dot{\xi}_*}^{L}  \Theta(\tau) \dif \tau + \sigma\int_{\dot{\xi}_* + \delta \dot{\xi}_*}^{\dot{\xi}_*} \Theta(\tau)\dif \tau + \sigma\displaystyle\frac{\delta\xi_*}{L- \xi_*}\int_{\dot{\xi}_* + \delta\xi_*}^L \Theta(\tau)\dif \tau.
\end{align}
To estimate $g_1^*$, recalling \eqref{eq251}, for $j=1,2,3$, one has
\begin{align}\label{eq0104}
  G_j^* = &\alpha_j^+\cdot \delta U - G_j \Big(U, U_-(\delta \psi' , \xi_*)\Big)
  \notag\\
   =& \Big(\alpha_j^+\cdot \delta U +\alpha_j^-\cdot \delta U_-(\delta \psi',\xi_*) - G_j (U, U_-(\delta \psi' , \xi_*))\Big)
   \notag\\
   &-\alpha_j^-\cdot \Big(\delta U_-(\delta \psi',\xi_*) - \dot{U}_-(\dot{\xi}_*,\eta)\Big)- \alpha_j^- \cdot \dot{U}_-(\dot{\xi}_*,\eta),
\end{align}
where
\begin{equation}
\begin{aligned}
  &\alpha_j^+\cdot \delta U +\alpha_j^-\cdot \delta U_-(\delta \psi',\xi_*) - G_j (U, U_-(\delta \psi' , \xi_*))\\
   = &\frac{1}{2} \int_{0}^{1} D^2 G_j (\bar{U}_+ + t \delta U; \bar{U}_- + t\delta U_-)\dif t \cdot(\delta U; \delta U_-)^2\\
   =& O(1)\sigma^2.
  \end{aligned}
\end{equation}
Moreover, by Lemma \ref{lem1} and Lemma \ref{lem51}, it holds that
  \begin{align}
   &\delta U_-(\delta \psi',\xi_*) - \dot{U}_-(\dot{\xi}_*,\eta)\notag\\
    =& \Big(\delta U_-(\delta \psi',\xi_*)- \dot{U}_-(\xi_* - \int_{\eta}^{1}\delta \psi'(\tau)\dif \tau,\eta)\Big)\notag\\
    &+
     \Big(\dot{U}_-(\xi_* - \int_{\eta}^{1}\delta \psi'(\tau)\dif \tau
     ,\eta) - \dot{U}_-({\xi}_*,\eta)\Big)+\Big(\dot{U}_-({\xi}_*,\eta) - \dot{U}_-(\dot{\xi}_*,\eta)\Big)\notag\\
      =& O(1) \sigma^2 + \Big(\dot{U}_-({\xi}_*,\eta) - \dot{U}_-(\dot{\xi}_*,\eta)\Big).
  \end{align}
Therefore
\begin{equation}
  G_j^* = - \alpha_j^- \cdot \dot{U}_-(\dot{\xi}_*+ \delta \xi_*,\eta) +  O(1) \sigma^2,
\end{equation}
which yields that
\begin{equation}\label{eq0105}
  g_j^* = \dot{g}_j (\dot{\xi}_*+ \delta \xi_*,\eta) + O(1)\sigma^2,
\end{equation}
where $O(1)$ depends on $C_*$, $\bar{U}_\pm$, $L$ and $\Theta$.
Thus,
\begin{align}
  \int_0^1 \eta g_1^*(\delta U, \delta U_-,\delta \psi', \delta \xi_{*})\dif \eta = \int_0^1 \eta  \dot{g}_j (\dot{\xi}_*+ \delta \xi_*,\eta)\dif \eta + O(1)\sigma^2.
\end{align}
It remains to estimate $f_2$. One has
\begin{align}\label{eq050}
 & f_2(\delta U; \delta \psi', \delta \xi_*)
 \notag\\
 =&\Big (2 \displaystyle\frac{\bar{M}_{+}^2 -1} {\bar{\rho}_{+}^2 \bar{q}_{+}^3} - \displaystyle\frac{2\eta}{r}\displaystyle\frac{\cos\theta}{\rho q}\displaystyle\frac{M^2 -1}{\rho q^2}\Big)\partial_\xi \delta p + \Big(\displaystyle\frac{\delta\theta}{\eta} - \displaystyle\frac{2\eta}{r^2} \displaystyle\frac{\sin\theta}{\rho q} \Big)\notag\\
 &+\displaystyle\frac{L -\xi }{L - \psi(\eta)}\delta \psi'(\eta) \partial_\xi \delta\theta  + \displaystyle\frac{2\eta}{r}\displaystyle\frac{\delta \xi_* - \int_{\eta}^1 \delta \psi'(\tau)\dif \tau}{L -\psi(\eta)} \displaystyle\frac{\sin\theta}{\rho q}\partial_\xi \delta\theta
 \notag\\
 & + \displaystyle\frac{2\eta}{r}\displaystyle\frac{\int_{\eta}^1 \delta \psi'(\tau)\dif \tau}{L -\psi(\eta)}\displaystyle\frac{\cos\theta}{\rho q}\displaystyle\frac{M^2-1}{\rho q^2}\partial_\xi p
 \notag\\
 &+ \Big(\displaystyle\frac{2\eta}{r}\displaystyle\frac{\delta \xi_*}{L -\psi(\eta)}\displaystyle\frac{\cos\theta}{\rho q}\displaystyle\frac{1-M^2}{\rho q^2}\partial_\xi p\Big).
\end{align}
Notice that by \eqref{kk2}, it holds that
\begin{align}\label{aq}
  r =& \bar{r} + \left( \Big(2 \int_0^\eta \frac{2t}{\rho q \cos\theta} \dif t\Big)^\frac12 - \Big(2 \int_0^\eta \frac{2t}{\bar{\rho}_+\bar{q}_+}\dif t\Big)^\frac12\right)\notag\\
  =& \eta + O(1) \sigma\cdot \eta,
\end{align}
where $\bar{r}=\eta$ under the assumption $\bar{\rho}_+\bar{q}_+ = 2$, and $O(1)$ depends on $C_*$. Thus, \eqref{kk2} and \eqref{aq} yield that
\begin{align}\label{6.17}
  &\int_{\dot{\Omega}_+} \eta\left(\Big (2 \displaystyle\frac{\bar{M}_{+}^2 -1} {\bar{\rho}_{+}^2 \bar{q}_{+}^3} - \displaystyle\frac{2\eta}{r}\displaystyle\frac{\cos\theta}{\rho q}\displaystyle\frac{M^2 -1}{\rho q^2}\Big)\partial_\xi \delta p + \Big(\displaystyle\frac{\delta\theta}{\eta} - \displaystyle\frac{2\eta}{r^2} \displaystyle\frac{\sin\theta}{\rho q} \Big)\right) \dif \xi \dif \eta\notag\\
    =& O(1) \|\delta U\|_{(\dot{\Omega}_+;\dot{\Gamma}_{\mr{s}})}^2 =O(1) \sigma^2,
\end{align}
where $O(1)$ depends on $\bar{U}_+$, $\dot{\xi}_*$, $C_*$ and $\alpha$. Moreover,
\begin{align}
  &\int_{\dot{\Omega}_+} \eta\Big(\displaystyle\frac{L -\xi }{L - \psi(\eta)}\delta \psi'(\eta) \partial_\xi \delta\theta  + \displaystyle\frac{2\eta}{r}\displaystyle\frac{\delta \xi_* - \int_{\eta}^1 \delta \psi'(\tau)\dif \tau}{L -\psi(\eta)} \displaystyle\frac{\sin\theta}{\rho q}\partial_\xi \delta\theta\Big) \dif \xi \dif \eta
 \notag\\
 & +\int_{\dot{\Omega}_+} \eta\Big( \displaystyle\frac{2\eta}{r}\displaystyle\frac{\int_{\eta}^1 \delta \psi'(\tau)\dif \tau}{L -\psi(\eta)}\displaystyle\frac{\cos\theta}{\rho q}\displaystyle\frac{M^2-1}{\rho q^2}\partial_\xi p\Big) \dif \xi \dif \eta\notag\\
 = & O(1) \|\delta \psi'\|_{L^\infty(\dot{\Gamma}_s)}\| \delta\theta \|_{1,\alpha;\dot{\Omega}_+}^{(-\alpha;\{Q_3,Q_4\})}
  +  O(1) \|\delta \psi'\|_{L^\infty(\dot{\Gamma}_s)}\|\delta U\|_{(\dot{\Omega}_+;\dot{\Gamma}_{\mr{s}})}^2 \notag\\
  &+ O(1) \|\delta U\|_{(\dot{\Omega}_+;\dot{\Gamma}_{\mr{s}})}^2\delta \xi_* + O(1) \|\delta \psi'\|_{L^\infty(\dot{\Gamma}_s)}\|\delta U\|_{(\dot{\Omega}_+;\dot{\Gamma}_{\mr{s}})}\notag\\
 = & O(1)\sigma^2 +  O(1)\sigma^2\cdot\delta \xi_*,
\end{align}
where $O(1)$ depends on $\bar{U}_+$, $\dot{\xi}_*$, $C_*$ and $\alpha$. Finally, it holds that
  \begin{align}
    &\displaystyle\frac{2\eta}{r}\displaystyle\frac{\delta \xi_*}{L -\psi(\eta)}\displaystyle\frac{\cos\theta}{\rho q}\displaystyle\frac{1 - M^2}{\rho q^2}\partial_\xi p
    \notag\\
    =&\Big(\displaystyle\frac{2\eta}{r} -\displaystyle\frac{2\eta}{\bar{r}} \Big)\displaystyle\frac{\delta \xi_*}{L -\psi(\eta)}\displaystyle\frac{\cos\theta}{\rho q}\displaystyle\frac{1 - M^2}{\rho q^2}\partial_\xi p
    \notag\\
    &+ 2 \displaystyle\frac{\delta \xi_*}{L -\psi(\eta)}\Big(\displaystyle\frac{\cos\theta}{\rho q}\displaystyle\frac{1 - M^2}{\rho q^2} - \displaystyle\frac{1}{\bar{\rho}_+ \bar{q}_+}\cdot\displaystyle\frac{1-\bar{M}_+^2}{\bar{\rho}_+ \bar{q}_+^2}\Big)\partial_\xi \delta p
     \notag\\
    & + 2\displaystyle\frac{\delta \xi_*}{L -\psi(\eta)} \displaystyle\frac{1}{\bar{\rho}_+ \bar{q}_+}\cdot\displaystyle\frac{1-\bar{M}_+^2}{\bar{\rho}_+ \bar{q}_+^2}\partial_\xi (\delta p - \dot{p}_+)
    \notag\\
    &- 2\displaystyle\frac{\delta \xi_*\cdot \int_{\eta}^{1}\delta \psi'(\tau)d\tau}{(L -\psi(\eta))(L - \xi_*)} \cdot \displaystyle\frac{1}{\bar{\rho}_+ \bar{q}_+}\cdot\displaystyle\frac{1-\bar{M}_+^2}{\bar{\rho}_+ \bar{q}_+^2}\partial_\xi \dot{p}_+
    \notag\\
    & + 2\displaystyle\frac{\delta \xi_*}{L -\xi_*} \displaystyle\frac{1}{\bar{\rho}_+ \bar{q}_+}\cdot\displaystyle\frac{1-\bar{M}_+^2}{\bar{\rho}_+ \bar{q}_+^2}\partial_\xi \dot{p}_+.
  \end{align}
By employing $\eqref{eq843+}$, it holds that
\begin{equation}
  \begin{aligned}
    \int_{\dot{\Omega}_+} 2\displaystyle\frac{1}{\bar{\rho}_+ \bar{q}_+}\cdot\displaystyle\frac{1-\bar{M}_+^2}{\bar{\rho}_+ \bar{q}_+^2}\partial_\xi (\eta\dot{p}_+)\dif \xi \dif \eta = \int_{\dot{\xi}_*}^{L} \int_{0}^{1} \partial_\eta (\eta \dot{\theta}_+ )\dif \eta \dif\xi= \sigma \int_{\dot{\xi}_*}^{L} \Theta (\xi)\dif \xi.
  \end{aligned}
\end{equation}
Thus, one has
\begin{align}\label{6.21}
  &\int_{\dot{\Omega}_+} \eta\Big(\displaystyle\frac{2\eta}{r}\displaystyle\frac{\delta \xi_*}{L -\psi(\eta)}\displaystyle\frac{\cos\theta}{\rho q}\displaystyle\frac{1-M^2}{\rho q^2}\partial_\xi p\Big)\dif \xi \dif \eta\notag\\
  =& O(1)\sigma \| \delta p \|_{1,\alpha;\dot{\Omega}_+}^{(-\alpha;\{Q_3,Q_4\})}\cdot \delta \xi_* +O(1)\|\delta U\|_{L^\infty(\dot{\Omega}_+)}\| \delta p \|_{1,\alpha;\dot{\Omega}_+}^{(-\alpha;\{Q_3,Q_4\})} \notag\\
  &+ O(1) \| \delta p - \dot{p}_+\|_{1,\alpha;\dot{\Omega}_+}^{(-\alpha;\{Q_3,Q_4\})}\cdot \delta \xi_* \notag\\
  &+ O(1)\|\delta \psi'\|_{L^\infty(\dot{\Gamma}_s)}\| \dot{p}_+ \|_{1,\alpha;\dot{\Omega}_+}^{(-\alpha;\{Q_3,Q_4\})}\cdot \delta \xi_* + \displaystyle\frac{\delta \xi_*}{L -\xi_*} \sigma \int_{\dot{\xi}_*}^{L} \Theta (\xi)\dif \xi\notag\\
= &\displaystyle\frac{\delta \xi_*}{L -\xi_*} \sigma \int_{\dot{\xi}_*}^{L} \Theta (\xi)\dif \xi+  O(1)\sigma^{\frac32}\cdot \delta \xi_*,
\end{align}
where $O(1)$ depends on $\bar{U}_+$, $\dot{\xi}_*$, $C_*$ and $\alpha$.

Therefore, concluding the estimates \eqref{6.17}-\eqref{6.21} for terms of $f_2$ in \eqref{eq050}, one obtains
\begin{align}
   &-\int_{\dot{\Omega}_+} \eta f_2 (\delta U, \delta \psi', \delta \xi_{*})\dif \xi \dif \eta\notag\\
    =& -\displaystyle\frac{\delta \xi_*}{L -\xi_*} \sigma \int_{\dot{\xi}_*}^{L} \Theta (\xi)\dif \xi +  O(1)\sigma^{\frac32}\cdot \delta \xi_* + O(1) \sigma^2.
\end{align}

Hence, it holds that
\begin{align}\label{eq285}
&I(\delta \xi_*,  \delta {U},\delta \psi', \delta U_-)
\notag \\
= & -\displaystyle\frac{\delta \xi_*}{L -\xi_*} \sigma \int_{\dot{\xi}_*}^{L} \Theta (\xi)\dif \xi +  O(1)\sigma^{\frac32}\cdot \delta \xi_* + O(1) \sigma^2
\notag\\
   &+ \sigma\int_{\dot{\xi}_*}^{L}  \Theta(\tau) \dif \tau + \sigma\int_{\dot{\xi}_* + \delta \dot{\xi}_*}^{\dot{\xi}_*} \Theta(\tau)\dif \tau + \sigma\displaystyle\frac{\delta\xi_*}{L- \xi_*}\int_{\dot{\xi}_* + \delta\xi_*}^L \Theta(\tau)\dif \tau
   \notag\\
    &+2\frac{1 - \bar{M}_+^2}{ \bar{\rho}_+^2 \bar{q}_+^3}\int_0^1 \eta \dot{g}_1(\dot{\xi}_* + \delta\xi_*, \eta) \dif \eta + O(1)\sigma^2
    \notag\\
    &- 2\frac{1 - \bar{M}_+^2}{ \bar{\rho}_+^2 \bar{q}_+^3} \sigma\int_0^1 \eta {P}_{\mr{e}}(\eta)\dif \eta + O(1)\sigma^2
    \notag\\
     =& 2\frac{1 - \bar{M}_+^2}{ \bar{\rho}_+^2 \bar{q}_+^3}\int_0^1 \eta \left(\dot{g}_1(\dot{\xi}_*, \eta) - \sigma {P}_{\mr{e}} (\eta) \right)\dif \eta + \sigma\int_{\dot{\xi}_*}^{L}  \Theta(\tau) \dif \tau
     \notag\\
    & +  2\frac{1 - \bar{M}_+^2}{ \bar{\rho}_+^2 \bar{q}_+^3} \int_0^1 \eta \left(\dot{g}_1(\dot{\xi}_* + \delta \xi_*, \eta) - \dot{g}_1(\dot{\xi}_* , \eta) \right)\dif \eta + \sigma\int_{\dot{\xi}_*+\delta\xi_*}^{\dot{\xi}_*}  \Theta(\tau) \dif \tau
    \notag\\
    &+\displaystyle\frac{\delta \xi_*}{L -\xi_*} \sigma \left( -\int_{\dot{\xi}_*}^{L} \Theta (\tau)\dif \tau + \int_{\dot{\xi}_*+\delta\xi_*}^{L}  \Theta(\tau) \dif \tau\right) + O(1)\sigma^{\frac32}\cdot \delta \xi_* + O(1) \sigma^2.
\end{align}
Then, by employing the equations $\eqref{eq150}$, \eqref{eq10002} and \eqref{eq822}, it holds that
  \begin{align}\label{kxx}
  &I(\delta \xi_*,  \delta {U},\delta \psi', \delta U_-)
  \notag\\
   = &(1- \dot{k})\sigma\left( \int_{0}^{\dot{\xi}_* + \delta\xi_*} \Theta(\tau)\dif \tau - \int_{0}^{\dot{\xi}_*}\Theta(\tau)\dif \tau \right)
  + \sigma\int_{\dot{\xi}_*+\delta\xi_*}^{\dot{\xi}_*}  \Theta(\tau) \dif \tau
  \notag\\
    &+\displaystyle\frac{\delta \xi_*}{L -\xi_*} \sigma \int_{\dot{\xi}_*+\delta\xi_*}^{\dot{\xi}_*}  \Theta(\tau) \dif \tau + O(1)\sigma^{\frac32}\cdot \delta \xi_* + O(1) \sigma^2
    \notag\\
     = &\Big( -\sigma \dot{k} \Theta(\dot{\xi}_*) \delta \xi_* + O(1) \sigma \cdot \delta \xi_*^2\Big) + \sigma \cdot \displaystyle\frac{\delta \xi_*}{L - \xi_*} \Big(-\Theta(\dot{\xi}_*)\delta \xi_*+ O(1) \delta\xi_*^2\Big)
     \notag\\
     &+ O(1)\sigma^{\frac32}\cdot \delta \xi_* + O(1) \sigma^2
     \notag\\
      =& \Big( -\sigma \dot{k} \Theta(\dot{\xi}_*)+ O(1)\sigma^{\frac32}\Big)\cdot \delta \xi_* + O(1) \sigma \cdot \delta \xi_*^2 + O(1) \sigma^2,
  \end{align}
  where $O(1)$ depends on $\dot{\xi}_*$, $\bar{U}_\pm$, $L$, $\alpha$ and $P_{\mr{e}}^{'}$.

Obviously, \eqref{kxx} implies that, as long as $ \Theta(\dot{\xi}_*) \neq 0$ and $\sigma$ small enough, one has
\begin{equation}
  \frac{\partial I}{\partial{\delta \xi_*}}(0, 0, 0, \dot{ U}_-) =
  -\sigma \dot{k} \Theta(\dot{\xi}_*)+ O(1)\sigma^{\frac32}\neq 0.
\end{equation}
By applying the implicit function theorem, there exists a solution $\delta \xi_{*}$ satisfying the equation $\eqref{eq248}$, and
\begin{equation}
   |\delta \xi_*| \leq \Big|\frac{O(1) \sigma}{\dot{k}\Theta(\dot{\xi}_*)} \Big| \leq C_{\mr{s}}\sigma.
 \end{equation}

\end{proof}

Define
\begin{align*}
  \mathscr{N}(\dot{U}_+; \dot{\psi}')\defs \Big\{(\delta U; \delta \psi'): (\delta U - \dot{U}_+;  \delta \psi' - \dot{\psi}')\in \textsl{N}\Big(\frac12 \sigma^{\frac32}\Big)\Big\}.
\end{align*}
Lemma \ref{lem52} and Lemma \ref{lem61} imply that the existence of the solution $(\delta U^*, \delta{\psi^*}'; \delta \xi_*)$ to the linearized problem \eqref{eq978}-\eqref{eq11000} as $(\delta U; \delta \psi')\in \mathscr{N}(\dot{U}_+; \dot{\psi}')$.
Furthermore, it can be proved that $(\delta {U}^* ; \delta {\psi^*}'  )\in \mathscr{N}(\dot{U}_+; \dot{\psi}')$ if $\sigma$ sufficiently small, i.e., the iteration mapping $\mathbf{\Pi}$ is well defined, as the following lemma shows:
\begin{lem}\label{lem62}
   There exists a positive constant $\sigma_2$ with $0<\sigma_2 \ll 1$, such that for any $0< \sigma \leq \sigma_2$, if $(\delta U ;  \delta \psi' )\in \mathscr{N}(\dot{U}_+; \dot{\psi}')$, then there exists a solution $(\delta {U}^*; \delta {\psi^*}')$ to the linearized problem \eqref{eq978}-\eqref{eq11000} and satisfies
 $(\delta {U}^* ; \delta {\psi^*}' )\in \mathscr{N}(\dot{U}_+; \dot{\psi}')$.
 \end{lem}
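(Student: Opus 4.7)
The plan is to exploit the affine-linear structure of the operator $\mathscr{T}$. Since $(\delta U;\delta\psi')\in\mathscr{N}(\dot U_+;\dot\psi')$, the triangle inequality together with estimate \eqref{eq089} gives
\begin{equation*}
\|\delta U\|_{(\dot\Omega_+;\dot\Gamma_{\mr s})}+\|\delta\psi'\|_{1,\alpha;\dot\Gamma_{\mr s}}^{(-\alpha;Q_4)}\le \dot C_+\sigma+\tfrac12\sigma^{3/2}\le C_*\sigma.
\end{equation*}
Lemma~\ref{lem61} then produces a correction $\delta\xi_*$ with $|\delta\xi_*|\le C_{\mr s}\sigma$ so that the solvability condition \eqref{eq248} holds, and Lemma~\ref{lem52} yields a well-defined update $(\delta U^*;\delta{\psi^*}')=\mathscr{T}(\mathscr{F};\mathscr{G};\sigma\Theta^*;\sigma P_{\mr e}^*)$. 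Comparing with \eqref{initial1} and using that $\mathscr{T}$ is linear in its inhomogeneous data, I would write
\begin{equation*}
(\delta U^*-\dot U_+;\,\delta{\psi^*}'-\dot\psi')=\mathscr{T}\bigl(\mathscr{F}-\dot{\mathscr{F}};\,\mathscr{G}-\dot{\mathscr{G}};\,\sigma(\Theta^*-\dot\Theta);\,\sigma(P_{\mr e}^*-\dot P_{\mr e})\bigr),
\end{equation*}
and apply the a priori bounds \eqref{eq03}--\eqref{eq011} to reduce everything to estimating the four data differences on the right.

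The heart of the argument is to show that every one of these differences is $O(\sigma^2)$ in the appropriate weighted H\"older norm. For the interior sources, each monomial in $f_1$ and $f_2$ is either a bilinear contraction of $(\delta U,\delta\psi',\delta\xi_*)$ against a background-sized quantity, or a genuine nonlinear remainder obtained by Taylor-expanding $B_1(U)$ and $b(U)$ around $\bar U_+$; using $r=\eta+O(\sigma)\eta$ from \eqref{aq} and the $C_*\sigma$ bound above, each term is bounded by $C\sigma^2$. For $f_3-B(\bar U_+)$ I would Taylor expand $B(U)=B(\bar U_+)+\nabla B(\bar U_+)\cdot\delta U+O(|\delta U|^2)$ and recognise that the linear part equals $\bar q_+\delta q+\bar\rho_+^{-1}\delta p+\bar T_+\delta s$. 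The R-H differences $G_j^*-\dot g_j$ are handled exactly as in \eqref{eq0104}--\eqref{eq0105}: the quadratic remainder of $G_j$ is $O(\sigma^2)$, and $\dot U_-(\xi_*,\eta)-\dot U_-(\dot\xi_*,\eta)=O(\delta\xi_*\,\|\dot U_-\|_{\mcc^{1,\alpha}})=O(\sigma^2)$ by Lemma~\ref{lem1}. The boundary data differences are handled by direct Taylor expansion: $\sigma(\Theta^*-\dot\Theta)=O(\sigma|\delta\xi_*|)=O(\sigma^2)$ and $\sigma(P_{\mr e}^*-\dot P_{\mr e})=O(\sigma\|\delta U\|)=O(\sigma^2)$, as already observed in \eqref{eq0103}. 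Summing, one obtains
\begin{equation*}
\|\delta U^*-\dot U_+\|_{(\dot\Omega_+;\dot\Gamma_{\mr s})}+\|\delta{\psi^*}'-\dot\psi'\|_{1,\alpha;\dot\Gamma_{\mr s}}^{(-\alpha;Q_4)}\le C_\#\sigma^2,
\end{equation*}
which is bounded by $\tfrac12\sigma^{3/2}$ once $\sigma\le\sigma_2:=\min\{\sigma_1,(2C_\#)^{-2}\}$.

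The main subtlety, and the step on which I would spend most care, is verifying the structural side-conditions that membership in $\textsl{N}(\tfrac12\sigma^{3/2})$ requires on the symmetry axis $\Gamma_2$ and at the corner $Q_4$, namely $\delta{\psi^*}'(0)=0$, $\delta\theta^*(\xi,0)=0$, $\partial_\eta(\delta p^*,\delta q^*,\delta s^*)(\xi,0)=0$ and $\partial_\eta^2\delta\theta^*(\xi,0)=0$. The first three follow from Remark~\ref{opp} applied to the elliptic sub-problem together with Remark~\ref{axis}, provided one first checks the trace condition $f_1(\xi,0)=0$ required by Theorem~\ref{sub}; this in turn must be read off from the structure of $f_1$, using the axis compatibility built into $(\delta U,\delta\psi')$ and into $U_-$ (Lemma~\ref{lem1}). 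The second-order vanishing $\partial_\eta^2\delta\theta^*(\xi,0)=0$ is obtained by differentiating \eqref{eq0234} in $\eta$ and inserting $\delta\theta^*(\xi,0)=\partial_\eta\delta p^*(\xi,0)=0$, exactly as in Remark~\ref{axis}. The algebraic identity $\delta{\psi^*}'(0)=0$ follows from \eqref{eq870} because $\alpha_{4+}$ and $\alpha_{4-}$ act only on the $\theta$ component, which vanishes at $\eta=0$ for both $\delta U^*$ and $U_-(\psi(0),0)$. Once these axis identities are in hand, the quantitative estimate of the previous paragraph closes the proof.
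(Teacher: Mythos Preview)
Your approach is essentially the paper's: invoke Lemma~\ref{lem61} for $\delta\xi_*$, use the linearity of $\mathscr{T}$ to subtract off the initial approximation, estimate each data difference as $O(\sigma^2)$, and finally verify the axis compatibility conditions. The quantitative part (Step~2 in the paper) is handled correctly in outline.

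There is one genuine gap in your axis argument. You write that $\partial_\eta^2\delta\theta^*(\xi,0)=0$ follows by differentiating \eqref{eq0234} in $\eta$ and inserting $\delta\theta^*(\xi,0)=\partial_\eta\delta p^*(\xi,0)=0$, ``exactly as in Remark~\ref{axis}.'' But Remark~\ref{axis} treats the \emph{homogeneous} equation \eqref{eq843+}, whereas \eqref{eq0234} carries the source $f_2$ on the right. Differentiating \eqref{eq0234} and evaluating at $\eta=0$ actually gives $\tfrac12\partial_\eta^2\delta\theta^*(\xi,0)=\partial_\eta f_2(\xi,0)$, so you must separately verify $\partial_\eta f_2(\xi,0)=0$. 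This requires going back to the explicit expression for $f_2$ and using the full set of axis hypotheses on $(\delta U,\delta\psi')$ from the definition of $\textsl N(\cdot)$ (in particular $\delta\theta(\xi,0)=\partial_\eta^2\delta\theta(\xi,0)=0$, $\partial_\eta\delta p(\xi,0)=0$, $\delta\psi'(0)=0$) together with $r=\eta+O(\sigma)\eta$.

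A related point you gloss over is why $f_2$ lies in $\mathbf{H}_{0,\alpha}^{(1-\alpha;\{Q_3,Q_4\})}(\dot\Omega_+)$ at all: the combination $\dfrac{\delta\theta}{\eta}-\dfrac{2\eta}{r^2}\dfrac{\sin\theta}{\rho q}$ is individually singular at $\eta=0$, and the paper handles this by showing the difference equals $O(1)\dfrac{(\delta\theta)^2}{\eta}$, which is bounded precisely because $\delta\theta(\xi,0)=0$. Your appeal to \eqref{aq} covers the $r$-dependence but not this cancellation; it should be made explicit before invoking Theorem~\ref{sub} via Lemma~\ref{lem52}.
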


\begin{proof}
The proof is divided into three steps.

\textbf{Step 1}: In this step, we prove the existence of the solution $(\delta {U}^*; \delta {\psi^*}')$.

By Lemma \ref{lem52} and Lemma \ref{lem61}, it suffices to verify that $f_1(\xi,0)=0$ and $f_i\in \mathbf{H}_{0,\alpha}^{(1-\alpha;\{Q_3,Q_4\})}(\dot{\Omega}_+) $, $(i=1,2)$.

Since $(\delta U;  \delta \psi' )\in \mathscr{N}(\dot{U}_+; \dot{\psi}')$, then there exist positive constants $\kappa_1$ and $\kappa_2$ depending only on the background solution $\bar{U}_+$, such that
\begin{align}
  \kappa_1 \eta \leq r\leq \kappa_2 \eta,
\end{align}
which implies that
\begin{align}\label{mn}
 \frac{1}{\kappa_2}\leq  \displaystyle\frac{\eta}{r}\leq \frac{1}{\kappa_1}.
\end{align}
Recalling the definition of $f_1$, since $\delta \theta (\xi,0) = 0$ and $\delta\psi'(0) = 0$, it is easy to check that $f_1(\xi,0) =0$ and $f_1\in \mathbf{H}_{0,\alpha}^{(1-\alpha;\{Q_3,Q_4\})}(\dot{\Omega}_+)$ by employing \eqref{mn}.

Moreover, notice that
\begin{align}
 \frac{\delta\theta}{\eta}- \displaystyle\frac{2\eta}{r^2} \displaystyle\frac{\sin\theta}{\rho q} = O(1)\frac{(\delta\theta)^2}{\eta},
\end{align}
where we use the assumption $\bar{\rho}_+\bar{q}_+ = 2$ and $O(1)$ depends on $\bar{U}_+$ and $\dot{\xi}_*$. Then it is easy to see that $\displaystyle\frac{(\delta\theta)^2}{\eta}(\xi,0+) =0$ due to $\delta \theta(\xi,0+) = 0$. Thus $f_2$ is not singular on $\{\eta=0\}$ and one has $f_2\in \mathbf{H}_{0,\alpha}^{(1-\alpha;\{Q_3,Q_4\})}(\dot{\Omega}_+)$.
Then, by employing Theorem \ref{sub}, there exists a solution $(\delta \theta^*,\delta {p}^*)$ to the linearized problem $\eqref{eq978}$-$\eqref{eq0234}$ with the boundary conditions $\eqref{eq240}$-$\eqref{eq867}$ and $\eqref{p^*}$. Furthermore, by Lemma \ref{lem52}, the existence of the solution $(\delta {U}^*; \delta {\psi^*}')$ to the linearized problem \eqref{eq978}-\eqref{eq11000} can be established.

\textbf{Step 2}: In this step, we will establish the estimate for the solution $(\delta {U}^* - \dot{U}_+ ; \delta {\psi^*}' - \dot{\psi}' )$.

Applying the definitions \eqref{iteration1}-\eqref{initial1}, one has
\begin{align}
  (\delta {U}^* - \dot{U}_+ ; \delta {\psi^*}' - \dot{\psi}' ) = \mathscr{T}(\mathscr{F} - \dot{\mathscr{F}};\mathscr{G} - \dot{\mathscr{G}}; \sigma \Theta^* - \sigma \dot{\Theta}; \sigma P_{\mr{e}}^* - \sigma \dot{P}_{\mr{e}} ),
\end{align}
where
\begin{align*}
  \mathscr{F}\defs& (f_1(\delta U, \delta \psi', \delta \xi_{*}),
  f_2(\delta U, \delta \psi', \delta \xi_{*}), f_3(\delta U)),\\
  \dot{\mathscr{F}}\defs& (0,0, B(\bar{U}_+)),\\
  \mathscr{G}\defs &(G_j^*(\delta U,\delta U_-, \delta \psi', \delta \xi_*); j=1,2,3,4),\\
  \dot{\mathscr{G}}\defs& (\dot{g}_1, \dot{g}_2, \dot{g}_3, \dot{g}_4).
\end{align*}
Similar as the proof of Lemma \ref{lem52}, one has
\begin{align}\label{eq013}
  &\| \delta U^*- \dot{U}_+ \|_{(\dot{\Omega}_+;\dot{\Gamma}_{\mr{s}})} + \|\delta {\psi^*}' -\dot{\psi}' \|_{1 ,\alpha;\dot{\Gamma}_{\mr{s}}}^{( -\alpha;Q_4)}\notag \\
  \leq & C \Big(\sum_{j=1}^2 \| f_j \|_{0,\alpha;\dot{\Omega}_+}^{(1-\alpha;\{Q_3,Q_4\})} + \sum_{j=1}^{4}\| g_j^* - \dot{g}_j\|_{1,\alpha;\dot{\Gamma}_{\mr{s}}}^{(-\alpha; Q_4)}
   +  \|f_3 +B(\bar{U}_+)\|_{0,\alpha;\dot{\Omega}_+}^{(1-\alpha;\{Q_3,Q_4\})}\notag \\
  &+\sigma\cdot \|  P_{\mr{e}}^* - \dot{P}_{\mr{e}} \|_{1,\alpha;{\Gamma}_3}^{(-\alpha;Q_3)}+ \sigma\cdot\|  \Theta^* -\dot{\Theta}\|_{1,\alpha;{\Gamma}_4}^{(-\alpha;\{Q_3,Q_4\})} \Big).
  \end{align}
 Now, we analyze the terms on the right hand side of \eqref{eq013}.
By the definition of $f_1$ and the estimate $\eqref{d}$ in Lemma \ref{lem61}, one has
 \begin{align}
&\|f_1(\delta U, \delta \psi', \delta \xi_{*})\|_{0,\alpha;\dot{\Omega}_+}^{(1-\alpha;\{Q_3,Q_4\})}\notag\\
\leq &\Big\|2 \Big(\bar{q}_+ - \displaystyle\frac{\eta}{r}  q\cos\theta\Big) \partial_\xi \delta \theta + \displaystyle\frac{2\eta}{r}\displaystyle\frac{\sin\theta}{\rho q}\partial_\xi \delta p\Big\|_{0,\alpha;\dot{\Omega}_+}^{(1-\alpha;\{Q_3,Q_4\})}\notag\\
 &+ \Big\|\frac{(L-{\xi})\cdot \delta \psi'({\eta})}{L- \psi ({\eta})} \partial_\xi \delta p\Big\|_{0,\alpha;\dot{\Omega}_+}^{(1-\alpha;\{Q_3,Q_4\})}\notag\\
&+ \Big\|\displaystyle\frac{2\eta}{r}\displaystyle\frac{\delta \xi_* - \int_{\eta}^1 \delta \psi'(\tau)\dif \tau}{L -\psi(\eta)}\Big(\displaystyle\frac{\sin\theta}{\rho q}\partial_\xi\delta p - q\cos\theta \partial_\xi \delta \theta\Big) \Big\|_{0,\alpha;\dot{\Omega}_+}^{(1-\alpha;\{Q_3,Q_4\})}\notag\\
\leq& C\Big(\|\delta U\|_{L^\infty(\dot{\Omega}_+)} \|(\delta \theta, \delta p)\|_{1,\alpha;\dot{\Omega}_+}^{(-\alpha;\{Q_3,Q_4\})} +
\|\delta \psi'\|_{L^\infty(\dot{\Gamma}_s)} \|\delta p\|_{1,\alpha;\dot{\Omega}_+}^{(-\alpha;\{Q_3,Q_4\})} \Big)\notag\\
&+ C(\dot{\xi}_*)\Big(|\delta \xi_*|\|(\delta \theta, \delta p)\|_{1,\alpha;\dot{\Omega}_+}^{(-\alpha;\{Q_3,Q_4\})} + \|\delta \psi'\|_{L^\infty(\dot{\Gamma}_s)}\|(\delta \theta, \delta p)\|_{1,\alpha;\dot{\Omega}_+}^{(-\alpha;\{Q_3,Q_4\})}
\Big)\notag\\
\leq & C \sigma^2,
 \end{align}
 where the constant $C$ depends on $\bar{U}_+$, $C_{\mr{s}}$ and $\dot{\xi}_*$.

 Similarly, one has
\begin{equation}\label{eq293}
 \|f_2(\delta U, \delta \psi', \delta \xi_{*})\|_{0,\alpha;\dot{\Omega}_+}^{(1-\alpha;\{Q_3,Q_4\})}\leq C \sigma^2.
\end{equation}

Moreover, since
\begin{equation}
  \begin{aligned}
    f_3(\delta U) + B(\bar{U}_+) & = -\left(B(U) - B(\bar{U}_+) - (\bar{q}_+ \delta q + \displaystyle\frac{1}{\bar{\rho}_+}\delta p +\bar{T}_+\delta s)  \right)\\
    & = - \int_{0}^{1} D_{U}^2 B(\bar{U}_+ + t\delta U)\dif t \cdot (\delta U)^2,
  \end{aligned}
\end{equation}
one can obtain that
\begin{equation}
  \|f_3 +B(\bar{U}_+)\|_{0,\alpha;\dot{\Omega}_+}^{(1-\alpha;\{Q_3,Q_4\})}
  \leq C\sigma^2.
\end{equation}

For the boundary conditions, on $\Gamma_4$, one has
\begin{equation}\label{eq296}
\begin{aligned}
\Theta^* - \dot{\Theta} &=  \Theta \Big(\xi + \frac{L - \xi}{L - \dot{\xi}_*} \delta \xi_* \Big) - \Theta(\xi)\\
& = \int_{0}^{1} \Theta'\Big(\xi + s \frac{L - \xi}{L - \dot{\xi}_*} \delta \xi_*\Big) \dif s \cdot \frac{L - \xi}{L - \dot{\xi}_*} \delta \xi_*.
\end{aligned}
\end{equation}
By \eqref{d}, it follows that
\begin{equation}\label{eq297}
\|\Theta^* - \dot{\Theta} \|_{1,\alpha;{\Gamma}_4}^{(-\alpha;\{Q_3,Q_4\})} \leq C  \| \Theta'\|_{\mcc^{1,\alpha}(\Gamma_4)} \cdot |\delta \xi_*| \leq C\cdot C_{\mr{s}}\sigma \leq  C\sigma.
\end{equation}
On the exit $\Gamma_3$, by recalling \eqref{eq0103}, it holds that
\begin{equation}
  P_{\mr{e}}^* = {P}_{\mr{e}}(\eta)+ O(1)\sigma,
\end{equation}
thus,
\begin{equation}
   \|P_{\mr{e}}^* -\dot{P}_{\mr{e}} \|_{1,\alpha;{\Gamma}_3}^{(-\alpha;Q_3)}\leq  C\sigma.
\end{equation}
Finally, on the fixed boundary $\dot{\Gamma}_{\mr{s}}$, employing \eqref{eq0105}, \eqref{d} and \eqref{g2}, for $j=1,2,3$, one has
\begin{align}\label{eq535}
      \|g_j^* - \dot{g}_j\|_{1,\alpha;\dot{\Gamma}_{\mr{s}}}^{(-\alpha;Q_4)}
      \leq C\|\partial_\xi \dot{U}_-\|_{{\mcc}^{1,\alpha}(\bar{\Omega})}\cdot|\delta\xi_*|+ O(1) \sigma^2
       \leq C \sigma^2.
  \end{align}
A similar argument yields that
\begin{align}
  \|g_4^* - \dot{g}_4\|_{1,\alpha;\dot{\Gamma}_{\mr{s}}}^{(-\alpha;Q_4)}\leq C\sigma^2.
\end{align}

Therefore, for sufficiently small $\sigma$, \eqref{eq013} implies that
\begin{equation}\label{xll}
   \|\delta U^*- \dot{U}_+ \|_{(\dot{\Omega}_+;\dot{\Gamma}_{\mr{s}})} + \|\delta {\psi^*}' -\dot{\psi}' \|_{1 ,\alpha;\dot{\Gamma}_{\mr{s}}}^{( -\alpha;Q_4)} \leq C\sigma^2 \leq \frac12 \sigma^{\frac32}.
\end{equation}

\textbf{Step 3}: Finally, it remains to show that the conditions in the space $\mathscr{N}(\dot{U}_+; \dot{\psi}')$ hold for $(\delta {U}^* - \dot{U}_+ ; \delta {\psi^*}' - \dot{\psi}' )$. By Remark \ref{axis}, it suffices to show that the condition \eqref{eq240} holds.

According to Remark \ref{opp}, one has $\delta \theta^*(\xi,0) = \partial_\eta \delta p^* (\xi,0) = 0$.
Moreover, differentiating both sides of equation \eqref{eq0234} with respect to $\eta$, applying the conditions $\delta \theta^*(\xi,0) = \partial_\eta \delta p^*(\xi,0) =0$, one has
 \begin{align}
  \frac12 \partial_\eta^2 \delta \theta^* (\xi,0)= \partial_\eta f_2(\xi,0).
 \end{align}
 Then by employing the conditions on the axis $\Gamma_2$ for $\delta U$, direct calculation shows that $\partial_\eta f_2(\xi,0) = 0$. Thus, one has $\partial_\eta^2 \delta {\theta}^*(\xi,0) = 0$.
Moreover, by the expression of $\delta {\psi^*}'$ in \eqref{eq870}, it is easy to check that $\delta {\psi^*}'(0) = 0$.
Finally, by \eqref{eq251}, it can be easily verified that $\partial_\eta (\delta p^*,\delta q^*, \delta s^*)(\dot{\xi}_*, 0) = 0$. Then differentiating both sides of equation \eqref{eq980} with respect to $\eta$, one has
\begin{equation}
  \partial_\xi(\partial_\eta\delta s^*) = 0.
\end{equation}
 Therefore, $\partial_\eta\delta s^*(\xi,0) = 0$. Differentiating both sides of equation \eqref{eq979} with respect to $\eta$, one can also obtain $\partial_\eta \delta q^*(\xi,0) = 0$ directly.
Therefore, the condition \eqref{eq240} holds.

Thus, we complete the proof of the Lemma \ref{lem62}.

\end{proof}

The Theorem \ref{thm26} will be proved as long as that the mapping $\mathbf{\Pi}$ can be proved to be contractive in $\textsl{N}(\frac12 \sigma^{\frac32})$, which will be done in the following lemma.

\begin{lem}\label{lem63}
    There exists a positive constant $\sigma_3$ with $0<\sigma_3 \ll 1$, such that for any $0< \sigma \leq \sigma_3$, the mapping $\mathbf{\Pi}$ is contractive.
  \end{lem}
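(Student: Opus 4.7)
The plan is to mimic the structure of the proof of Lemma \ref{lem62}: take two elements $(\delta U^{(k)};\delta\psi'^{(k)})\in\mathscr{N}(\dot U_+;\dot\psi')$ for $k=1,2$, let $\delta\xi_*^{(k)}$ be the corresponding shock positions supplied by Lemma \ref{lem61}, and $(\delta U^{*(k)};\delta{\psi^*}'^{(k)})$ the outputs under $\mathbf{\Pi}$. The goal is to prove that for $\sigma$ sufficiently small,
\begin{equation*}
\|\delta U^{*(1)}-\delta U^{*(2)}\|_{(\dot\Omega_+;\dot\Gamma_{\mr s})}+\|\delta{\psi^*}'^{(1)}-\delta{\psi^*}'^{(2)}\|_{1,\alpha;\dot\Gamma_{\mr s}}^{(-\alpha;Q_4)}\leq \frac12\,\mathcal D,
\end{equation*}
where $\mathcal D:=\|\delta U^{(1)}-\delta U^{(2)}\|_{(\dot\Omega_+;\dot\Gamma_{\mr s})}+\|\delta\psi'^{(1)}-\delta\psi'^{(2)}\|_{1,\alpha;\dot\Gamma_{\mr s}}^{(-\alpha;Q_4)}$.

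The first step controls $|\delta\xi_*^{(1)}-\delta\xi_*^{(2)}|$. Each $\delta\xi_*^{(k)}$ is characterised by $I(\delta\xi_*^{(k)},\delta U^{(k)},\delta\psi'^{(k)},\dot U_-)=0$. From the expansion \eqref{kxx} produced in the proof of Lemma \ref{lem61}, the $\delta\xi_*$-derivative of $I$ at the base point is $-\sigma\dot k\,\Theta(\dot\xi_*)+O(\sigma^{3/2})$, which is of exact order $\sigma$ under the standing hypothesis $\Theta(\dot\xi_*)\neq 0$. Subtracting the two solvability identities and inspecting each term of $I$, every explicit contribution of $(\delta U^{(1)}-\delta U^{(2)};\delta\psi'^{(1)}-\delta\psi'^{(2)})$ carries an extra factor of $\sigma$ (either from $\sigma P_{\mr e}$, $\sigma\Theta$, or from another background-perturbation factor which itself is $O(\sigma)$ in $\mathscr{N}(\dot U_+;\dot\psi')$). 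Dividing by the $\sigma$-sized derivative then yields
\begin{equation*}
|\delta\xi_*^{(1)}-\delta\xi_*^{(2)}|\leq C\sigma\,\mathcal D.
\end{equation*}

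The second step exploits linearity. Because the principal part in \eqref{eq978}--\eqref{eq11000} has background coefficients, the solution operator $\mathscr{T}$ is linear in its data, and Lemma \ref{lem52} gives
\begin{align*}
&\|\delta U^{*(1)}-\delta U^{*(2)}\|_{(\dot\Omega_+;\dot\Gamma_{\mr s})}+\|\delta{\psi^*}'^{(1)}-\delta{\psi^*}'^{(2)}\|_{1,\alpha;\dot\Gamma_{\mr s}}^{(-\alpha;Q_4)}\\
\leq\,& C\Bigl(\sum_{j=1}^{2}\|f_j^{(1)}-f_j^{(2)}\|_{0,\alpha;\dot\Omega_+}^{(1-\alpha;\{Q_3,Q_4\})}+\|f_3^{(1)}-f_3^{(2)}\|_{0,\alpha;\dot\Omega_+}^{(1-\alpha;\{Q_3,Q_4\})}\\
&\qquad+\sum_{j=1}^{4}\|g_j^{*(1)}-g_j^{*(2)}\|_{1,\alpha;\dot\Gamma_{\mr s}}^{(-\alpha;Q_4)}+\sigma\|\Theta^{*(1)}-\Theta^{*(2)}\|_{1,\alpha;\Gamma_4}^{(-\alpha;\{Q_3,Q_4\})}+\sigma\|P_{\mr e}^{*(1)}-P_{\mr e}^{*(2)}\|_{1,\alpha;\Gamma_3}^{(-\alpha;Q_3)}\Bigr).
\end{align*}
The source terms $f_i$, the boundary data $\Theta^*$, $P_{\mr e}^*$ and the nonlinear R--H residuals $g_j^*$ are smooth functions of $(\delta U,\delta\psi',\delta\xi_*)$ which vanish to second order at the background. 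Writing each difference as a single integral of a gradient along a segment joining the two inputs and using the $L^\infty$--smallness $\|(\delta U^{(k)};\delta\psi'^{(k)})\|\leq C\sigma$ from $\mathscr{N}(\dot U_+;\dot\psi')$, each piece on the right-hand side is bounded by $C\sigma\,\mathcal D+C|\delta\xi_*^{(1)}-\delta\xi_*^{(2)}|$. Combining with the bound of the first step gives a total bound of $C\sigma\,\mathcal D$, and choosing $\sigma_3$ so small that $C\sigma_3\leq\tfrac12$ produces the claimed contraction.

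The main obstacle will be carrying out the bilinear difference estimates for $\mathscr F$ and $\mathscr G$ uniformly in the weighted H\"older norms. The singular factors $\eta/r$ and $\delta\theta/\eta$ appearing in $f_1,f_2$ near the symmetry axis, together with the axis compatibility conditions built into $\mathscr{N}(\dot U_+;\dot\psi')$, require the same delicate bookkeeping as in Lemma \ref{lem62}: one must check that the difference $f_1^{(1)}-f_1^{(2)}$ still vanishes on $\Gamma_2$ (so that Theorem \ref{sub} remains applicable) and that the apparently singular term $\delta\theta/\eta-(2\eta/r^2)\sin\theta/(\rho q)$ only contributes an $O(\sigma)\,\mathcal D$ quantity because its leading expansion is quadratic in $\delta\theta$. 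Once these axis-compatible difference estimates are established, the scheme of the argument is routine, and the contractiveness of $\mathbf{\Pi}$ follows.
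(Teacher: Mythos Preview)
Your approach mirrors the paper's proof: first control $|\delta\xi_*^{(1)}-\delta\xi_*^{(2)}|$ via the solvability identity $I=0$, then use the linearity of $\mathscr T$ together with difference estimates on $\mathscr F,\mathscr G,\Theta^*,P_{\mr e}^*$ modelled on Lemma~\ref{lem62}. That is exactly what the paper does.

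There is, however, a miscount in your $\sigma$-bookkeeping that you should correct. You argue that the $(\delta U,\delta\psi')$-variation of $I$ carries a factor $\sigma$ and the $\delta\xi_*$-derivative of $I$ is of order $\sigma$; dividing these gives only
\[
|\delta\xi_*^{(1)}-\delta\xi_*^{(2)}|\le C\,\mathcal D,
\]
not $C\sigma\,\mathcal D$ as you claim. This is precisely the bound the paper obtains in \eqref{eq306}. The contraction is \emph{not} lost: the point you understate in the second step is that every occurrence of $\delta\xi_*$ in $f_1,f_2,g_j^*,\Theta^*$ is multiplied by a factor that is itself $O(\sigma)$ (e.g.\ $\partial_\xi\delta p$, $\partial_\xi\dot U_-$, or the explicit $\sigma$ in $\sigma\Theta^*$), so the $\delta\xi_*$-contribution to the source differences is $C\sigma\,|\delta\xi_*^{(1)}-\delta\xi_*^{(2)}|\le C\sigma\,\mathcal D$, not $C\,|\delta\xi_*^{(1)}-\delta\xi_*^{(2)}|$. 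Your two errors cancel, but the intermediate statements as written are inaccurate; with this adjustment your outline matches the paper's argument.
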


  \begin{proof}

Suppose that $(\delta U_k; \delta \psi_k^{'})\in \mathscr{N}(\dot{U}_+; \dot{\psi}')$, $(k=1,2)$, then by Lemma \ref{lem61} and Lemma \ref{lem62}, there exists $\delta \xi_{*k}$ satisfying the estimate \eqref{d} and $(\delta {U}_k^* ; \delta \psi_k^{*'})\in \mathscr{N}(\dot{U}_+; \dot{\psi}')$ such that
\begin{align}\label{iteration2}
  (\delta {U}_k^*; \delta {\psi_k^*}';\delta \xi_{*k}) = \mathscr{T}_e(\mathscr{F}_k; \mathscr{G}_k; \sigma \Theta^*(\xi; \delta \xi_{*k}); \sigma P_{\mr{e}}^*(\eta; \delta U_k)),
\end{align}
where
\begin{align*}
  \mathscr{F}_k\defs& (f_1(\delta U_k, \delta \psi_k^{'}, \delta \xi_{*k}), f_2(\delta U_k, \delta \psi_k^{'}, \delta \xi_{*k}), f_3(\delta U_k)),\\
  \mathscr{G}_k\defs& (G_j^*(\delta U_k,\delta U_-,  \delta \psi_k^{'}, \delta \xi_{*k}); j=1,2,3,4).
\end{align*}
  To prove the mapping $\mathbf{\Pi}$ is contractive, it suffices to show that, for sufficiently small $\sigma>0$, it holds that
  \begin{align}\label{ee}
     &\| \delta U_2^* - \delta U_1^* \|_{(\dot{\Omega}_+;\dot{\Gamma}_{\mr{s}})} + \| \delta \psi_{2}^{*'} - \delta \psi_{1}^{*'}\|_{1,\alpha;\dot{\Gamma}_{\mr{s}}}^{(-\alpha;Q_4)}
      \notag \\
     \leq& \frac12\Big(\| \delta U_2 - \delta U_1\|_{(\dot{\Omega}_+;\dot{\Gamma}_{\mr{s}})} + \| \delta {\psi_2}' - \delta {\psi_1}'\|_{1,\alpha;\dot{\Gamma}_{\mr{s}}}^{(-\alpha;Q_4)}\Big).
   \end{align}
By \eqref{iteration2}, one has
\begin{align}
  &(\delta U_2^* - \delta U_1^* ; \delta \psi_{2}^{*'} - \delta \psi_{1}^{*'})\label{iteration3}\\
  =&  \mathscr{T}_e(\mathscr{F}_2 - \mathscr{F}_1; \mathscr{G}_2 -\mathscr{G}_1 ; \sigma \Theta^*(\xi; \delta \xi_{*2}) -\sigma \Theta^*(\xi; \delta \xi_{*1}) ; \sigma P_{\mr{e}}^*(\eta; \delta U_2) - \sigma P_{\mr{e}}^*(\eta; \delta U_1))\notag.
\end{align}
Since the right hand side of \eqref{iteration3} includes $\delta \xi_{*k}$, which is determined by Lemma \ref{lem61} with given $(\delta U_k^*; \delta \psi_{k}^{*'})$, one has to estimate $|\delta \xi_{*2} - \delta \xi_{*1}|$ first.

According to the definition $I$ in $\eqref{eq249}$, it follows that
\begin{align}\label{eq304}
 0 =& I(\delta \xi_{*2},  \delta {U}_2,\delta {\psi_2}', \delta U_-) - I(\delta \xi_{*1},  \delta {U}_1,\delta {\psi_1}', \delta U_-)
  \notag \\
 = &I(\delta \xi_{*2}, \delta U_2 , \delta {\psi_2}' , \delta U_-) - I(\delta \xi_{*1}, \delta U_2 , \delta {\psi_2}' , \delta U_-)
  \notag \\
& + I(\delta \xi_{*1}, \delta U_2 , \delta {\psi_2}' , \delta U_-) -  I(\delta \xi_{*1}, \delta U_1 , \delta {\psi_1}', \delta U_-)
 \notag \\
= &\int_{0}^{1} \frac{\partial I }{\partial (\delta \xi_*)} (\delta \xi_{*t}, \delta U_2,\delta {\psi_2}', \delta U_-)\dif t \cdot (\delta \xi_{*2} - \delta \xi_{*1})
 \notag \\
& + \int_{0}^{1} \nabla_{(\delta U, \delta \psi')} I (\delta \xi_{*1} , \delta U_t, \delta {\psi_t}' , \delta U_-)\dif t \cdot (\delta U_2 - \delta U_1, \delta {\psi_2}' - \delta {\psi_1}' ),
\end{align}
where
\begin{align}
  &\delta \xi_{*t} : = t \delta \xi_{*2} + (1 - t) \delta \xi_{*1},\quad
    \delta U_t : = t \delta U_2 + (1 - t) \delta U_1,
    \notag \\
    & \delta {\psi_t}' : = t \delta {\psi_2}' + (1 - t)\delta {\psi_1}'.
\end{align}
Similar calculations as in Lemma \ref{lem61}, one has
\begin{align}\label{eq305}
&\frac{\partial I }{\partial (\delta \xi_*)} (\delta \xi_{*t}, \delta U_2,\delta {\psi_2}', \delta U_-)
 \notag \\
 = &\frac{\partial I }{\partial (\delta \xi_*)}(0 , 0, 0; \dot{U}_-) + \int_{0}^{1} \nabla_{(\delta \xi_*, \delta U, \delta \psi' , \delta U_-)} \frac{\partial I}{\partial (\delta \xi_*)}(s\delta \xi_{*t}, s\delta U_{2}, s\delta {\psi_{2}}', s\delta U_{-})\dif s
  \notag \\
&\cdot (\delta \xi_{*t}, \delta U_2  , \delta {\psi_2}', \delta U_- - \dot{U}_-)
 \notag \\
 =& - \sigma \dot{k} \Theta(\dot{\xi}_*) + O(1) \sigma^{\frac32} + O(1) \sigma^2.
\end{align}
Moreover,
\begin{align}
  \nabla_{(\delta U, \delta \psi')} I (\delta \xi_{*1} , \delta U_t, \delta {\psi_t}' , \delta U_-) = O(1)\sigma,
\end{align}
where $O(1)$ depends on $\dot{k} \Theta (\dot{\xi}_*) $, $\dot{\xi}_*$, $\bar{U}_\pm$, $L$, $P_{\mr{e}}$ ,$\Theta$ and $\alpha$.

Therefore, \eqref{eq304} yields that
\begin{equation}\label{eq306}
\begin{aligned}
|\delta \xi_{*2} - \delta \xi_{*1}| & \leq \Big|\frac{O(1) \sigma \Big(\| \delta U_2 - \delta U_1\|_{1,\alpha;\dot{\Gamma}_{\mr{s}}}^{(-\alpha;Q_4)} + \| \delta {\psi_2}' - \delta {\psi_1}'\|_{1,\alpha;\dot{\Gamma}_{\mr{s}}}^{(-\alpha;Q_4)}\Big)} { - \sigma \dot{k} \Theta (\dot{\xi}_*) + O(1) \sigma^{\frac32} + O(1) \sigma^2}\Big|\\
&\leq C \Big(\| \delta U_2 - \delta U_1\|_{1,\alpha;\dot{\Gamma}_{\mr{s}}}^{(-\alpha;Q_4)} + \|\delta {\psi_2}' - \delta {\psi_1}'\|_{1,\alpha;\dot{\Gamma}_{\mr{s}}}^{(-\alpha;Q_4)}\Big),
\end{aligned}
\end{equation}
where the constant $C$ depends on $\dot{k} \Theta (\dot{\xi}_*) $, $\dot{\xi}_*$, $\bar{U}_\pm$, $L$, $P_{\mr{e}}$ ,$\Theta$ and $\alpha$.

Finally, employing the estimate \eqref{eq306}, by similar computations as in Lemma \ref{lem62},  one has that the estimate \eqref{ee} holds for sufficiently small $\sigma$, which completes the proof.

\end{proof}

\appendix

\section*{Acknowlegements}

The research of Beixiang Fang was supported in part by Natural Science
Foundation of China under Grant Nos. 11971308, 11631008 and 11371250,
the Shanghai Committee of Science and Technology (Grant No. 15XD1502300). The research
of Xin Gao was supported in part by China Scholarship Council (No.201906230072).

\end{document}